\numberwithin{equation}{section}
\newtheorem{de}{Definition}[section]
\newtheorem{thm}{Theorem}[section]
\newtheorem{rem}[thm]{Remark}
\newtheorem{cor}[thm]{Corollary}
\newtheorem{prop}[thm]{Proposition}
\newtheorem{lem}[thm]{Lemma}
\renewcommand{\dim}{\noindent\textbf{Proof.} }
\newcommand{\finedim}{{\unskip\nobreak\hfil\penalty50
   \hskip2em\hbox{}\nobreak\hfil\mbox{$\Box$ \qquad}
   \parfillskip=0pt \finalhyphendemerits=0\par\medskip}}
\newcommand{\R}{\mathbb{R}}
\newcommand{\Z}{\mathbb{Z}}
\newcommand{\Om}{\Omega}
\newcommand{\al}{\alpha}
\newcommand{\xs}{\overline{x}}
\newcommand{\ts}{\overline{t}}
\newcommand{\p}{\partial}
\renewcommand{\theta}{\vartheta}
\renewcommand{\epsilon}{\varepsilon}
\newcommand{\ep}{\epsilon}
\newcommand{\z}{\zeta}
\newcommand{\I}{\mathcal{I}_1}
\newcommand{\us}{\overline{u}}
\renewcommand{\leq}{\leqslant}
\renewcommand{\le}{\leqslant}
\renewcommand{\geq}{\geqslant}
\renewcommand{\ge}{\geqslant}
\newcommand{\beq}{\begin{equation}}
\newcommand{\eeq}{\end{equation}}
\newcommand{\beqs}{\begin{equation*}}
\newcommand{\eeqs}{\end{equation*}}
\newcommand{\beqa}{\begin{eqnarray}}
\newcommand{\eeqa}{\end{eqnarray}}
\newcommand{\beqas}{\begin{eqnarray*}}
\newcommand{\eeqas}{\end{eqnarray*}}
\title[]{Derivation of the  1-D Groma-Balogh equations from the Peierls-Nabarro model}
\author{Stefania Patrizi and Tharathep Sangsawang}
\thanks{The first  author has been supported by the
NSF Grant DMS-2155156 " Nonlinear PDE methods in the study of interphases"}
\address[Stefania Patrizi and Tharathep Sangsawang]{
Department of Mathematics,
University of Texas at Austin,
2515 Speedway Stop C1200,
Austin, Texas 78712-1202, USA}
\email{spatrizi@math.utexas.edu} 
\email{tsangsaw@math.utexas.edu}
\subjclass[2010]{82D25, 35R09, 74E15, 35R11, 47G20.}
\keywords{Peierls-Nabarro model, nonlocal integro-differential equations,
dislocation dynamics, fractional Allen Cahn, phase tranditions.}
\begin{document}

\begin{abstract}
We consider a semi-linear integro-differential equation  in dimension one associated to the half Laplacian
whose solution represents the atom dislocation in a crystal.
The equation  comprises the evolutive version of the classical
Peierls-Nabarro model. 
We show that for  a large number of dislocations, the solution, properly rescaled, converges to the  solution of a fully nonlinear 
integro-differential equation 
which is a model for the macroscopic 
 crystal plasticity  with density of dislocations.   This leads to the formal derivation of the 1-D  Groma-Balogh equations \cite{groma},   
a  popular model 
describing the evolution of the density of positive and negative oriented parallel straight dislocation lines. 
This paper completes the work of \cite{patsan}. The main novelty here is that we allow dislocations to have different orientation and so we have to deal with collisions of them. 
\end{abstract}

\maketitle
\section{Introduction}
The goal of this paper is to complete the study started by the authors in \cite{patsan} of  the behavior as $(\ep,\delta)\to(0,0)$ of the solution $u^{\ep,\delta}$ of the following  evolutionary partial-integro-differential equation
\beq\label{uepeq}\begin{cases}
\delta\partial_t u^{\ep,\delta}=\I [u^{\ep,\delta}]-\displaystyle\frac{1}{\delta}W'\left(\frac{u^{\ep,\delta}}{\ep}\right)&\text{in }(0,+\infty)\times\R\\
u^{\ep,\delta}(0,\cdot)=u_0(\cdot)&\text{on }\R
\end{cases}\eeq
where $\ep,\,\delta>0$ are small scale parameters, $W$ is a multi-well periodic potential and we denote by  $\I$ the so-called fractional Laplacian 
 of order 1, $-(-\Delta)^\frac{1}{2}$, 
defined by 
$$ \I[v](x)=\frac{1}{\pi}PV
\displaystyle\int_{\R}\displaystyle\frac{v(y)-v(x)}{(y-x)^{2}}dy,$$ where PV stands for principal value. 
We refer to~\cite{s} or~\cite{dnpv} for a basic introduction
to the fractional Laplace operator.

Equation \eqref{uepeq} with $\ep=\delta=1$ arises in the Peierls-Nabarro model to describe at microscopic scale the motion of dislocation lines in crystals.
Dislocations are line defects in crystalline materials whose motion is responsible of the plastic  behavior of metals. Dislocations  can be described at several scales by different models:
\begin{itemize}
\item[a)] atomic scale (Frenkel-Kontorova model), 
\item [b)]microscopic scale (Peierls-Nabarro model), 
\item [c)] mesoscopic scale (Discrete dislocation dynamics), 
\item[d)] macroscopic scale (Elasto-visco-plasticity with density of dislocations). 
\end{itemize}
We refer the reader to the book \cite{hl} for a tour in the theory of dislocations. 
The 1-D Peierls-Nabarro model describes 
the  microscopic effect of an {\em ensemble of straight edge dislocation lines all lying in the same plane}. 
After a cross section, dislocation lines can be identified by points on a line. 
Every  dislocation  is associated to either a {\em positive} or a {\em negative orientation}, depending on the direction of the Burgers' vector (a fixed vector associated to the dislocation).  
Equation \eqref{uepeq} with $\ep=1$, which  is obtained after a parabolic rescaling of the original model, has been investigated in a series of papers \cite{gonzalezmonneau,dpv, dfv, pv2,pv4,pv3}.
The solution here, $u^{1,\delta}$,  is a phase transition function which represents   the atom displacement,
in terms of~$\delta$, which in turn
represents the size of the crystal scale. 
Starting from an initial configuration where the transitions occurs at some given points,  for small~$\delta$, the displacement  function
approaches a piecewise constant function. 
The plateaus of this asymptotic limit correspond to
the periodic sites induced by the crystalline structure,
but its jump points evolve in time, according to a singular potential. Roughly speaking, one can
imagine that the discontinuity points of this
limit displacement function behave like a ``particle'' system
(though no ``material'' particle is really involved), 
driven by a system of ordinary differential equations
which describe the position of the jump points~$y_1(t),\dots,y_N(t)$.
The system corresponds to the  discrete dislocation dynamics  and the convergence result is a passage from (b) to  (c). 
The physical properties of the singular potential of this ODE system depend on the orientation of the  displacement function at the jump points. Namely, if the displacement function has the same spatial monotonicity at $y_i$ and $y_{i+1}$ (i.e., $y_i$ and $y_{i+1}$ have the same orientation), then the potential induces a repulsion between the particles $y_i$ and $y_{i+1}$. Conversely, when the displacement  function has opposite spatial monotonicity at $y_i$ and $y_{i+1}$ (i.e., $y_i$ and $y_{i+1}$ have opposite orientation), then the potential becomes attractive, and the two particles may collide in a finite time.
We will give more details in Section \ref{PNsec}. Collisions create a problem in the analysis  as the dynamical system that governs the motion of the dislocation particles   ceases to be well-defined at the collision time. 
The study of the asymptotics of the  displacement function after collision time permits to understand how the dynamical law of the interphase points can be continued/extended after collisions, see \cite{pv3}. 

Different space/time scales of the original Peierls-Nabarro model  also produce homogenization results, see \cite{mp, mp2,pv}.  

  The model  can also be linked
to the classical model at the atomic scale which was introduced by 
Frenkel and Kontorova (see~\cite{fino}) (from (a) to (b)).

We refer to \cite{cddp,fim, gm, gmps, glp,mm,mmp,mora,sppg} for further related results.

\medskip
In \cite{patsan} the authors considered for the first time in this framework  the case in which the number of dislocations $N$ goes to $\infty$. We introduced a second parameter $\ep$ such that 
$N=N_\ep\to \infty $ as $\ep\to 0$. A parabolic rescaling in $\delta$ and hyperbolic rescaling in $\ep$ of the original model  leads to \eqref{uepeq}.  In \cite{patsan} we only considered the case when the dislocation points have all the same orientation, which in the model corresponds to assuming  the initial condition $u_0$ to be monotonic.
In the present paper, we remove the monotonicity assumption on $u_0$, {\em  allowing dislocations to have different orientation}.  More precisely,
on  $u_0$ we assume
\begin{equation}\label{u^0ass}
\begin{cases}
u_0\in C^{1,1}(\mathbb{R}),\\
\lim \limits_{x\to-\infty}u_0(x)=0,\\
\lim \limits_{x\to+\infty}u_0(x)=l, \text{ for some }l\in\R.\\
\end{cases}
\end{equation}
For fixed $\ep$, the dislocation  points at initial time are approximated by the points in the level sets $\{u_0=\ep i\}$, $i\in \Z$, while their  orientations are determined by the monotonicity of $u_0$ at the points. 
The limits in \eqref{u^0ass} guarantee that the dislocation points remain in a compact set for fixed $\ep$. The first limit is just a normalization, 0 could be replaced by any real number.


On the potential $W$ we assume
\begin{equation}\label{Wass}
\begin{cases}W\in C^{2,\beta}(\R)& \text{for some }0<\beta<1\\
W(u+1)=W(u)& \text{for any } u\in\R\\
W=0& \text{on }\Z\\
W>0 & \text{on }\R\setminus\Z\\
W''(0)>0.\\
\end{cases}
\end{equation}

 Our goal in this paper is to understand the large scale limit of the Peierls-Nabarro model  for  a  {\em large number of
  parallel straight edge dislocation lines lying in the same slip plane, with possibly different orientation,  moving with self-interactions}.
We perform a {\em direct passage from the model (b) to the model  (d) } and show that at macroscopic scale the density of dislocations is governed by the following evolution law:
\beq
\label{ubareq}\begin{cases}
\partial_t u=c_0|\partial_x u|\,\I[ u]&\text{in }(0,+\infty)\times\R\\
u(0,\cdot)=u_0&\text{on }\R
\end{cases}\eeq
where  $c_0>0$ is defined in the forthcoming \eqref{c0}.  Our main result is the following:
\begin{thm}\label{mainthm}
Assume  \eqref{u^0ass} and \eqref{Wass}.   Let $u^{\ep,\delta}$ be the solution of~\eqref{uepeq}.
There exists a number $A_\ep>0$ depending on $\ep$ and $u_0$ such that if $\delta A_\ep\to 0$ as $(\ep,\delta)\to(0,0)$, then $u^{\ep,\delta}$ converges locally uniformly in $[0,+\infty)\times\R$ to the viscosity solution $\us$ of~\eqref{ubareq}, as $\ep\to0$. 
\end{thm}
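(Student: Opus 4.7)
The strategy is a perturbed test function argument in the viscosity solution framework for integro-differential equations. Let $u^+(t,x)$ and $u^-(t,x)$ denote the upper and lower half-relaxed limits of $u^{\ep,\delta}$ as $(\ep,\delta)\to(0,0)$ and $(s,y)\to(t,x)$, and target showing that $u^+$ is a viscosity subsolution and $u^-$ a viscosity supersolution of \eqref{ubareq}, together with the initial compatibility $u^+(0,\cdot)\le u_0 \le u^-(0,\cdot)$. A comparison principle for \eqref{ubareq}, viewed as a nonlocal Hamilton-Jacobi-type equation with Hamiltonian $F(p,q)=c_0|p|q$ (with $q=\I[u]$ treated as a lower-order nonlocal term), then forces $u^+=u^-=\us$ and gives the claimed locally uniform convergence.

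For the subsolution property I would argue by contradiction. Assume $u^+-\phi$ has a strict local maximum at $(t_0,x_0)$ for some smooth $\phi$ with
$$\phi_t(t_0,x_0) > c_0\,|\phi_x(t_0,x_0)|\,\I[\phi(t_0,\cdot)](x_0) + \eta,\qquad \eta>0.$$
The plan is to construct a microscopic supersolution of \eqref{uepeq} of the form
$$\Phi^{\ep,\delta}(t,x) = \sum_{i\in\Z}\ep\,\sigma_i\,\phi_0\!\left(\frac{x-x_i^{\ep,\delta}(t)}{\delta}\right) + \text{(smooth macroscopic correction)},$$
where $\phi_0$ is the basic Peierls-Nabarro layer (the unique increasing solution of $\I[\phi_0]=W'(\phi_0)$ with limits $0$ and $1$ at $\mp\infty$ and algebraic tail of the form $H(y)-c/y+O(y^{-2})$), the signs $\sigma_i\in\{\pm 1\}$ encode the local monotonicity of $\phi$ near $(t_0,x_0)$, and the microscopic positions $x_i^{\ep,\delta}(t)$ evolve under the discrete dislocation-dynamics ODE system forced by $\phi$ whose coarse-grained law is precisely $c_0|\phi_x|\,\I[\phi]$. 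The strict inequality on $\phi$ leaves room for $\Phi^{\ep,\delta}$ to be a supersolution of \eqref{uepeq} in a parabolic neighborhood of $(t_0,x_0)$, while the strict maximum assumption arranges $\Phi^{\ep,\delta}\ge u^{\ep,\delta}$ on the parabolic boundary. Comparison for \eqref{uepeq} then contradicts $(t_0,x_0)$ being a maximum of $u^+-\phi$. The supersolution property of $u^-$ is symmetric.

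The main obstacle, and the central novelty over \cite{patsan}, is the collision between oppositely oriented dislocations, which is unavoidable since $u_0$ is not assumed monotone. Near a collision the ODE system for the $x_i^{\ep,\delta}$ becomes singular and the layer ansatz degenerates. I would resolve this by combining two ingredients: (i) an a priori bound $A_\ep$ on the total number of collisions in any bounded time interval, extracted from the initial datum via a signed count of the level points $\{u_0=\ep i\}$, so that the assumption $\delta A_\ep\to 0$ makes the cumulative collision time asymptotically negligible on the macroscopic scale; and (ii) a post-collision analysis in the spirit of \cite{pv3}, showing that a pair of opposite dislocations annihilates on a microscopic time $O(\delta\log(1/\delta))$, after which the layer ansatz recovers its validity with one fewer pair. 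Iterating, the barrier construction on any collision-free subinterval reduces essentially to the monotone case of \cite{patsan}, and the estimates are glued past each collision. Initial-time compatibility follows from explicit layered barriers at $t=0$ centered at $\{u_0=\ep i\}$, and the comparison principle for \eqref{ubareq} fits within the Barles-Imbert nonlocal framework, completing the proof.
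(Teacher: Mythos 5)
Your high-level framework (half-relaxed limits, perturbed test functions with Peierls–Nabarro layer ansatz, comparison principles on both scales) matches the paper, but the way you propose to handle collisions is not what the paper does, and your interpretation of $A_\ep$ is wrong.

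In the paper, $A_\ep = \ep K_\ep$ where $[-K_\ep,K_\ep]$ is the compact set containing the $\ep$-level points of $u_0$; the hypothesis $\delta A_\ep \to 0$ is simply a way of guaranteeing $\ep^2 N^0_\ep\,\delta = o_\ep(1)$, which is what the global approximation formula for $v$ by a sum of layers (Proposition~\ref{approxpropfinal}) requires. It has nothing to do with counting collision events. More importantly, the paper never carries out the post-collision microscopic analysis you propose (pairs annihilating in time $O(\delta\log(1/\delta))$, gluing barrier estimates across singular times). That program, adapting \cite{pv3} with $N_\ep\to\infty$ pairs, would be genuinely hard and you give no concrete mechanism for controlling the cumulative error across an $\ep$-dependent number of collisions. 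Instead, the paper sidesteps the microscopic collision dynamics entirely. At a point where $\partial_x\eta(t_0,x_0)=0$ (the macroscopic trace of a collision), they replace the test function by a flattened version $\tilde\eta^\sigma$ that is constant on $(x_0-\sigma,x_0+\sigma)$; this forces the initial separation between adjacent opposite-oriented level points to be at least $2\sigma$. Letting those points move under the constant-speed ODE system \eqref{levelsetode} with speed tuned as $L \sim \sigma^{-1/2}$, no collision occurs on the time interval $[t_0-c\sigma,t_0+c\sigma]$, so the layer ansatz $H^\ep$ is a genuine global-in-space supersolution there (Lemma~\ref{hepsupersolution}). Comparison then gives $u^\ep\le H^\ep$, passing $\ep\to0$ yields a finite-difference inequality for $\eta$, and letting $\sigma\to 0$ produces $\partial_t\eta(t_0,x_0)\le 0$, which is exactly the viscosity inequality when the gradient vanishes. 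This is the new idea relative to \cite{patsan}; your proposal as written is missing it and replaces it with an argument that, even if it could be made rigorous, is structurally different and much more delicate.

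A secondary point: writing the ansatz as $\sum_i \ep\,\sigma_i\,\phi_0((x-x_i)/\delta)$ with $\sigma_i\in\{\pm1\}$ is not the correct encoding of negatively oriented dislocations. The paper's negative layer is $\hat\phi(z)=\phi(-z)-1$, not $-\phi(z)$; this matters because $\hat\phi$ ranges from $0$ to $-1$ and solves the same stationary PN equation, while $-\phi$ does not. You also omit the $O(\delta)$ corrector $\psi$ solving \eqref{psi}, which the paper needs to make the layer sum an actual super/subsolution at order $\delta$. These are fixable details but they compound with the larger structural gap above.
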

\begin{rem}
The quantity $A_\ep$ in Theorem \ref{mainthm}, will be made explicit later on, see Section \ref{Aepsec}. The condition  $\delta A_\ep\to 0$ as $(\ep,\delta)\to(0,0)$ is automatically satisfied  if we have some control on the number of dislocation points at time 0 with respect to $\ep$. This is the case, for example, when $u_0$ is either monotone or goes to $0$ and $l$, respectively as $x\to-\infty$ and $x\to+\infty$, faster or equal than respectively $c/x$ and $l+c/x$, for some $c>0$, see   Section \ref{Aepsec}. The latter condition is natural in this setting, see \eqref{phiinfinity}. In these two situations $\ep$ and $\delta$ are two free parameters. 
\end{rem}
\begin{rem}
For simplicity, in what follows we will assume that $\delta=\delta_\ep\to0$ as $\ep\to0$ and we will denote by 
$$u^\ep(t,x)=u^{\ep,\delta}(t,x)$$ the solution of~\eqref{uepeq}. 
\end{rem}
To prove Theorem \ref{mainthm}, the idea is to approximate the dislocation particles with points $x_i(t)$ where the limit function $\us$ attains the value $\ep i$ at time $t$, $i\in\Z$.
 We then  show that 
 $$b_i\dot{x}_i=-\frac{\partial \us_t(t,x_i(t))}{|\partial_x \us(t,x_i(t))|}\simeq -c_0\I[\us(t,\cdot)](x_i(t)),$$
 with $b_i=\text{sgn}(\partial_x \us(t,x_i(t)))$, provided $\partial_x \us(t,x_i(t))\neq 0$.

 One of the main difficulties in the proof  of Theorem \ref{mainthm} consists in proving that $\partial_t \us=0$ (in the viscosity sense) at points 
  where  $\partial_x \us$ vanishes. This result  is also the main novelty with respect to our previous work \cite{patsan}. Indeed in the monotonic case we could prove, 
   by  using an  approximation argument, that if $u_0$, and thus the limit function  $\us$, is monotone,  it is enough to test equation~\eqref{ubareq} with test functions for $\us$ with non vanishing derivative in $x$. 
That argument cannot be applied in the present setting and we have to deal with   the case of test functions with vanishing derivatives.   
Roughly speaking, points $x$ where $\partial_x \us(t,x)= 0$ correspond to the locations where  collisions occur at time $t$. The proof here is based on a new analysis of how the datum in \eqref{uepeq} is  transported along the characteristics $x_i(t)$ around a collision point.  
The strategy and the heuristic of the proofs are explained in Section \ref{strategysec}.

Differentiating equation \eqref{ubareq} formally yields the  following system of equations for the positive and negative part of $f=\partial_x \us$, 
\beq\label{gromaeq}\begin{split} &\partial_t f^+=c_0\partial_x( f^+\mathcal{H}(f^+-f^-)),\\&
 \partial_t f^-=-c_0\partial_x( f^-\mathcal{H}(f^+-f^-)),
 \end{split}\eeq
with $\mathcal{H}$  the Hilbert transform. 
Equations \eqref{gromaeq}  are  the 1-D version of the 2-D  Groma-Balogh equations \cite{groma},   
a  popular model 
describing the evolution of the density of positive and negative oriented parallel straight dislocation lines. 
This is the first time such equations are formally derived from  the microscopic Peierls-Nabarro model. 

As a by-product of the proof of Theorem \ref{mainthm} we also obtain the following asymptotic behavior of the limit function. 

\begin{prop}\label{uasymptoticprop}
The limit function $\us$ satisfies 
\beq\label{ulimits}\lim_{x\to-\infty} \us(t,x)=0,\quad
 \lim_{x\to+\infty}\us(t,x)=l,\eeq
uniformly in $t\in [0,T]$, for any $T>0$. 
Moreover,  for all $(t,x)\in (0,+\infty)\times\R$\beq\label{ulesssupinfu0}\inf u_0\leq \us(t,x)\leq \sup u_0.\eeq
\end{prop}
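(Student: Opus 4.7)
The plan is to split Proposition \ref{uasymptoticprop} into the two assertions \eqref{ulesssupinfu0} and \eqref{ulimits}, each proved by a barrier argument at the prelimit level $u^\ep=u^{\ep,\delta}$ followed by a passage to the locally uniform limit furnished by Theorem \ref{mainthm}.

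For the global bound \eqref{ulesssupinfu0}, the key observation is that $W'$ vanishes on $\Z$ by \eqref{Wass}, so every constant $c=\ep k$ with $k\in\Z$ satisfies $\I[c]-\delta^{-1}W'(c/\ep)=0$ and is therefore a stationary solution of \eqref{uepeq}. Set $M=\sup u_0$ and $m=\inf u_0$, which are finite because $u_0\in C^{1,1}(\R)$ has finite limits at $\pm\infty$. I would pick integers $k^+_\ep=\lceil M/\ep\rceil$ and $k^-_\ep=\lfloor m/\ep\rfloor$, so that $\ep k^-_\ep\le u_0\le\ep k^+_\ep$ pointwise. The standard comparison principle for \eqref{uepeq} then gives $\ep k^-_\ep\le u^\ep(t,x)\le \ep k^+_\ep$ for every $(t,x)\in(0,+\infty)\times\R$, and sending $\ep\to 0$ (using $\ep k^\pm_\ep\to M,m$) in the locally uniform limit of Theorem \ref{mainthm} yields \eqref{ulesssupinfu0}.

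For the uniform asymptotic limits \eqref{ulimits}, I would fix $T>0$ and $\eta>0$ and use \eqref{u^0ass} to choose $R=R(\eta)>0$ with $|u_0(x)|\le\eta$ for $x\le -R$ and $|u_0(x)-l|\le\eta$ for $x\ge R$. The goal is to sandwich $u^\ep$ between two barriers $\Phi^\pm_\eta$ which (i) are respectively a super- and a sub-solution of \eqref{uepeq} up to an error $o(1)$ as $(\ep,\delta)\to(0,0)$, (ii) dominate/are dominated by $u_0$ at $t=0$, and (iii) converge to $\pm\eta+o(1)$ near $-\infty$ (and to $l\pm\eta+o(1)$ near $+\infty$). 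The natural building block is the basic Peierls--Nabarro transition $\phi$, the unique monotone solution of $\I[\phi]=W'(\phi)$ with $\phi(-\infty)=0$, $\phi(+\infty)=1$, whose rescalings $x\mapsto \ep\phi((x-x_0)/(\ep\delta))$ are stationary solutions of \eqref{uepeq} and whose $1/|x|$ decay is already exploited in the companion paper \cite{patsan}. Superposing $N_\ep\sim(\sup u_0-\eta)/\ep$ such profiles at well-separated centers in a compact interval (arranged to dominate the non-decreasing envelope $\sup_{y\le x}u_0(y)$), I would build a monotone
\[
\Phi^+_\eta(x)=\ep\lceil\eta/\ep\rceil+\ep\sum_{i=1}^{N_\ep}\phi\!\left(\frac{x-y_i}{\ep\delta}\right)\ge u_0(x)
\]
for which the transition tails give $\Phi^+_\eta(x)\le \ep\lceil\eta/\ep\rceil+C\ep\delta/(R-x)$ for $x\ll -R$. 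An analogous subsolution $\Phi^-_\eta$ is obtained by superposing \emph{decreasing} transitions $x\mapsto\ep(1-\phi((x-x_0)/(\ep\delta)))$ to account for the negatively oriented portions of $u_0$. Comparison then gives
\[
\Phi^-_\eta(x)-o(1)\le u^\ep(t,x)\le \Phi^+_\eta(x)+o(1),\qquad (t,x)\in[0,T]\times\R,
\]
and passing to the limit $\ep\to 0$ followed by $x\to-\infty$ and $\eta\to 0$ produces the first limit in \eqref{ulimits} uniformly in $t\in[0,T]$. The behavior at $+\infty$ is entirely analogous, with barriers that converge to $l$.

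The main technical obstacle I anticipate is the verification that $\Phi^\pm_\eta$, which necessarily mix transitions of both orientations because $u_0$ is not monotone, are still approximate super/sub-solutions of \eqref{uepeq} with a quantitatively small error. This is precisely the new analytical difficulty addressed in the rest of the paper (the control of interactions and collisions between oppositely oriented dislocations), and the hypothesis $\delta A_\ep\to 0$ of Theorem \ref{mainthm} is what absorbs the superposition errors uniformly on $[0,T]\times\R$, so that the passage to the limit is legitimate.
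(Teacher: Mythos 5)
Your proof of the bound \eqref{ulesssupinfu0} is correct and essentially equivalent to the paper's. You work at the prelimit level: the constants $\ep k$ with $k\in\Z$ are stationary solutions of \eqref{uepeq} since $W'=0$ on $\Z$, so sandwiching $u_0$ between $\ep\lfloor \inf u_0/\ep\rfloor$ and $\ep\lceil \sup u_0/\ep\rceil$ and comparing gives uniform bounds that pass to the limit. The paper instead compares at the \emph{limit} level, observing that constants solve $\partial_t u = c_0|\partial_x u|\,\I[u]$ and applying Proposition \ref{existHeff} directly; that route is slightly cleaner but the two are interchangeable.

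Your proof of the uniform tails \eqref{ulimits}, however, has a genuine gap. A \emph{static} superposition $\Phi^+_\eta(x)=\ep\lceil\eta/\ep\rceil+\ep\sum_{i}\phi\!\left(\tfrac{x-y_i}{\ep\delta}\right)$ is not an approximate supersolution of \eqref{uepeq}: the pairwise interaction between neighbouring transitions produces a net force of size $O(1)$ in the rescaled variables, so the layers must move. This is exactly why Lemma~\ref{hepsupersolution} builds supersolutions with drifting centers $x_i(t)=x_i^0-b_ic_0Lt$, and why Lemmas~\ref{supermonotincr} and \ref{supermonotincr2} produce a \emph{time-shifted} bound $v_0(x-c_0Lt)+o_\ep(1)\le v^\ep(t,x)\le v_0(x+c_0Lt)+o_\ep(1)$ rather than a static $o(1)$ error. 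Your claimed comparison estimate $\Phi^-_\eta(x)-o(1)\le u^\ep(t,x)\le\Phi^+_\eta(x)+o(1)$ is too strong as written (it would fail already for exactly monotone $u_0$); it must carry the $c_0Lt$ drift. On a fixed $[0,T]$ the drift is of course bounded so the final limit $x\to-\infty$ still goes through once the estimate is corrected, but the missing drift is not an $o(1)$ perturbation and cannot simply be absorbed.

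A second, smaller misconception: you worry that your barriers ``necessarily mix transitions of both orientations because $u_0$ is not monotone,'' and invoke the collision analysis of the rest of the paper to hope the mixing errors cancel. The paper sidesteps this entirely in Lemma~\ref{asymptou-u+}: one sandwiches $u_0$ between \emph{monotone} envelopes $v_1\le u_0\le v_2$ (for the $-\infty$ tail) and $w_1\le u_0\le w_2$ (for the $+\infty$ tail) as in \eqref{u_01}--\eqref{u_02}, compares $u^\ep$ with the solutions of \eqref{uepeq} emanating from those monotone data, and applies the single-orientation propagation estimates of \cite{patsan} (restated here as Lemmas~\ref{supermonotincr} and~\ref{supermonotincr2}). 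Because each barrier is monotone, only one orientation of transition appears and no collision analysis is needed for this proposition; the new machinery of the present paper is required only for the interior behaviour in Theorem~\ref{mainthm}.
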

\begin{rem}
The limits \eqref{ulimits} can be interpreted as the property of the dislocations particles to remain in a compact set in the interval $[0,T]$. 
Property \eqref{ulesssupinfu0}, which is an easy consequence of the comparison principle, says that, while dislocations may annihilate, no 
 dislocations are created.
\end{rem}

\subsection{The Peierls-Nabarro model}\label{PNsec}
The Peierls-Nabarro model \cite{n,p} is a phase field model for dislocation
dynamics incorporating atomic features into continuum framework.
In a phase field approach, the dislocations are represented by
transition of a continuous field. 
We refer to~\cite{nabarro} for a survey of the
Peierls-Nabarro model. 
See also Section~1.1 in~\cite{patsan} for some basic physical derivation.  
After a section of the three-dimensional crystal with a plane, a straight  dislocation line can be identified with a point  on a line.
A positive oriented dislocation  located at 0 is described by the transition from 0 to 1 of the phase transition function $\phi$,  solution to 
\begin{equation}\label{phi}
\begin{cases}
\I[\phi]=W'(\phi)&\text{ in } \R\\
\phi'>0&\text{ in}\quad \R\\
\displaystyle\lim_{z\to-\infty}\displaystyle\phi(z)=0,\quad\lim_{z\to+\infty}\displaystyle\phi(z)=1, \quad \phi(0)=\displaystyle\frac{1}{2},
\end{cases}
\end{equation}
while a negative oriented dislocation  located at 0  is described by the transition from 0 to -1 of the function $\hat{\phi}$, solution to  
\begin{equation}\label{phi-}
\begin{cases}
\I[\hat{\phi}]=W'(\hat{\phi})&\text{ in } \R\\
\hat{\phi}'<0&\text{ in}\quad \R\\
\displaystyle\lim_{z\to-\infty}\displaystyle\hat{\phi}(z)=0,\quad\lim_{z\to+\infty}\displaystyle\hat{\phi}(z)=-1, \quad \hat{\phi}(0)=-\displaystyle\frac{1}{2}.
\end{cases}
\end{equation}
Under assumption  \eqref{Wass}, existence of a unique solution of \eqref{phi} has been proven in \cite{csm,psv}.  Notice that, by the periodicity of $W$,  
$\hat{\phi}(z)=\phi(-z)-1$ is the unique solution to \eqref{phi-}
Define
\beq \label{phibi}
\phi(z,b) := \begin{cases}
\phi(z) \quad &\text{if} \quad b = 1\\
\hat{\phi}(z) \quad &\text{if} \quad b = -1.
\end{cases}
\eeq
\medskip

In the face cubic structure (FCC) observed in many metals and
alloys, dislocations move at low temperature on the slip plane.
The dynamics for a collection of straight parallel edge dislocations lines, all contained in a single slip plane, moving with self-interactions (no exterior forces)
 is then
described by the evolutive version  of the Peierls-Nabarro model
 (see for instance \cite{MBW} and \cite{Denoual}):
\begin{equation}\label{nabarroevolutintro}
\p_{t} u=\I[u(t,\cdot)]-W'\left(u\right)\quad\text{in}\quad (0,+\infty)\times\R,\\
\end{equation}
with initial condition 
\beq\label{nabarroevolutintroinitial}u(0,x)=\sum_{i=1}^{N}\phi\left(x-\frac{y_i^0}{\delta}, b_i\right),\eeq
where $\phi$ is the solution of \eqref{phi}, $N$ is  the number of dislocations,
$y_i^0/\delta$ are the initial locations of the dislocation points and neighboring dislocations are at distance at microscopic scale of order  $1/\delta$, that is 
$$0\le y^0_{i+1}-y^0_i\sim 1.$$ 
The number $b_i\in\{-1,1\}$ identifies the orientation of the dislocation: when  $b_i=1$ the dislocation is positive oriented, when   $b_i=-1$ it is instead  negative oriented. 

Let $u$ be the solution of \eqref{nabarroevolutintro}  with initial condition \eqref{nabarroevolutintroinitial}. 
Then,  the rescaled function 
$$v^\delta(t,x)= u\left(\frac{t}{\delta^2},\frac{x}{\delta}\right),$$ which is solution to the integro-differential equation in \eqref{uepeq} with $\ep=1$ converges as $\delta\to0$ 
 to a sum of Heaviside functions of the form 
$\sum_{i=1}^N H(b_i(x-y_i(t)))$, where 
the interphase (jump) points $y_i(t)$,  $i=1,\ldots,N$ evolve in time   driven by  the following system of  ODE:
\beq\label{DDD}\begin{cases}\dot{y}_i=c_0\displaystyle\sum_{j\neq i}\frac{b_ib_j}{y_i-y_j}&\text{in }(0,T_c)\\
y_i(0)=y_i^0,
\end{cases}
\eeq
where $c_0$ is defined by
\beq\label{c0}c_0=\left(\int_\R (\phi')^2\right)^{-1},\eeq
see \cite{gonzalezmonneau, pv3}. 
 Here $0<T_c\le+\infty$ is the first time a collision  between opposite oriented interphase points occurs. Indeed, if $y_i$ and $y_{i+1}$ have opposite orientation, that is $b_i b_{i+1}=-1$, the equation for  $\dot y_i$ contains the term $-1/(y_i-y_{i+1})>0$ and  the equation for  $\dot y_{i+1}$ contains the term $-1/(y_{i+1}-y_{i})<0$. Since $y_i(0)<y_{i+1}(0)$,   the two points may collide in finite time.  Points with same orientation repel each other, thus never collide. System \eqref{DDD}  can be extended after collision by removing the particles that annihilate at collision,
see    \cite{pv3, mmp}. In the physical model, the ODE system \eqref{DDD} represents the discrete dynamics of $N$  dislocation  points with possibly different orientation.

In the present we want to identify at {\em large (macroscopic) scale} the evolution model for
the dynamics of a density of dislocations. We introduce a further parameter $\ep$ and consider a number of dislocations $N=N_\ep$ such that
$N_\ep\to+\infty$ as $\ep\to0$ and we send both $\delta$ and $\ep$ to 0 together. We do not specify  how $N_\ep$ goes to 0 with $\ep$ but we only require that 
$$\ep^2 N_\ep\delta\to0$$ as $\ep\to0$. 
We consider the
following rescaling
$$u^\ep(t,x)=\ep u\left(\frac{t}{\ep \delta^2},\frac{x}{\ep\delta}\right),$$
with $u$  the solution of \eqref{nabarroevolutintro}-\eqref{nabarroevolutintroinitial}. 
Then we see that $u^\ep$ is solution of \eqref{uepeq} with initial datum 
\beq\label{u0introphi}u^\ep(0,x)=\sum_{i=1}^{N_\ep}\ep \phi\left(\frac{x-x_i^0}{\ep\delta},b_i\right),\eeq
with $x_i^0=\ep y_i^0$.

In general, we consider an initial datum $u_0$ satisfying \eqref{u^0ass}.
One can actually prove (see Proposition \ref{approxpropfinal}) that any function satisfying \eqref{u^0ass},  can be approximated by a function of the form  \eqref{u0introphi}.

The parameter $\ep$ can be interpreted as the ratio between the microscopic scale and the macroscopic scale. The parameter $\delta$ instead which concerns  the passage from the microscopic scale to the mesoscopic scale (model \eqref{DDD}) is related to the density of dislocations being small at microscopic scale. 

\subsection{Organization of the paper}
The paper is organized as follows. In Section \ref{notationsec} we introduce notations and recall some general auxiliary results that  will be used in the  paper.
 The strategy and the heuristic of the proof of Theorem \ref{mainthm} are presented in Section \ref{strategysec}.  In Section 4 we prove a discrete approximation formula of the fractional Laplacian $\I$ which extends to non monotonic functions the one given in \cite{patsan}. 
In Section \ref{supersolutionssec} we construct local in time and global in space supersolutions of \eqref{uepeq}.
 Sections \ref{convergence}  and \ref{Initialconditionsection} are devoted to the proof of our main result, Theorem \ref{mainthm}.
Proposition \ref{uasymptoticprop} is proven in Section \ref{additional}.  Finally, the proofs of some auxiliary lemmas are given in Section \ref{lemmatasec}.
\section{Definitions, Notations and preliminary results}\label{notationsec}

\subsection{Definitions and Notations} 
Let $v$  be a function satisfying the following assumptions
\beq\label{vass}
\begin{cases}
v\in C^{1,1}(\mathbb{R}),\\
v \text{ not constant},\\
\lim \limits_{x\to-\infty}v(x)=0,\\
\lim \limits_{x\to+\infty}v(x)=l, \text{ for some }l\in\R.\\
\end{cases}
\eeq
For a fixed $\ep \in (0,1)$, we define
 $$\Lambda_i := \{x \, |\, i\ep< v(x) <  (i+1)\ep\},\quad i=s_\ep,\ldots, S_\ep,$$
 where $s_\ep:=\left \lceil\frac{\inf_\R v}{\ep} \right \rceil$ and $S_\ep:=\left \lfloor\frac{\sup_\R v}{\ep} \right \rfloor.$
By the  limits in \eqref{vass}, the set $\Lambda_i$ may contain two connected components of the form $(-\infty, a)$ and $(c,+\infty)$ for some $a,\,c\in\R$. 
We denote by $\tilde\Lambda_i$ the subset  of $\Lambda_i $ obtained by eventually removing those two connected components and all  connected components in which the oscillation of $v$ is smaller than $\ep$. 
Then, there exists a compact set $[-K_\ep,K_\ep]$ such that $\tilde\Lambda_i\subseteq [-K_\ep,K_\ep]$ for all $i=s_\ep,\ldots, S_\ep$.
 Moreover, any connected component of $\tilde\Lambda_i $ has measure bigger or equal than  $\ep/L$, where $L$ is the Lipschitz constant of $v$. Indeed,  if $A$ is any connected component of $\tilde\Lambda_i $, then there exists a point $x_0\in A$ such that $v(x_0)=\ep(i+1/2)$ and by the regularity of $v$, the interval $(x_0-\ep/(2L),x_0+\ep/(2L))$ 
  is contained in $A$. We infer that the number of connected components of $\tilde\Lambda_i$ is finite. In particular, the set $\bigcup_i \partial \tilde\Lambda_i $ has a finite number of points,  that is
\beq \label{xi0}
\displaystyle\bigcup_{i=s_\ep}^{S_\ep} \partial \tilde\Lambda_i = \{x_1,x_2,...,x_{N_\ep}\},
\eeq
for some positive integer $N_\ep$ depending on $\ep$, where the points $x_i$ are ordered such that $x_1<x_2<...<x_{N_\ep}$.
 Define $x_0=-\infty$, $v(x_0) = 0$ and for each $i\in \{1,2,...,N_\ep\}$,
\beq \label{bidef0}
b_i := \dfrac{v(x_i) - v(x_{i-1})}{\ep}\in \{-1,0,1\}.
\eeq
Let  $\mathcal{P}$ be  the subset of $\bigcup_{i=s_\ep}^{S_\ep} \partial \tilde\Lambda_i $ obtained by removing  the points $x_i$ for which $b_i=0$. By relabeling 
the points in 
$\mathcal{P}$,  we can write   
\beq \label{xi}
\mathcal{P}=\displaystyle \{x_1,x_2,...,x_{N_\ep}\}.
\eeq
By \eqref{bidef0}, for $x_i$ defined as in \eqref{xi}, we have 
\beq \label{bidef}
b_i := \dfrac{v(x_i) - v(x_{i-1})}{\ep}\in \{-1,1\},
\eeq
which also gives the following expression for $v(x_i)$
\beq \label{vxidef}
v(x_i) = v(x_{i-1}) + b_i\ep = \sum_{j=1}^i b_j \ep.
\eeq
We will sometimes refer to the level set points defined in \eqref{xi} as {\em particles}. Notice that  by removing the level set points corresponding to $b_i=0$ the oscillation of $v$ in the interval $[x_i,x_{i+1}]$ could be greater than $\ep$ but it is  always less 
than   $2\ep$, that is, 
\beq\label{oscvxixi+1}|v(x)-v(y)|\leq 2\ep,\quad\text{for all }x,\,y\in [x_i,x_{i+1}].\eeq

For any $x\in(x_1,x_{N_\ep})$ we will call the {\em closest particle to $x$} the biggest  point $x_{i_0}$ such that
$|x-x_{i_0}|\leq |x-x_i|$ for all $i=1,\ldots, N_\ep$. 

Given integers $M$ and $N$ such that $1\leq M\le N\le N_\ep$, we denote the number of particles with $b_i = 1$ and particles with $b_i = -1$ in $[x_M,x_N]$ by $n^+_{M,N}$ and $n^-_{M,N}$ respectively. Precisely, 
\begin{eqnarray*}
n^+_{M,N}:=| \{i \in \{M,...,N\} \ | \ b_i = 1\}|,\\
 n^-_{M,N}:= |\{i \in \{M,...,N\} \ | \ b_i = -1\}|.
 \end{eqnarray*}
Also, we define
\beq \label{nmn} n_{M,N}:=n^+_{M,N}-n^-_{M,N}.\eeq
When $M=1$ and $N=N_\ep$, we denote 
\beqs N^+_\ep:=n^+_{1,N_\ep},\quad N^-_\ep:=n^-_{1,N_\ep}.\eeqs
Note that $N_\ep= N^+_\ep+N^-_\ep$.
\begin{rem}
Using \eqref{vxidef}, \eqref{nmn} can also be expressed by
\beq\label{stronzo}
\ep n_{M,N} = \ep n^+_{M,N}-\ep n^-_{M,N}= \ep \sum_{\stackrel{i=M}{b_i = 1}}^N b_i + \ep  \sum_{\stackrel{i=M}{b_i = -1}}^N b_i = \sum_{i=M}^N b_i \ep = v(x_N) - v(x_M) + b_M\ep.
\eeq
In particular, for $M=1$, we have $v(x_1) = b_1\ep$, which yields
\beqs \label{n1m}
\ep n_{1,N} = v(x_N).
\eeqs
\end{rem}

 Similarly to Definition \eqref{phibi}, for the Heaviside function $H$, we define for any $z\in \mathbb{R}$ and $b\in\{-1,1\}$, 
\beq \label{hbi}
H(z,b) :=b H(z). \eeq

To construct sub and supersolution of \eqref{uepeq} we will often make use of   the following ODE's system
\beq \label{levelsetode}
\begin{cases}
\dot{x}_i(t) &= -c_0b_iL,\\
x_i(0) &= x_i^0,
\end{cases}
\eeq
where $x_1^0,x_2^0,...,x_{N_\ep}^0$ are the level set points of the initial condition, $L\in\R$ and $c_0$ is given by \eqref{c0}.

We denote by $B_r(x)$ the ball of radius
$r$ centered at $x$. The cylinder $(t-\tau,t+\tau)\times B_r(x)$
is denoted by $Q_{\tau,r}(t,x)$.
$\lfloor x \rfloor$ and $\lceil x\rceil$ denote respectively the
floor and the ceil integer parts of a real number $x$.

For $r>0$, we denote 
\beq\label{i1v}\I^{1,r}[v](x)=\frac{1}{\pi}PV
\displaystyle\int_{|y-x|\leq r}\displaystyle\frac{v(y)-v(x)}{(y-x)^{2}}dy,\eeq
and 
\beq\label{i2v}\I^{2,r}[v](x)=\frac{1}{\pi}
\displaystyle\int_{|y-x|>r}\displaystyle\frac{v(y)-v(x)}{(y-x)^{2}}dy.\eeq
Then we can write
$$\I[v](x)=\I^{1,r}[v](x)+\I^{2,r}[v](x).$$
Let $I\subset [0,+\infty)$ be an interval.  We denote by $USC_b(I\times\R)$ (resp.,
$LSC_b(I\times\R)$) the set of upper (resp., lower)
semicontinuous functions on $I\times\R$ which are bounded on
$([0,T]\cap I)\times\R$ for any $T>0$ and we set
$C_b(I\times\R):=USC_b(I\times\R)\cap
LSC_b(I\times\R)$.
We denote by $C_b^2([0,+\infty)\times\R)$ the subset of functions of $C_b([0,+\infty)\times\R)$ with continuous second derivatives.
Finally,  $C^{1,1}(\R)$ is the set of functions with bounded $C^{1,1}$ norm over~$\R$.

Given a sequence $\{u^\ep\}$ we denote
$${\limsup_{\ep\to0}}^*u^\ep(t,x)=\sup\Big\{\limsup_{\ep\to0} u^\ep(t_\ep,x_\ep)\,|\,t_\ep \to t\text{ and }x_\ep\to x\Big\},$$
and 
$${\liminf_{\ep\to0}}_*u^\ep(t,x)=\inf\Big\{\liminf_{\ep\to0} u^\ep(t_\ep,x_\ep)\,|\,t_\ep \to t\text{ and }x_\ep\to x\Big\}.$$

Given a quantity   $E=E(x)$, we write  $E=O(A)$ is there exists a constant $C>0$ such that, for all $x$, 
$$|E|\le C A.$$
We write $E=o_\ep(1)$ if 
$$\lim_{\ep \to0} E=0,$$
uniformly in $x$. 

\subsection{Definition of $A_\ep$}\label{Aepsec}
Since $u_0$ satisfies \eqref{u^0ass}, it is easy to see that there exist $C^{1,1}$ functions $v_1$ and $w_1$ such that
\beq\label{u_01} \begin{cases}v_1\le u_0,\quad v_1(-\infty)=0,\quad v_1\text{ is non-increasing } \\
w_1\le u_0,\quad w_1(+\infty)=l,\quad w_1\text{ is non-decreasing},
\end{cases}
\eeq
and there exist $C^{1,1}$ functions $v_2$ and $w_2$ such that
\beq\label{u_02} \begin{cases}v_2\ge u_0,\quad v_2(-\infty)=0,\quad v_2\text{ is non-decreasing} \\
w_2\ge u_0,\quad w_2(+\infty)=l,\quad w_2\text{ is non-increasing}.
\end{cases}
\eeq
Let $K_\ep>0$ be such that for $i=1,2$, 
\beqs |v_i(x)|<\frac{\ep}{4}\text{ if }x< -K_\ep\quad \text{and}\quad |w_i(x)-l|<\frac{\ep}{4}\text{ if }x> K_\ep.\eeqs 
Then, all the points in the $\ep$ level sets of  $u_0$ defined as in \eqref{xi}  must belong to the compact set $[-K_\ep,K_\ep]$ and by the forthcoming formula \eqref{proplippart1},  if $N^0_\ep$ is the number of such points, then $N^0_\ep\le CK_\ep/\ep$. 
Set 
$$A_\ep:=\ep K_\ep.$$ 
Then we choose  $\delta=o_\ep(1)$ such that 
\beq\label{Aepo0bound}\delta A_\ep=o_\ep(1).\eeq The condition guarantees that 
\beq\label{Nepo0bound}\ep^2N^0_\ep\delta=o_\ep(1).\eeq
Notice that if $u_0$ is monotonic then $N^0_\ep\leq (\sup u_0-\inf u_0)/\ep$, therefore \eqref{Nepo0bound} is always satisfied and no condition on how $\delta$ goes to 0 as $\ep\to0$ is required. 
It is easy to see that \eqref{Aepo0bound} holds true under no condition on $\delta$ also if  $u_0$ satisfies the following asymptotic estimate
$$ |u_0(x)-l H(x)|\leq \frac{C}{x}\quad \text {if }|x|>1,$$for some $C>0$, with $H$ the Heaviside function. 
\subsection{Short and long range interaction}
We start by recalling a basic fact about the operator $\I$.
Given $v\in C^{1,1}(\R)$ and $r>0$ we can split $\I[v]$ into the short and long range interaction as follows,
$$\I[v](x)=\I^{1,r}[v](x)+\I^{2,r}[v](x),$$ where $\I^{1,r}[v](x),\,\I^{2,r}[v](x)$ are defined respectively by \eqref{i1v} and \eqref{i2v}.
The short range interaction can be rewritten as 
$$ \I^{1,r}[v](x)=\frac{1}{2\pi}\int_{|y|<r}\dfrac{v(x+y)+v(x-y)-2v(x)}{y^2}dy,$$ 
Therefore, 
\beq\label{I11boundprelim}| \I^{1,r}[v](x)|\leq \frac{r}{\pi}\|v\|_{C^{1,1}(\R)}.\eeq
The long range interaction can be bounded as follows
$$| \I^{2,r}[v](x)|\leq \frac{4}{r\pi}\|v\|_{\infty}.$$
In particular, choosing $r=2$, we see that 
\beq\label{boundfractionalpreliminaries}
|\I[v](x)|\leq \frac{4}{\pi}\|v\|_{C^{1,1}(\R)}. 
\eeq
\subsection{The functions $\phi$ and $\psi$}

 In what follows we denote by  $H(x)$ the Heaviside function. Let $\alpha:=W''(0)>0$. 
\begin{lem}
\label{phiinfinitylem}Assume that  \eqref{Wass} holds, then there exists a unique solution $\phi$ of \eqref{phi}. Furthermore $\phi\in C^{2,\beta}(\R)$ and  there exist constants $K_0,K_1 >0$ such that
\begin{equation}\label{phiinfinity}\left|\phi(z)-H(z)+\frac{1}{\al \pi
z}\right|\leq \frac{K_1}{z^2},\quad\text{for }|z|\geq 1,
\end{equation}and for any $z\in\R$
\begin{equation}\label{phi'infinity}0<\frac{K_0}{1+z^2}\leq
\phi'(z)\leq\frac{K_1}{1+z^2}.\end{equation}
\end{lem}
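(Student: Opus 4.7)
The plan is to separate the proof into three parts: existence/uniqueness/regularity, the asymptotic formula \eqref{phiinfinity}, and the gradient bounds \eqref{phi'infinity}.

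For the first part, existence and uniqueness of $\phi$ under the assumption \eqref{Wass} are established in \cite{csm,psv}, so I would simply quote those results. To upgrade the regularity to $C^{2,\beta}$, I would use a standard bootstrap: since $\phi$ is bounded and $W'\in C^{1,\beta}$, the composition $W'(\phi)$ lies in $C^{0,\beta}$, and Schauder-type estimates for the operator $\I=-(-\Delta)^{1/2}$ then give $\phi\in C^{1,\beta}$. Iterating once more yields $W'(\phi)\in C^{1,\beta}$ and hence $\phi\in C^{2,\beta}$.

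For the asymptotic \eqref{phiinfinity}, the heuristic is that, at $\pm\infty$, $\phi$ is close to $H$, and by periodicity and \eqref{Wass}, the linearization of $W'$ at $0$ and $1$ is $\alpha\,\mathrm{id}$. A direct computation gives the basic identity $\I[H](z)=-\tfrac{1}{\pi z}$ for $z\neq 0$; matching $\I[\phi]\approx -\tfrac{1}{\pi z}$ with $W'(\phi)\approx \alpha(\phi-H)$ forces $\phi-H\sim -\tfrac{1}{\alpha\pi z}$. To make this rigorous with the correct $O(1/z^2)$ error, I would construct, for $R$ and $K$ large, sub- and supersolutions of the form
\begin{equation*}
\phi^{\pm}(z)\;=\;H(z)-\frac{1}{\alpha\pi z}\pm\frac{K}{z^{2}}
\end{equation*}
on $\{|z|\geq R\}$, after regularizing the singularity at $0$ by a smooth cutoff that does not affect the far-field. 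The key technical step is to expand $\I$ applied to each building block, obtaining $\I[1/z](z)=o(1/z^{2})$ and $\I[1/z^{2}](z)=O(1/z^{3})$ for $|z|\geq R$, so that the error $\pm K/z^{2}$ controls the sign of $\I[\phi^{\pm}]-W'(\phi^{\pm})$ on $\{|z|\geq R\}$. Choosing $R$ large enough that $\phi(\pm R)$ lies between $\phi^{-}(\pm R)$ and $\phi^{+}(\pm R)$, the comparison principle for $\I$ on $[R,+\infty)$ and $(-\infty,-R]$ concludes \eqref{phiinfinity}.

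For the gradient bounds \eqref{phi'infinity}, differentiating $\I[\phi]=W'(\phi)$ gives $\I[\phi']=W''(\phi)\phi'$. Since $W''(\phi(z))\to\alpha>0$ as $|z|\to\infty$, I would use barriers of the form $c/(1+z^{2})$: the upper bound follows from showing that $K_{1}/(1+z^{2})$ is a supersolution of the linearized equation outside a compact set, then using $\phi'\in L^{\infty}$ and continuity to adjust $K_{1}$ globally; the lower bound combines the strict positivity of $\phi'$ on compacts (Hopf-type strong maximum principle for $\I$) with a subsolution of the form $K_{0}/(1+z^{2})$ at infinity. The main obstacle in the whole lemma is the careful asymptotic expansion of $\I$ on the tail ansatz in \eqref{phiinfinity}, since the term $1/(\alpha\pi z)$ is only borderline integrable and its nonlocal action must be identified with the correct error to produce strict barriers.
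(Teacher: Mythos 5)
The paper does not actually prove this lemma: the proof in the text consists of a one-line citation, attributing existence, uniqueness and the gradient bound \eqref{phi'infinity} to the works of Cabr\'e--Sol\`a-Morales and Palatucci--Savin--Valdinoci, and the refined asymptotic \eqref{phiinfinity} to Gonz\'alez--Monneau. Your proposal is therefore a blind reconstruction of what those references do, and the overall strategy (quote the variational existence theory, bootstrap Schauder regularity, and run a sub/supersolution argument against the ansatz $H(z)-\tfrac{1}{\alpha\pi z}$) is indeed the route taken there.

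There is, however, a concrete gap in the key technical step. The claim $\I[1/z^{2}](z)=O(1/z^{3})$ is false. After a bounded regularization $k$ of $1/z^{2}$ near the origin, the identity
\begin{equation*}
\I[k](z)=\frac{1}{\pi}\,PV\!\int_{\R}\frac{k(y)-k(z)}{(y-z)^{2}}\,dy
\end{equation*}
is dominated, for $|z|$ large, by the region where $k(y)$ is of order one (i.e.\ $|y|$ bounded or $|y|\ll|z|$), which yields the leading behavior $\I[k](z)\sim \frac{1}{\pi z^{2}}\int_{\R}k$. This is $O(1/z^{2})$, the \emph{same} order as the term $\mp\alpha K/z^{2}$ coming from linearizing $W'$ at the barrier $\phi^{\pm}=H-\tfrac{1}{\alpha\pi z}\pm \tfrac{K}{z^{2}}$. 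Since $K$ multiplies both the favorable linearization term and the unfavorable $K\,\I[k](z)$, sending $K\to\infty$ does not produce the desired sign; the viability of the barrier hinges on the comparison between $\alpha=W''(0)$ and $\tfrac{1}{\pi}\int k$, which is not controlled by the assumption \eqref{Wass}. To rescue the argument one must tune the cutoff scale $R$ with $K$ (so that $\int k\to 0$ as $R\to\infty$), carefully track the contribution at the matching interface $|z|=R$, and verify the ordering $\phi^{\pm}\gtrless\phi$ on the interior set $\{|z|<R\}$ before invoking a comparison principle for $\I$ on a half-line; none of these steps is routine, which is presumably why the paper delegates the estimate to Gonz\'alez--Monneau. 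The parallel remark applies, with less bite, to the barrier argument you sketch for \eqref{phi'infinity}: the paper simply cites it, and your outline omits the verification that the quantitative Hopf-type lower bound for $\I$ actually produces a constant $K_{0}$ uniform over the compact core.
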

\begin{proof} The existence of a unique solution of \eqref{phi} and estimate \eqref{phi'infinity} are proven in \cite{csm,psv}. Estimate \eqref{phiinfinity} is proven in \cite{gonzalezmonneau}.  
\end{proof}
Let $c_0$ be defined as in \eqref{c0}. 
Let us introduce the function  $\psi$ to be the solution of
\begin{equation}\label{psi}
\begin{cases}\I[\psi]=W''(\phi)\psi+\frac{L}{\alpha}(W''(\phi)-W''(0))+c_0L\phi'&\text{in}\quad \R\\
\lim_{z\rightarrow{\pm}\infty}\psi(z)=0.
\end{cases}
\end{equation} 
For later purposes, we recall the following decay estimate
on the solution of~\eqref{psi}:
\begin{lem}
\label{psiinfinitylem}
Assume that  \eqref{Wass} holds, then there exists a unique solution $\psi$ to \eqref{psi}. Furthermore 
$\psi\in C^{1,\beta}(\R)$ 
and  for any $L\in\R$  there exist constants $K_2$
and $K_3$, with $K_3>0$, depending on $L$ such that
\begin{equation}\label{psiinfinity}\left|\psi(z)-\frac{K_2}{
z}\right|\leq\frac{K_3}{z^2},\quad\text{for }|z|\geq 1,
\end{equation}and for any $z\in\R$
\begin{equation}\label{psi'infinity}-\frac{K_3}{1+z^2}\leq
\psi'(z)\leq \frac{K_3}{1+z^2}.
\end{equation}
\end{lem}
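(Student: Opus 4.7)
My plan is to treat \eqref{psi} as a linearized problem around the profile $\phi$. Writing it in the form $\mathcal{L}\psi = F$ where
$$\mathcal{L}\psi := \I[\psi] - W''(\phi)\psi, \qquad F(z) := \frac{L}{\alpha}\bigl(W''(\phi(z)) - W''(0)\bigr) + c_0 L\,\phi'(z),$$
the first observation is that $\mathcal{L}\phi' = 0$: differentiating the profile equation $\I[\phi]=W'(\phi)$ gives $\I[\phi'] = W''(\phi)\phi'$. Hence the natural solvability condition is $\int_\R F\,\phi'\,dz = 0$, and I would verify this by an explicit computation using the fundamental theorem of calculus: by periodicity $W'(0)=W'(1)=0$, so $\int_\R W''(\phi)\phi'\,dz = W'(\phi)\big|_{-\infty}^{+\infty}=0$, while $\int_\R W''(0)\phi'\,dz = \alpha$ and $\int_\R(\phi')^2\,dz = 1/c_0$ by \eqref{c0}. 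Adding the two contributions yields $(L/\alpha)(-\alpha)+c_0L\cdot(1/c_0)=0$, as required.

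For existence, I would rewrite the equation as $(\I-\alpha)\psi = (W''(\phi)-\alpha)\psi + F$. Since the Fourier symbol of $\I-\alpha$ is $-|\xi|-\alpha$, bounded away from zero, the operator $(\I-\alpha)^{-1}$ is bounded on reasonable function spaces, and since $W''(\phi)-\alpha\to 0$ at $\pm\infty$ (by continuity and the limits of $\phi$ together with the periodicity of $W''$), multiplication by this coefficient is compact on spaces incorporating decay at infinity. Fredholm alternative then gives solvability under the orthogonality condition already verified, with kernel spanned by $\phi'$; the uniqueness claim in the lemma selects the representative characterized by the matched asymptotic $\psi(z)-K_2/z=O(1/z^2)$ (since $\phi'$ decays faster like $O(1/z^2)$, this essentially fixes the ambiguity modulo a choice of normalization). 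The $C^{1,\beta}$ regularity of $\psi$ then follows from standard Schauder-type estimates for the half-Laplacian applied to the equation $\I\psi = W''(\phi)\psi + F$, whose right-hand side is bounded in $C^\beta$ once $\psi$ is known to be bounded.

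The delicate part is the decay \eqref{psiinfinity}. From Lemma \ref{phiinfinitylem}, $\phi(z)-H(z)=-1/(\alpha\pi z)+O(1/z^2)$ and $\phi'(z)=O(1/z^2)$. By Taylor expanding $W''$ about $0$ (and, using periodicity, about $1$), one gets
$$W''(\phi(z))-W''(0) = W'''(0)\bigl(\phi(z)-H(z)\bigr) + O(1/z^2) = -\frac{W'''(0)}{\alpha\pi z} + O(1/z^2).$$
Thus $F(z)=-\tfrac{L\,W'''(0)}{\alpha^2\pi z}+O(1/z^2)$. Seeking an ansatz $\psi_{\rm app}(z)=K_2/z$ for large $|z|$ and using that $\I[1/z]$ vanishes away from the origin (so it contributes only an $O(1/z^2)$ error once suitably cut off near $0$), the leading-order balance $W''(0)\,K_2/z = -F(z)+O(1/z^2)$ identifies
$$K_2 = \frac{L\,W'''(0)}{\alpha^3\pi}.$$
The pointwise inequality is then obtained by a barrier argument: one shows that $\psi_{\rm app}(z)\pm C/(1+z^2)$ (with $C$ large and smoothed near the origin) are respectively a super- and sub-solution of $\mathcal{L}\cdot=F$ outside a compact set, and combines this with a comparison principle (or, equivalently, iterated fixed-point arguments in weighted spaces with weight $1+z^2$). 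The gradient bound \eqref{psi'infinity} follows by differentiating the equation and applying the same weighted estimates to $\psi'$, using $\I[\psi']=W''(\phi)\psi'+W'''(\phi)\phi'\psi+F'$ and the known decay of $\phi'$, $\phi''$, and $\psi$.

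The principal obstacle is in the third step: making the formal asymptotic balance rigorous, because $1/z$ is not admissible on the whole line and the nonlocal operator couples near- and far-field behavior. This is handled most cleanly by constructing global explicit barriers whose principal part is $K_2/z$ smoothed near $0$, and controlling the error term in $\mathcal{L}$ applied to them; once this is in place, the comparison principle available for $\mathcal{L}$ at infinity (where $W''(\phi)$ is close to $\alpha>0$) propagates the decay uniformly.
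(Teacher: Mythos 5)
The paper's ``proof'' of this lemma is a one-line citation: existence and uniqueness are attributed to \cite{gonzalezmonneau}, and the decay estimates \eqref{psiinfinity}--\eqref{psi'infinity} to \cite{mp2}. So the paper itself offers no argument, and your proposal is a genuine attempt at a self-contained proof rather than an alternative route. Your overall architecture (split $\mathcal{L}=\I-W''(\phi)$, note $\mathcal{L}\phi'=0$, verify the orthogonality $\int F\phi'=0$ exactly as you did, Fredholm alternative for existence, barriers in the tail for decay) matches the general strategy used in the cited works, and the orthogonality computation is correct.

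There are, however, two concrete gaps. First, the identification of $K_2$ relies on a Taylor expansion $W''(\phi(z))-W''(0)=W'''(0)(\phi(z)-H(z))+O(1/z^2)$, which requires $W''$ to be differentiable (indeed $W\in C^{3}$ near $\Z$). Under the hypothesis \eqref{Wass}, $W$ is only $C^{2,\beta}$ with $\beta<1$, so the best one can say directly is $|W''(\phi(z))-W''(0)|\le C|\phi(z)-H(z)|^\beta=O(|z|^{-\beta})$, which decays \emph{slower} than $1/z$ and ruins the ansatz $\psi\sim K_2/z$. In other words, the conclusion \eqref{psiinfinity} is not reachable by your argument under the stated regularity; it silently uses more smoothness of $W$ (which is likely what the cited references assume). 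You should flag this explicitly rather than Taylor-expanding $W''$ as if it were $C^1$.

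Second, the uniqueness argument is internally inconsistent. You correctly observe that $\phi'\in\ker\mathcal{L}$ and that $\phi'=O(1/z^2)$; but precisely because $\phi'=O(1/z^2)$, replacing $\psi$ by $\psi+c\phi'$ leaves both the boundary condition $\psi(\pm\infty)=0$ and the asymptotics $\psi(z)-K_2/z=O(1/z^2)$ unchanged. So the matched asymptotic does \emph{not} select a unique representative, contrary to your claim. Uniqueness as stated in the lemma requires an extra normalization (e.g.\ $\int_\R\psi\,\phi'\,dz=0$, or the explicit integral representation used in \cite{gonzalezmonneau}); your ``modulo a choice of normalization'' parenthetical admits this but then contradicts the sentence it is attached to. Either impose a normalization and show it is preserved under your barrier/comparison argument, or restate uniqueness as uniqueness modulo $\mathrm{span}\{\phi'\}$.
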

\begin{proof}
The existence of a unique solution of  \eqref{psi} is proven in  \cite{gonzalezmonneau}. Estimates \eqref{psiinfinity}  and \eqref{psi'infinity} are shown in \cite{mp2}.
\end{proof}
The results of Lemmas \ref{phiinfinitylem} and  \ref{psiinfinitylem} have been generalized  in \cite{cs, dpv,dfv,psv,pv} to the case 
when the fractional operator is $-(-\Delta)^s$ for any $s\in(0,1)$.

For $\psi$ solution of ~\eqref{psi} and $b\in\{-1,1\}$, $z\in\R$,  we define
\beq\label{psibi}
\psi(z,b):=\psi(bz)\eeq
\subsection{Definition of viscosity solution}\label{viscositysec}
We first recall the definition of viscosity solution for a general
first order non-local equation 
\beq\label{generalpbbdd}
\partial_tu=F(t,x,u,\partial_xu,\I[u])\quad\text{in}\quad (0,+\infty)\times\Om
\eeq
where $\Om$ is an open subset of $\R$ and $F(t,x,u,p,L)$ is continuous and 
non-decreasing in $L$. 
\begin{de}\label{defviscositybdd}A function $u\in USC_b((0,+\infty)\times\R)$ (resp., $u\in LSC_b((0,+\infty)\times\R)$) is a
viscosity subsolution (resp., supersolution) of
\eqref{generalpbbdd}  if for any $(t_0,x_0)\in(0,+\infty)\times\Om$, 
and any test function $\varphi\in
C_b^2((0,+\infty)\times\R)$ such that $u-\varphi$ attains a global maximum
(resp., minimum) at the point $(t_0,x_0)$, 
then 
\beqs\begin{split}&\p_t\varphi(t_0,x_0)-F(t_0,x_0,u(t_0,x_0),\partial_x\varphi(t_0,x_0),\I[\varphi(t_0,\cdot)](x_0))\leq0\\&\text{(resp., }\geq 0).\end{split}\eeqs  
A function $u\in
C_b((0,+\infty)\times\R)$ is a viscosity solution of
\eqref{generalpb} if it is a viscosity sub and supersolution
of \eqref{generalpbbdd}.
\end{de}
\begin{rem}
It is classical that 
the maximum (resp., the minimum) in Definition \ref{defviscositybdd}   can be assumed to be strict  and that
$$\varphi(t_0,x_0)=u(t_0,x_0).$$
This will be used later.
\end{rem}
Next, let us  consider the initial value problem
\begin{equation}\label{generalpb}
\begin{cases}
\partial_tu=F(t,x,u,\partial_xu,\I[u])&\text{in}\quad (0,+\infty)\times\R\\
u(0,x)=u_0(x)& \text{on}\quad \R,
\end{cases}
\end{equation} where  $u_0$  is a continuous function.

\begin{de}\label{defviscosity}A function $u\in USC_b([0,+\infty)\times\R)$ (resp., $u\in LSC_b([0,+\infty)\times\R)$) is a
viscosity subsolution (resp., supersolution) of the initial value problem 
\eqref{generalpb} if $u(0,x)\leq (u_0)(x)$ (resp., $u(0,x)\geq
(u_0)(x)$) and $u$ is viscosity subsolution (resp., supersolution)  of the equation 
$$\partial_tu=F(t,x,u,\partial_xu,\I[u])\quad\text{in}\quad (0,+\infty)\times\R.$$
 A function $u\in
C_b([0,+\infty)\times\R)$ is a viscosity solution of
\eqref{generalpb} if it is a viscosity sub and supersolution
of \eqref{generalpb}.
\end{de}
It is a  known  result that smooth solutions are also viscosity solutions. We provide here the proof for completeness. 
\begin{prop}
If $u\in C^1((0,+\infty);C^{1,\beta}_{loc}(\Om)\cap L^\infty(\R))$ for some $0<\beta\le 1$,  and $u$ satisfies pointwise 
\begin{equation}\label{eqlemmasmoothviscosity}\partial_tu- F(t,x,u,\partial_xu,\I[u])\leq 0 \text{ (resp., $\geq0$)}\quad\text{in}\quad (0,+\infty)\times\Om,\end{equation}then 
$u$ is a viscosity subsolution (resp., supersolution) of \eqref{generalpbbdd}.
\end{prop}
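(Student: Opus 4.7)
The plan is to verify the three conditions of Definition \ref{defviscositybdd} at an arbitrary global maximum point of $u-\varphi$, translating the pointwise information into a viscosity one via the max property and the monotonicity of $F$ in the last slot. I will treat only the subsolution case; the supersolution case is symmetric.

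\textbf{Step 1: Setup.} Fix $(t_0,x_0)\in(0,+\infty)\times\Om$ and a test function $\varphi\in C_b^2((0,+\infty)\times\R)$ such that $u-\varphi$ attains a global maximum at $(t_0,x_0)$, which by the remark following Definition \ref{defviscositybdd} we may assume to be strict and to satisfy $\varphi(t_0,x_0)=u(t_0,x_0)$. From the regularity assumption $u\in C^1((0,+\infty);C^{1,\beta}_{loc}(\Om)\cap L^\infty(\R))$, the functions $t\mapsto u(t,x_0)-\varphi(t,x_0)$ and $x\mapsto u(t_0,x)-\varphi(t_0,x)$ are both differentiable at their maximum point, hence
\beqs
\partial_t u(t_0,x_0)=\partial_t\varphi(t_0,x_0),\qquad \partial_x u(t_0,x_0)=\partial_x\varphi(t_0,x_0).
\eeqs

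\textbf{Step 2: Comparison of the nonlocal terms.} Both $\I[u(t_0,\cdot)](x_0)$ and $\I[\varphi(t_0,\cdot)](x_0)$ are well defined: for $\varphi$ this is immediate from $\varphi\in C_b^2$, and for $u$ it follows from $u(t_0,\cdot)\in C^{1,\beta}_{loc}$ near $x_0$ together with $u(t_0,\cdot)\in L^\infty(\R)$ (split the integral as in \eqref{i1v}-\eqref{i2v}). Setting $w(y):=u(t_0,y)-\varphi(t_0,y)$, we have $w\leq 0$ on $\R$ with $w(x_0)=0$ and $w'(x_0)=0$, so $w(y)-w(x_0)=w(y)\leq 0$ for all $y$ and the quantity $w(y)/(y-x_0)^2$ is integrable near $x_0$ thanks to $C^{1,\beta}$ regularity of $w$. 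By linearity of $\I$ (which is justified since the principal values for $u$ and for $\varphi$ exist separately),
\beqs
\I[u(t_0,\cdot)](x_0)-\I[\varphi(t_0,\cdot)](x_0)=\frac{1}{\pi}\int_{\R}\frac{w(y)}{(y-x_0)^2}\,dy\leq 0.
\eeqs

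\textbf{Step 3: Conclusion via monotonicity of $F$.} Evaluating the pointwise inequality \eqref{eqlemmasmoothviscosity} at $(t_0,x_0)$, substituting the equalities from Step 1, and using that $F(t,x,u,p,L)$ is non-decreasing in $L$ together with the inequality from Step 2, we obtain
\beqs
\partial_t\varphi(t_0,x_0)-F\bigl(t_0,x_0,u(t_0,x_0),\partial_x\varphi(t_0,x_0),\I[\varphi(t_0,\cdot)](x_0)\bigr)\leq\partial_t u(t_0,x_0)-F\bigl(t_0,x_0,u(t_0,x_0),\partial_x u(t_0,x_0),\I[u(t_0,\cdot)](x_0)\bigr)\leq 0,
\eeqs
which is exactly the subsolution inequality required by Definition \ref{defviscositybdd}.

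The only subtle point, and the one worth double-checking in a full write-up, is the rigorous justification of the linearity step in Step 2, namely that the existence of $\I[u(t_0,\cdot)](x_0)$ and $\I[\varphi(t_0,\cdot)](x_0)$ as principal values permits writing their difference as a genuine (absolutely convergent) integral of $w/(y-x_0)^2$; this is where the $C^{1,\beta}$ regularity of $u$ at $x_0$ is used crucially to make the numerator $o(|y-x_0|)$ with vanishing first-order term so that the singularity at $y=x_0$ is harmless.
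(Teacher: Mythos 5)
Your proof is correct and follows essentially the same route as the paper's: derivative equalities at the maximum, the ordering $\I[u(t_0,\cdot)](x_0)\leq \I[\varphi(t_0,\cdot)](x_0)$ obtained from $u(t_0,\cdot)-u(t_0,x_0)\leq\varphi(t_0,\cdot)-\varphi(t_0,x_0)$, and monotonicity of $F$ in the nonlocal slot. The extra care you devote in Step 2 to the well-definedness of $\I[u(t_0,\cdot)](x_0)$ and the absolute convergence of $\int w(y)/(y-x_0)^2\,dy$ is a sensible piece of rigor the paper leaves implicit, but it does not constitute a different approach.
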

\begin{proof} Assume that $u$ satisfies \eqref{eqlemmasmoothviscosity}. Fix  any $(t_0,x_0)\in(0,+\infty)\times\Om$, 
and let  $\varphi\in
C_b^2((0,+\infty)\times\R)$ be such that $u-\varphi$ attains a global maximum
at the point $(t_0,x_0)$. 
Then  $\partial_t u(t_0,x_0)= \partial_t \varphi(t_0,x_0)$, $\partial_x u(t_0,x_0)= \partial_x \varphi(t_0,x_0)$ and since $u(t_0,x)-u(t_0,x_0)\leq \varphi(t_0,x)-\varphi(t_0,x_0)$, we have that 
$$\I[u(t_0,\cdot)](x_0)]\leq \I[\varphi(t_0,\cdot)](x_0)].$$
Therefore, using that $F(t,x, u,p,\cdot)$ is non-decreasing, we get 
\begin{equation*}\begin{split}&\partial_t\varphi(t_0,x_0)-F(t_0,x_0, u(t_0,x_0), \partial_x\varphi(t_0,x_0), \I[\varphi(t_0,\cdot)](x_0)])\\&\leq 
\partial_t u(t_0,x_0)-F(t_0,x_0, u(t_0,x_0), \partial_x u(t_0,x_0), \I[u(t_0,\cdot)](x_0)])\\&\leq 0,\end{split}\end{equation*}
as desired. 
\end{proof}

\subsection{Comparison principle and existence results}
In this subsection, we successively give com\-pa\-ri\-son
principles and existence results for \eqref{uepeq} and
\eqref{ubareq}. The following comparison theorem is shown in
\cite[Theorem 3.1]{jk} for more general parabolic integro-PDEs.
\begin{prop}[Comparison Principle for \eqref{uepeq}]\label{comparisonuep} Consider
 $u\in USC_b([0,+\infty)\times\R)$ subsolution
and $v\in LSC_b([0,+\infty)\times\R)$ supersolution of \eqref{uepeq},
then $u\leq v$ on $[0,+\infty)\times\R$.
\end{prop}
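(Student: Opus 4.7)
The statement is cited from \cite[Theorem 3.1]{jk}, so the plan is simply to explain how the doubling of variables argument of that reference specializes to \eqref{uepeq}. Fix $T>0$ and argue by contradiction, supposing $M:=\sup_{[0,T]\times\R}(u-v)>0$. The structural ingredients we can exploit are: the nonlocal operator $\I$ appears linearly with the correct sign (degenerate ellipticity/monotonicity in $L$), and the zeroth-order term $-\delta^{-1}W'(u/\ep)$ is Lipschitz in $u$ with constant $\|W''\|_\infty/(\delta\ep)$ by \eqref{Wass}. The Lipschitz part can be absorbed at the end either by the exponential rescaling $\tilde u=e^{-\lambda t}u$ for $\lambda$ large, or, more simply, by first proving the bound on a short interval $[0,T_0]$ with $T_0<\delta^2\ep/\|W''\|_\infty$ and then iterating.

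Next I would introduce the doubled penalization
\beq\label{plan:doubling}
\Phi_{\alpha,\eta,\gamma}(t,x,s,y)=u(t,x)-v(s,y)-\frac{|x-y|^2}{2\alpha}-\frac{(t-s)^2}{2\alpha}-\frac{\eta}{T-t}-\gamma\chi(x,y),
\eeq
where $\chi(x,y)=\sqrt{1+x^2}+\sqrt{1+y^2}$ serves as a bounded-derivative compactification. Because $u,v\in L^\infty$ and $\chi\to\infty$ at infinity, upper/lower semicontinuity gives a maximizer $(\ts,\xs,\hat s,\hat y)$. Standard penalization estimates yield $\gamma\chi(\xs,\hat y)\to0$ as $\gamma\to0$ and $|\xs-\hat y|^2/\alpha,\,(\ts-\hat s)^2/\alpha\to0$ along a subsequence $\alpha\to0$. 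For $\eta,\gamma$ small the supremum stays $\geq M/2>0$; combined with the ordering $u(0,\cdot)\leq u_0\leq v(0,\cdot)$ from the subsolution/supersolution definition and with the blow-up of $\eta/(T-t)$ at $t=T$, the maximum points $\ts,\hat s$ lie in the open interval $(0,T)$.

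Then I would apply the viscosity inequalities for $u$ and $v$ with the natural test functions built from \eqref{plan:doubling}. The crucial step is the handling of the nonlocal term: for $r>0$ small, one uses the equivalent viscosity formulation in which $\I$ is decomposed as $\I^{1,r}+\I^{2,r}$. The short-range piece $\I^{1,r}$ is evaluated on the smooth test function and bounded by $Cr(\alpha^{-1}+\gamma)$, using the second-derivative estimate from \eqref{I11boundprelim} applied to the quadratic and $\chi$-pieces. The long-range piece $\I^{2,r}$ is evaluated on the actual functions $u$ and $v$, and its difference
\beqs
\I^{2,r}[u(\ts,\cdot)](\xs)-\I^{2,r}[v(\hat s,\cdot)](\hat y)
\eeqs
is estimated by using the global maximum property $\Phi_{\alpha,\eta,\gamma}\leq \Phi_{\alpha,\eta,\gamma}(\ts,\xs,\hat s,\hat y)$ to translate pointwise values into a controlled error of order $O(\gamma/r)$ plus a term that tends to $0$ with $\gamma$. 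Subtracting the two viscosity inequalities produces
\beqs
\frac{\delta\eta}{(T-\ts)^2}\leq O\!\left(\frac{r}{\alpha}+\gamma r+\frac{\gamma}{r}\right)+\frac{1}{\delta}\bigl|W'(u(\ts,\xs)/\ep)-W'(v(\hat s,\hat y)/\ep)\bigr|,
\eeqs
and the Lipschitz $W'$ term is absorbed by the exponential/short-time device mentioned above.

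Finally one sends the parameters to zero in the order $r=\alpha^{1/2}$, then $\gamma\to 0$, then $\alpha\to 0$, then $\eta\to 0$, reaching the contradiction $0\leq -\eta/T^2<0$. The main technical obstacle I expect is the balance of the penalization $\chi$ against the half-Laplacian: quadratic compactifications (the simplest choice) are not admissible because $x^2$ is not in the domain of $\I$, so the subquadratic weight $\sqrt{1+x^2}$ (which has bounded $\I$-action) is the sharpest tool. The combined control of this weight, the localization radius $r$, and the Lipschitz constant of $W'$ is exactly what the framework of \cite{jk} is designed to handle, which is why the result follows from that reference without any modification specific to \eqref{uepeq}.
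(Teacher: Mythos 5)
The paper itself gives no proof of this proposition: it simply cites \cite[Theorem 3.1]{jk}, so your decision to lean on that reference is entirely consistent with the paper. The skeleton of your doubling argument (penalization, $r$-decomposition of $\I$ with short range on the test function and long range on $u,v$, absorption of the Lipschitz zeroth-order term via exponential rescaling or a short-time iteration) is the right machinery and matches what \cite{jk} does.

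However, the explanatory sketch contains a genuine error in exactly the place you flag as the ``main technical obstacle.'' Your claim that $\chi_0(x):=\sqrt{1+x^2}$ ``has bounded $\I$-action'' is false for the order-one operator considered here. For the half-Laplacian $\I=-(-\Delta)^{1/2}$, the Lévy measure is $\nu(dz)=|z|^{-2}\,dz$ and the admissibility condition $\int_{|z|>1}|v(x+z)-v(x)|\,|z|^{-2}\,dz<\infty$ forces strictly sublinear growth. Since $\chi_0(x+z)+\chi_0(x-z)-2\chi_0(x)\sim 2|z|$ as $|z|\to\infty$, one has $\int_{|z|>r}\bigl(\chi_0(x+z)+\chi_0(x-z)-2\chi_0(x)\bigr)|z|^{-2}\,dz=+\infty$; by convexity there is no cancellation, so $\I[\chi_0]\equiv+\infty$ and, in particular, the long-range estimate ``$O(\gamma/r)$'' you invoke is not available. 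The standard fix is to use $\chi(x,y)=(1+x^2)^{\beta/2}+(1+y^2)^{\beta/2}$ with $0<\beta<1$; this has $\I[(1+\cdot^2)^{\beta/2}]$ bounded, and the rest of your scheme goes through. A secondary slip is the parameter coupling $r=\alpha^{1/2}$: since the short-range contribution of the test function is of order $r/\alpha$ (coming from the quadratic penalization), this choice makes that term blow up like $\alpha^{-1/2}$ as $\alpha\to 0$. In the Barles--Imbert/Jakobsen--Karlsen formulation one should send $r\to 0$ first for fixed $\alpha,\gamma$ (the subsolution inequality holds for every $r>0$), and only then send $\gamma\to0$ and $\alpha\to 0$.
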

Following \cite{jk} it can  also be proven the comparison
principle for \eqref{uepeq} in bounded domains. Since we deal with a
non-local equation, we need to compare the sub and the
supersolution everywhere outside the domain.
\begin{prop}[Comparison Principle on bounded domains for
\eqref{uepeq}]\label{comparisonbounded} Let $\Om$ be an open interval 
 of $\R$ and let $u\in USC_b([t_1,t_2]\times\R)$
and $v\in LSC_b([t_1,t_2]\times\R)$ be respectively a sub and a
supersolution of $$\delta\p_{t}
u=\I[u(t,\cdot)]-\frac{1}{\delta}W'\left(\frac{u}{\epsilon}\right)\quad\text{in } (t_1,t_2)\times \Om,
$$
for some $0\leq t_1<t_2$. If
$u\leq v$ on  $(\{t_1\}\times \Om)\cup([t_1,t_2]\times (\R\setminus\Om))$, then $u\leq v$ in $[t_1,t_2]\times \R$. 
\end{prop}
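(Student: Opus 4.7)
I would follow the Jakobsen--Karlsen parabolic viscosity comparison machinery used for Proposition \ref{comparisonuep}, adapted to a bounded spatial domain in the way classical for local equations. Argue by contradiction, supposing
$$M := \sup_{[t_1,t_2]\times\R}(u-v) > 0.$$
Because $u \leq v$ on $(\{t_1\}\times\Omega) \cup ([t_1,t_2]\times(\R\setminus\Omega))$, any maximizing sequence must concentrate at a point of $(t_1,t_2]\times\Omega$. After the penalization $u_\kappa(t,x) := u(t,x) - \kappa/(t_2 - t)$ (which pushes any maximizer strictly away from $t = t_2$) combined with an exponential-in-time rescaling that renders $F$ non-increasing in its $u$-slot -- needed because the nonlinearity $-\tfrac{1}{\delta^2}W'(u/\epsilon)$ is merely Lipschitz, with constant $C/(\delta^2 \epsilon)$ -- one still has $M_\kappa > 0$ for small $\kappa$, and the supremum can only be attained at some interior point $(\hat t,\hat x)\in (t_1,t_2)\times\Omega$.

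\textbf{Doubling of variables.} For $\alpha,\beta > 0$ consider
$$\Phi_{\alpha,\beta}(t,s,x,y) := u_\kappa(t,x) - v(s,y) - \tfrac{1}{2\alpha}(x-y)^2 - \tfrac{1}{2\beta}(t-s)^2$$
on $([t_1,t_2]\times\R)^2$. By upper semicontinuity and boundedness, $\Phi_{\alpha,\beta}$ attains its supremum at some point $(t^*,s^*,x^*,y^*)$; standard arguments yield $t^*,s^* \to \hat t \in (t_1,t_2)$ and $x^*,y^* \to \hat x$ in the interior of $\Omega$, together with $|x^*-y^*|^2/\alpha, |t^*-s^*|^2/\beta \to 0$. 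Hence for $\alpha,\beta$ small $t^*,s^*$ lie in $(t_1,t_2)$ and $x^*,y^*$ in a compact subset of $\Omega$.

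\textbf{Viscosity inequalities.} Apply the subsolution property for $u_\kappa$ with test function $\varphi(t,x) := v(s^*,y^*) + (x-y^*)^2/(2\alpha) + (t-s^*)^2/(2\beta)$ and the analogous supersolution test function for $v$. The delicate point with the fractional operator is that these test functions are local, while $\I$ sees the whole real line: for each small $r>0$ one splits $\I = \I^{1,r} + \I^{2,r}$ and patches the test functions outside $B_r(x^*)$, $B_r(y^*)$ by $u_\kappa(t^*,\cdot)$ and $v(s^*,\cdot)$ themselves. This produces globally admissible extensions thanks to the maximality property of $\Phi_{\alpha,\beta}$. The short-range integrals $\I^{1,r}$ of the quadratic test function are bounded by $Cr/\alpha$ via \eqref{I11boundprelim}, while the long-range difference $\I^{2,r}[u_\kappa(t^*,\cdot)](x^*) - \I^{2,r}[v(s^*,\cdot)](y^*)$ is shown to be $o_\alpha(1)$ using the global maximality in the doubled variables together with the crucial boundary hypothesis $u \leq v$ on $\R \setminus \Omega$. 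The time-derivative terms cancel except for the strictly negative gap $\kappa\delta/(t_2-t^*)^2$ produced by the $\kappa$-penalization, and the $W'$ difference has a favorable sign after the exponential rescaling. Sending first $r \to 0$, then $\alpha,\beta \to 0$, then $\kappa \to 0$ produces a contradiction.

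\textbf{Main obstacle.} The genuinely nontrivial step, compared with the classical local Crandall--Ishii scheme and with Proposition \ref{comparisonuep} on the whole line, is precisely the control of the long-range tail of $\I$ in the doubling-of-variables step: since the nonlocal operator probes values outside $\Omega$, one must justify that the patched extensions of the local quadratic test functions are admissible competitors in the global viscosity definition \ref{defviscositybdd}, and that the long-range integral difference vanishes in the limit. This is where the assumption $u \leq v$ on $[t_1,t_2]\times(\R\setminus\Omega)$ is indispensable, and once it is invoked carefully following \cite{jk} the remainder of the argument is the standard parabolic contradiction.
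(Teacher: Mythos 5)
The paper does not actually supply a proof of Proposition~\ref{comparisonbounded}: it simply remarks that, following the Jakobsen--Karlsen techniques of~\cite{jk} used for Proposition~\ref{comparisonuep}, the bounded-domain version can be proved in the same way, provided one compares $u$ and $v$ everywhere outside $\Omega$. Your outline is precisely that standard adaptation, and it is essentially correct, so in this sense you and the paper are ``taking the same route.''

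One small imprecision worth flagging, since you single it out as the main obstacle. After doubling variables, the long-range difference
$\I^{2,r}[u_\kappa(t^*,\cdot)](x^*) - \I^{2,r}[v(s^*,\cdot)](y^*)$
is not merely $o_\alpha(1)$; it is nonpositive for every fixed $r$ and $\alpha$, because the penalty $\tfrac{1}{2\alpha}(x-y)^2$ is invariant under the common shift $(x,y)\mapsto(x+z,y+z)$, so global maximality of $\Phi_{\alpha,\beta}$ gives
$u_\kappa(t^*,x^*+z)-v(s^*,y^*+z)\le u_\kappa(t^*,x^*)-v(s^*,y^*)$ for all $z\in\R$,
hence the integrand of the long-range difference has a sign. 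Crucially, this sign does \emph{not} use the boundary hypothesis $u\le v$ on $\R\setminus\Omega$; that hypothesis (together with $u\le v$ at $t=t_1$ and the $\kappa$-penalization near $t=t_2$) is what forces the limit point $(\hat t,\hat x)$ of the doubled maximizers to lie in $(t_1,t_2)\times\Omega$, so that the viscosity inequalities can be invoked at all. Keeping these two roles separate makes the argument cleaner. With that adjustment, and with the usual care that the patched test functions are handled via the equivalent ``test function inside a ball, $u$ and $v$ outside'' formulation of nonlocal viscosity solutions rather than literally plugging the semicontinuous $u_\kappa$, $v$ into Definition~\ref{defviscositybdd}, your proposal is a correct account of what~\cite{jk} gives.
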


\begin{prop}[Existence for \eqref{uepeq}]\label{existuep}For $\ep,\,\delta>0$ there exists
$u^{\ep}\in C_b([0,+\infty)\times\R)$ (unique) viscosity solution of
\eqref{uepeq}. 
\end{prop}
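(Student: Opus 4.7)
The plan is to apply Perron's method, taking the comparison principle of Proposition \ref{comparisonuep} as the input. The first step is to construct simple global sub- and supersolutions that match the initial datum. Since $u_0\in C^{1,1}(\R)\cap L^\infty(\R)$, the estimate \eqref{boundfractionalpreliminaries} gives $\|\I[u_0]\|_\infty<+\infty$; since $W$ is $1$-periodic and $C^{2,\beta}$, $\|W'\|_\infty<+\infty$. Setting
\[
M := \frac{1}{\delta}\|\I[u_0]\|_\infty+\frac{1}{\delta^{2}}\|W'\|_\infty,
\]
the functions $u^\pm(t,x):=u_0(x)\pm Mt$ lie in $C_b([0,+\infty)\times\R)$ and satisfy, pointwise in $x$,
\[
\delta\,\partial_t u^\pm-\I[u^\pm]+\frac{1}{\delta}W'\!\left(\frac{u^\pm}{\ep}\right)=\pm\delta M-\I[u_0]+\frac{1}{\delta}W'\!\left(\frac{u_0\pm Mt}{\ep}\right),
\]
which is $\ge 0$ for $u^+$ and $\le 0$ for $u^-$ by the choice of $M$. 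Since $u_0\in C^{1,1}(\R)$, $\I[u^\pm]$ is defined classically, so $u^+$ is a classical (hence viscosity) supersolution and $u^-$ a classical subsolution of the Cauchy problem \eqref{uepeq}.

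I would then introduce the Perron envelope
\[
u(t,x):=\sup\bigl\{\,w(t,x):\ w\in USC_b([0,+\infty)\times\R),\ w\ \text{subsolution of }\eqref{uepeq},\ u^-\le w\le u^+\,\bigr\}.
\]
By the standard Perron machinery adapted to nonlocal parabolic equations, see \cite{jk}, the upper semicontinuous envelope $u^*$ is a subsolution of the equation, and a bump argument shows that the lower semicontinuous envelope $u_*$ is a supersolution. The sandwich $u^-\le u_*\le u^*\le u^+$ combined with $u^\pm(0,\cdot)=u_0$ forces $u_*(0,\cdot)=u^*(0,\cdot)=u_0$ in the sense of Definition \ref{defviscosity}. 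The comparison principle (Proposition \ref{comparisonuep}) applied to $u^*$ and $u_*$ yields $u^*\le u_*$; together with the reverse inequality coming from the definition of the envelopes, this gives $u_*=u^*=u$. Hence $u\in C_b([0,+\infty)\times\R)$ is a viscosity solution of \eqref{uepeq}, and uniqueness is another direct application of Proposition \ref{comparisonuep}.

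The only genuinely delicate step is the classical adaptation of the Perron bump construction to the nonlocal setting: the local perturbation used to produce a strict subsolution at a putative failure point must be chosen so that its nonlocal tail, computed through $\I$, does not destroy the strict inequality. This is precisely the technical content handled in \cite{jk} for general parabolic integro-PDEs, and we simply appeal to that framework. All other ingredients, namely the barriers $u^\pm$ and the matching of the initial condition, are supplied by the bounds on $\I[u_0]$ and $W'$ used above.
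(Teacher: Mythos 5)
Your proof is correct and follows essentially the same route as the paper: construct the barriers $u^{\pm}(t,x)=u_0(x)\pm Mt$ using the bound \eqref{boundfractionalpreliminaries} on $\I[u_0]$ and the boundedness of $W'$, then invoke Perron's method together with the comparison principle of Proposition \ref{comparisonuep}. You spell out the Perron envelope and bump-construction steps, which the paper leaves implicit, but the underlying argument is the same.
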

\dim 
We can construct a
solution by Perron's method if we construct sub and supersolutions
of \eqref{uepeq} which are equal to $u_0(x)$ at $t= 0$. Since $u_0\in C^{1,1}(\R)$, by \eqref{boundfractionalpreliminaries} the two functions
$u^{\pm}(t,x):=u_0(x){\pm} C t$ are respectively a super and a
subsolution of \eqref{uepeq}, if
$$C\geq \frac{4}{\pi\delta}\|u_0\|_{C^{1,1}(\R)}+\frac{1}{\delta^2}\|W'\|_\infty.$$
Moreover $u^+(0,x)=u^-(0,x)=u_0(x)$.
\finedim We next recall the
comparison and the existence results for \eqref{ubareq}, see e.g. \cite{imr}, Proposition 3.
\begin{prop}\label{existHeff}
If $u\in USC_b([0,+\infty)\times\R)$ and  $v\in LSC_b([0,+\infty)\times\R)$
are respectively a sub and a supersolution of  \eqref{ubareq}, 
then $u\leq v$ on $(0,+\infty)\times\R$. Moreover, under assumption \eqref{u^0ass},  there exists a
(unique) viscosity solution of \eqref{ubareq}.
\end{prop}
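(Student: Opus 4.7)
The plan is to treat the two assertions separately, following the now-standard theory of viscosity solutions for nonlocal first-order equations in the spirit of Barles--Imbert and Jakobsen--Karlsen, which applies because the Hamiltonian
\[
F(p,L) := c_0|p|L
\]
is continuous, geometric (homogeneous of degree one in $p$), and non-decreasing in the nonlocal argument $L$.

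For the comparison principle, I would argue by contradiction using doubling of variables adapted to the nonlocal setting. Assume that $m := \sup_{[0,+\infty)\times\R}(u-v) > 0$ and, for small parameters $\alpha,\eta,\beta>0$, introduce the auxiliary function
\[
\Phi(t,s,x,y) = u(t,x) - v(s,y) - \frac{|x-y|^2}{2\alpha} - \frac{(t-s)^2}{2\alpha} - \frac{\eta}{T-t} - \beta(|x|^2 + |y|^2),
\]
where $T$ is chosen so that $\Phi>0$ somewhere for $\eta,\beta$ small. The boundedness of $u,v$ in $USC_b,LSC_b$ ensures that a maximum point $(\bar t,\bar s,\bar x,\bar y)$ exists, and classical estimates show that $(\bar x - \bar y)/\alpha$ and $(\bar t - \bar s)/\alpha$ remain bounded as $\alpha \to 0$, while $\bar t,\bar s$ stay away from $0$ (since $u(0,\cdot)\le u_0\le v(0,\cdot)$ kills the contribution from the initial time). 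At this point I would apply the nonlocal Jensen--Ishii lemma, splitting $\mathcal{I}_1 = \mathcal{I}_1^{1,r} + \mathcal{I}_1^{2,r}$: the short-range part is controlled by the $C^{1,1}$-type information given by the test function, while the long-range part is estimated using the global maximum property of $\Phi$. Subtracting the subsolution and supersolution inequalities, one obtains a bound of the form $\eta/T^2 \le \omega(\alpha) + \omega(\beta) + c_0|\bar p|\cdot o_r(1)$, where $\bar p = (\bar x - \bar y)/\alpha$, and the monotonicity of $F$ in $L$ is crucial to absorb the nonlocal terms. Sending $\alpha\to 0$, $\beta\to 0$ and then $r\to 0$ yields $\eta \le 0$, the required contradiction.

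For existence, I would invoke Perron's method once suitable global sub- and supersolutions with the correct initial trace have been produced. Since $u_0 \in C^{1,1}(\R)$ with finite limits at $\pm\infty$, \eqref{boundfractionalpreliminaries} gives $\|\mathcal{I}_1[u_0]\|_\infty < \infty$, and together with $\|\partial_x u_0\|_\infty<\infty$ we obtain a constant $C>0$ such that $c_0|\partial_x u_0(x)|\,|\mathcal{I}_1[u_0](x)| \le C$ for all $x\in\R$. A direct computation then shows that the functions $u^{\pm}(t,x) := u_0(x) \pm Ct$ are respectively a classical, hence viscosity (by the smooth-implies-viscosity proposition already stated in the excerpt), supersolution and subsolution of \eqref{ubareq}, and both agree with $u_0$ at $t=0$. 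The standard Perron machinery then produces $\bar u(t,x) := \sup\{w(t,x) : w \text{ is a subsolution with } u^-\le w \le u^+\}$, whose upper and lower semicontinuous envelopes are sub- and supersolutions of \eqref{ubareq} sandwiched between $u^-$ and $u^+$; the comparison principle of the first part forces them to coincide, giving the desired continuous viscosity solution. Uniqueness is immediate from comparison.

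The main obstacle will be the comparison step, specifically the handling of the product structure $|p|L$ when $\bar p$ is close to zero. At such points the equation degenerates and the usual estimates on the nonlocal term multiplied by $|\bar p|$ need to be carried out carefully; the fact that $F$ is geometric (vanishing when $\partial_x u = 0$) is exactly what allows the argument to close, and this is precisely the structural feature exploited in \cite{imr}, whose Proposition~3 I would cite for the technical details rather than re-derive them in full.
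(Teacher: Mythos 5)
The paper does not actually prove this proposition: the surrounding text reads ``We next recall the comparison and the existence results for \eqref{ubareq}, see e.g. \cite{imr}, Proposition 3,'' so the result is imported wholesale from Imbert--Monneau--Rouy and no proof is given. Your proposal, by contrast, reconstructs how such a proof would go, and the overall architecture you sketch (doubling of variables with a time-barrier term $\eta/(T-t)$, splitting $\I=\I^{1,r}+\I^{2,r}$, invoking a nonlocal Jensen--Ishii lemma, Perron's method with barriers $u_0\pm Ct$) is indeed the standard one for this class of geometric nonlocal Hamilton--Jacobi equations, and the identification of $F(p,L)=c_0|p|L$ as continuous, geometric and non-decreasing in $L$ is the correct structural input. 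The closing paragraph of your proposal correctly flags the real difficulty (degeneracy at $\partial_x u=0$) and defers the technical details back to \cite{imr}, which is what the paper itself does.

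One technical point in your sketch would need repair if you carried it out in full: the localization term $\beta(|x|^2+|y|^2)$ is incompatible with the half-Laplacian, since $\I[|x|^2]$ is not finite; for an operator of order one the penalization must grow strictly sublinearly (or be bounded, e.g. $\beta\langle x\rangle^\gamma$ with $\gamma<1$, or a bounded smooth cutoff), and the test functions in Definition \ref{defviscositybdd} are required to lie in $C_b^2$, so an unbounded penalization cannot be fed directly into the nonlocal term. This is a well-known and fixable detail, and since you ultimately propose to cite \cite{imr} for the technicalities the proposal as a whole is sound; but as written the auxiliary function $\Phi$ would not allow the doubling argument to close.
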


\section{Strategy and heuristic  proofs}\label{strategysec}
In this section we  explain the steps that we will follow  to prove Theorem \ref{mainthm}
 and the heuristics of the main proofs. 
 \subsection{Approximation of $\I$}
 The first result is a discrete  approximation formula for the fractional Laplace $\I$. 
Let $v$ be any function satisfying \eqref{vass}. Let $x_i$ and $b_i\in\{-1,1\}$, $i=1,\ldots, N_\ep$,  be defined as in \eqref{xi} and \eqref{bidef} respectively. 
Then,   we show (see Proposition \ref{apprIcor} and Proposition \ref{apprIcorall}) that for any fixed $i_0\in \{1,\ldots, N_\ep\}$, 
\beq\label{heuristicI1xi0}\I[v](x_{i_0})\simeq\frac{1}{\pi}\sum_{i\not=i_0} \dfrac{b_i\ep }{x_i-x_{i_0}},\eeq
 and 
 for any $x$,  
$$ \I[v](x)\simeq \frac{1}{\pi}\sum_{\stackrel{}{|x_i-x|> r}} \dfrac{b_i\ep}{x_i-x}, $$ for some $r=o_\ep(1)$, 
where  the error goes to 0 as $\ep\to0$  uniformly over $\R$. On the other hand,  the sum 
$$\sum_{\stackrel{i\neq i_0}{|x_i-x|\leq r}} \dfrac{b_i\ep}{x_i-x},$$ where $x_{i_0}$ is the closest particle to $x$ may not be zero but depends on   the distance of $x$ from 
$x_{i_0}$ (see Lemma \ref{lemmaerrorshortdistance}).
For the proof of these results we follow the proof of the analogous results given in \cite{patsan} in the case $v$ is monotone non-decreasing, i.e. $b_i=1$ for all $i$.
We refer to Section 2.1 there for the heuristic of \eqref{heuristicI1xi0} in the monotone case.

\subsection{Approximation of $v$}\label{heurconsub}
Let $\phi(x,b_i)$ be defined as in \eqref{phibi}. Then, we show (see Proposition \ref{approxpropfinal}) that any function $v$  satisfying \eqref{vass} can be approximated 
in $L^\infty(\R)$ as follows,
 \beq\label{vappintr}v(x)\simeq \sum_{i=1}^{N_\ep}\ep\phi\left(\frac{x-x_i}{\ep\delta},b_i\right),\eeq
with $x_i$ and $b_i\in\{-1,1\}$, $i=1,\ldots, N_\ep$,  defined as in \eqref{xi} and \eqref{bidef} respectively.
We refer to Section 2.2 in \cite{patsan} for the heuristic proof of \eqref{vappintr} in the case $v$ monotone non-decreasing.
\subsection{Heuristic of the proof of Theorem \ref{mainthm}}\label{heuristicsec} 
Let $u$ be the limit solution (that here we suppose  to exist and  be smooth). Fix a point $(t_0,x_0)\in(0,+\infty)\times\R$.
We need to distinguish two cases: $\partial_x u(t_0,x_0)\neq 0$ and $\partial_x u(t_0,x_0)=0$.

\medskip
\noindent{\bf Case 1: $\partial_x u(t_0,x_0)\neq 0$.}

We are going to give  an ansatz for $u^\ep$ in a small box $Q_R$ of size $R$ centered 
at $(t_0,x_0)$.  
 For small $R$,  all the derivatives of $u$ can be considered constant in $Q_R$:
$$\partial_t u(t,x)\simeq \partial_t u(t_0,x_0),\quad \partial_x u(t,x)\simeq \partial_x u(t_0,x_0)$$
and 
$$\I[u(t,\cdot)](x)\simeq \I[u(t_0,\cdot)](x_0)=: L_0.$$
For $t$ close to $t_0$, we define the points $x_i(t)$  as in \eqref{xi} and for $v=u(t,\cdot)$. Since $u$ is monotone in $Q_R$, the $b_i$ of the particles inside that box,  defined as in \eqref{bidef},  
have all the same value.   Moreover, for those points, by differentiating in $t$ the equation
\beq\label{xiintrou}u(t,x_i(t))=const.,\eeq
we get
\beqs \partial_t u(t,x_i(t))+\partial_xu(t,x_i(t))\dot{x}_i(t)=0,\eeqs
from which
\beq\label{xidotintro}\dot{x}_i(t)=-\frac{ \partial_t u(t,x_i(t))}{\partial_xu(t,x_i(t))}\simeq -\frac{ \partial_t u(t_0,x_0)}{\partial_xu(t_0,x_0)}. \eeq
Notice that since particles in the box have the same speed, they never collide there. 
Next we consider as   ansatz for  $u^\ep$ the approximation of $u$ given by  \eqref{vappintr} plus a small correction:
\beqs \Phi^\ep(t,x):= \sum_{i=1}^{N_\ep}\ep\left(\phi\left(\frac{x-x_i(t)}{\ep\delta},b_i\right)+\delta\psi\left(\frac{x-x_i(t)}{\ep\delta},b_i\right)\right),\eeqs
where $\phi(\cdot,b_i)$ is defined as in \eqref{phibi} and  $\psi(\cdot,b_i)$ as  in \eqref{psibi}, with $\psi$ the solution of     \eqref{psi} with $L=L_0$.
 For a detailed heuristic motivation of
this correction,
see Section 3.1 of \cite{gonzalezmonneau}.
By \eqref{vappintr}, $\Phi^\ep(t,x)\to u(t,x)$ as $\ep\to 0$.  Fix $(t,x)\in Q_R$ and let $x_{i_0}(t)$ be the closest point among  the $x_i(t)$'s to $x$ and $z_i(t)=(x-x_i(t))/(\ep\delta)$. Plugging into \eqref{uepeq}, we get  (see proof of \eqref{mainlem2} in Section \ref{convergence})
\beqs\begin{split} 0&=\delta\partial_t \Phi^\ep(t,x)-\I [\Phi^\ep(t,\cdot)](x)+\frac{1}{\delta}W'\left(\frac{\Phi_\ep(t,x)}{\ep}\right)
\\&\simeq 
-\phi'(z_{i_0})(b_{i_0}\dot{x}_{i_0}(t)+c_0L_0)+
(W''(\phi(z_{i_0})) - W''(0))\left ( \frac{1}{\delta}\sum_{\stackrel{}{i\not=i_0}}^{} \tilde{\phi}(z_i,b_i) -\frac{L_0}{\alpha}\right)
\end{split}
\eeqs
where $\tilde\phi(\cdot,b_i)=\phi(\cdot,b_i)-H(\cdot,b_i)$, with $H(\cdot,b_i)$  defined as in \eqref{hbi}. 
 Suppose for simplicity that 
$x=x_{i_0}(t)$, then 
by \eqref{phiinfinity} and \eqref{heuristicI1xi0}
\beqs \frac{1}{\delta}\sum_{\stackrel{}{i\not=i_0}}^{} \tilde{\phi}(z_i,b_i) -\frac{L_0}{\alpha}\simeq \frac{1}{\alpha\pi}\sum_{i\not=i_0} \dfrac{b_i\ep}{x_i-x_{i_0}}-\frac{L_0}{\alpha}\simeq 0. \eeqs
Since $\phi'>0$, we must have 
$$\dot{x}_{i_0}(t)\simeq-c_0b_{i_0}L_0 $$ that is, by \eqref{xidotintro},
$$ \partial_t u(t_0,x_0)\simeq c_0 b_{i_0} \partial_x u(t_0,x_0)\I[u(t_0,\cdot)](x_0)=c_0  |\partial_x u(t_0,x_0)|\I[u(t_0,\cdot)](x_0).$$
To formalize the argument we will construct from the ansatz local in space and time sub and supersolutions of \eqref{uepeq} to compare with $u^\ep$. 

Notice that if we define $$y_i(\tau):=\frac{x_i(\ep\tau)}{\ep}$$then the $y_i$'s  solve
\beqs \dot{y}_i(\tau)=\dot{x}_i(\ep\tau)\simeq -c_0b_i L_0 \simeq \frac{c_0}{\pi}\sum_{j\not=i} \dfrac{b_ib_j\ep}{x_i-x_{j}}=\frac{c_0}{\pi}\ \sum_{j\not=i} \dfrac{b_ib_j}{y_i-y_{j}},\eeqs
which is the discrete dislocations dynamics given in \eqref{DDD}.

\medskip
\noindent{\bf Case 2: $\partial_x u(t_0,x_0)=0$.}

When $\partial_x u(t_0,x_0)=0$, we cannot obtain formula \eqref{xidotintro}. However the ODE system \eqref{DDD} and the approximation formula \eqref{heuristicI1xi0} suggests that, at least locally, 
\beqs u^\ep(t,x)\sim \sum_{i=1}^{N_\ep}\ep\phi\left(\frac{x-x_i(t)}{\ep\delta},b_i\right),\eeqs
with $x_i(t)$ solution of 
$$\dot{x}_{i}(t)\simeq-c_0b_{i_0}\I[u(t_0,\cdot)](x_i(t)),$$ and $x_i(t_0)=x_i^0$, with $x_i^0$ the level set points of the function $u(t_0,\cdot)$.  Therefore, we proceed as follows.
Assume that in a box $Q_\rho$ around $(t_0,x_0)$ $u$ has the form 
\beq\label{uparaboladeintro}u(t,x)=a(x-x_0)^2+g(t)\eeq for some $a>0$ and $g$ smooth. 
Notice that level set points of $u$ in $Q_\rho$ which are  smaller than $x_0$ are associated to $b_i=-1$, and those  bigger than $x_0$ are associated to $b_i=1$.
Fix any $0<\sigma<<\rho$ independent of $\ep$. We construct a smooth approximation, $u^\sigma$,  of $u$ which is constant in $x$ for $|x-x_0|\leq \sigma$. 
  Precisely,  $u^\sigma$  is such that 
\beq\label{parabolabisintro}
\begin{cases}
u\le u^\sigma \\
u^\sigma \le u+C\sigma^2\text{ if }|x-x_0|\le \sigma\\
u^\sigma \text{ is constant in }x \text{ if }|x-x_0|\le \sigma\\
u^\sigma \text{ is non-increasing  in }x \text{ if }x\in (-\infty,x_0)\\
u^\sigma \text{ is non-decreasing  in }x \text{ if } x\in (x_0,+\infty).\\
\end{cases}\eeq
Next, we set  
 \beqs\label{therightcintro}c:= \frac{ 1}{4c_0L},\eeqs
with $L>0$ to be determined.
We then define $x_i^0$ and $b_i$, $i=1,\ldots, N_\ep$,  as in \eqref{xi} and  \eqref{bidef} for the function $u^\sigma(t_0-c\sigma,\cdot)$.
By \eqref{vappintr}, for all $x\in\R$, 
\beq\label{initialconscase2intro}u^\sigma(t_0-c\sigma,x)=  \sum_{i = 1}^{N_\ep}\ep \phi\left (\dfrac{x-x_i^0}{\ep \delta}, b_i\right ) +\ep M_\ep+o_\ep(1),\eeq
where the constant $M_\ep:=\lceil u^\sigma(t_0-c\sigma,-\infty)/\ep\rceil$ is a normalization so that $u^\sigma(t_0-c\sigma,-\infty)-\ep M_\ep=o_\ep(1)$. 
Define the function
\beq\label{phiintro} H^\ep(t,x):= \sum_{i=1}^{N_\ep}\ep\left(\phi\left(\frac{x-x_i(t)}{\ep\delta},b_i\right)+\delta\psi\left(\frac{x-x_i(t)}{\ep\delta},b_i\right)\right)+\ep M_\ep+\ep\left\lceil\frac{o_\ep(1)}{\ep}\right\rceil,\eeq
with $x_i(t)$ the solution of the ODE system  \eqref{levelsetode} with initial condition  $x_i(t_0-c\sigma)=x_i^0$,
that is 
$$x_i(t)=x_i^0-b_ic_0L[t - (t_0 - c\sigma)].$$
Since $ \sum_{i=1}^{N_\ep}\ep\delta\psi\left(\frac{x-x_i(t)}{\ep\delta},b_i\right)=o_\ep(1)$, from \eqref{initialconscase2intro} we can 
 choose $o_\ep(1)$ in \eqref{phiintro} in such a way 
$$u(t_0-c\sigma,x)\le u^\sigma(t_0-c\sigma,x)\leq H^\ep (t_0-c\sigma,x).$$
Notice that particles  $x_i(t)$ and $x_{i+1}(t)$ with the   same orientation ($b_i b_{i+1}=1$) move in parallel, while opposite oriented particles,  ($b_i b_{i+1}=-1$) move each toward the other. However, since $u^\sigma$ is constant in $x\in [x_0-\sigma,x_0+\sigma]$ and monotonic in $(-\infty,x_0)$ and in $(x_0,+\infty)$, 
 particles with opposite orientation are at distance larger than $2\sigma$ at time $t_0-c\sigma$. This guarantees that no collision occurs in the interval 
 $[t_0-c\sigma, t_0+c\sigma]$.  Then, we are able to show that setting 
 $$L:=\frac{C_0}{\sigma^\frac12}$$ for some  $C_0>0$ large enough but independent of $\ep$ and $\sigma$, $H^\ep$ is supersolution of \eqref{uepeq} in $[t_0-c\sigma, t_0+c\sigma]\times \R$,
and by the comparison principle,
\beqs\label{comparsionproofconograd}H^\ep(t,x) \geq u^\ep(t,x)\quad\text{for any }(t,x) \in [t_0-c\sigma, t_0+c\sigma] \times \R.\eeqs
This yields,  
 \beqs
\begin{split}
    u^\ep(t_0,x_0) 
    &\leq H^\ep(t_0,x_0)\\
      &= \sum_{i=1}^{N_\ep} \ep \phi \left ( \dfrac{x_0-x_i(t_0)}{\ep \delta}, b_i \right ) +\ep M_\ep+ o_\ep(1)\\
    &= \sum_{i=1}^{N_\ep} \ep \phi \left ( \dfrac{(x_0 +b_ic_0Lc\sigma)- x_i^0)}{\ep \delta}, b_i \right ) +\ep M_\ep+ o_\ep(1)\\
    &=  \sum_{i=1}^{N_\ep} \ep \phi \left ( \dfrac{(x_0 +c_0Lc\sigma)- x_i^0)}{\ep \delta}, b_i \right ) +\ep M_\ep+ o_\ep(1),
    \end{split}
\eeqs
    where the last equality needs to be justified (see  Lemma \ref{lastlem}).  Then, by \eqref{initialconscase2intro} and the second inequality in \eqref{parabolabisintro}, we obtain
     \beqs
\begin{split}
 u^\ep(t_0,x_0)&\leq   \sum_{i=1}^{N_\ep} \ep \phi \left ( \dfrac{(x_0 +c_0Lc\sigma)- x_i^0)}{\ep \delta}, b_i \right ) +\ep M_\ep+ o_\ep(1)\\
    &=u^\sigma(t_0-c\sigma, x_0 +c_0Lc\sigma) + o_\ep(1)\\
    &\leq u(t_0-c\sigma, x_0 +c_0Lc\sigma) + o_\ep(1)  + C\sigma^2.
\end{split}
\eeqs
 Passing to the limit as $\ep\to0$ and recalling  the definitions of $c$ and $L$ we get 
$$u(t_0,x_0)- u\left(t_0-k_0\sigma^\frac32,x_0+k_1\sigma\right)\leq C\sigma^2,$$
for some $k_0,\,k_1$ independent of $\sigma$. 
Dividing both sides by $k_0\sigma^\frac32$ and letting $\sigma\to0$, we finally obtain (recall  \eqref{uparaboladeintro})
$$\partial_t u(t_0,x_0)\leq0.$$ Similarly, one can prove that 
$\partial  u_t(t_0,x_0)\geq0.$

\subsection{Viscosity sub and supersolutions}
To formally prove the convergence result we show the functions
$u^+:={\limsup_{\ep\rightarrow0}}^*u^\epsilon$ and 
$u^-:={\liminf_{\ep\rightarrow0}}_*u^\epsilon$, which  are
 everywhere finite, are respectively sub and supersolution of 
\eqref{ubareq}. Moreover, $u^+(0,x)\leq u_0(x)\leq u^-(0,x)$. The comparison principle then implies that $u^+\leq \us\le u^-.$ Since the reverse 
 inequality $u^-\leq u^+$ always holds true, we
conclude that the two functions coincide with the continuous  viscosity  solution of \eqref{ubareq}. 

\section{Approximation Results}

In this section, we present several approximation results, which are similar to those in \cite{patsan}. In this paper, however, we consider a function $v$ satisfying \eqref{vass}, which is not necessarily monotonic. Since the proofs of some results are similar, they will be omitted. Readers may consult \cite{patsan} if necessary. 
The following lemma is proven in \cite{patsan}, see Lemma 4.1.
\begin{lem}\label{lemdistxi} Assume that $v$ satisfies \eqref{vass}. Let  $\|v_x\|_\infty\leq L$, and 
let $x_i$ be defined as in \eqref{xi}.
Then, 
\beq\label{proplippart1}x_{i+1}-x_i\geq \ep L^{-1}\quad\text{ for all }i=1,\ldots, N_\ep-1.\eeq
Moreover,  there exists $c>0$ independent of $v$ such that  for any $\xs\in \R$
\beq\label{i/k^2sum}\sum_{\stackrel{i=1}{i\neq i_0}}^{N_\ep}\frac{\ep^2}{(x_i-\xs)^2}\leq cL^2.\eeq
In addition, if $|v_x|\ge  a>0$ on an interval $I$, then for all $x_{i+1},\,x_i\in I$, we have
\beq\label{proplippart2}x_{i+1}-x_i\leq \ep a^{-1}.\eeq
\end{lem}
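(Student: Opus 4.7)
The plan is to dispatch the three assertions separately, reducing each to one elementary observation. First I would note that by the construction of the relabeled sequence $\{x_i\}$ in \eqref{xi}--\eqref{bidef}, consecutive points satisfy the exact identity $v(x_i) - v(x_{i-1}) = b_i\epsilon$ with $b_i \in \{\pm 1\}$, so $|v(x_i) - v(x_{i-1})| = \epsilon$. This identity survives the removal of the zero-$b_i$ level-set points (those at which $v$ returns to its previous value) because their deletion telescopes any run of zeros into a net jump of $\pm\epsilon$.

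Given this identity, \eqref{proplippart1} follows in one line: Lipschitz continuity gives $\epsilon = |v(x_{i+1}) - v(x_i)| \leq L|x_{i+1} - x_i|$. Likewise for \eqref{proplippart2}: the hypothesis $|v_x| \geq a > 0$ on $I$, together with the continuity of $v_x$ (since $v \in C^{1,1}$), forces $v_x$ to have a constant sign on $I$; the fundamental theorem then yields $\epsilon = \bigl|\int_{x_i}^{x_{i+1}} v_x\,dx\bigr| \geq a(x_{i+1} - x_i)$, so $x_{i+1} - x_i \leq \epsilon/a$.

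For \eqref{i/k^2sum}, I would let $i_0$ be the index of the closest particle to $\bar{x}$ (as defined in the notation section). Combining the defining inequality $|\bar{x} - x_{i_0}| \leq |\bar{x} - x_i|$ with the triangle inequality gives $|x_i - x_{i_0}| \leq 2|x_i - \bar{x}|$ for all $i \neq i_0$. Inserting the separation bound $|x_i - x_{i_0}| \geq |i - i_0|\epsilon/L$ from \eqref{proplippart1} then yields
$$\sum_{i \neq i_0} \frac{\epsilon^2}{(x_i - \bar{x})^2} \leq 4\sum_{i \neq i_0} \frac{\epsilon^2}{(x_i - x_{i_0})^2} \leq 4L^2 \sum_{k \geq 1} \frac{2}{k^2} = \frac{4\pi^2}{3}\,L^2,$$
so any $c \geq 4\pi^2/3$ works, and is indeed independent of $v$.

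The only step I anticipate needing real care is the verification that the relabeling procedure genuinely delivers the jump identity $|v(x_i) - v(x_{i-1})| = \epsilon$ uniformly; this is asserted in \eqref{bidef}, but tracing through the collapse of runs of equal-valued level-set points requires being explicit. Beyond that bookkeeping, the remaining steps are immediate consequences of Lipschitz continuity and a rescaled $\sum 1/k^2$ tail estimate.
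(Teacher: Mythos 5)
Your proof is correct. The paper itself gives no argument for this lemma (it simply cites Lemma~4.1 of \cite{patsan}), so there is no in-text proof to compare against, but the three steps you give are exactly the standard ones: Lipschitz continuity converts the exact jump $|v(x_{i+1})-v(x_i)|=\ep$ guaranteed by \eqref{bidef} into the lower spacing bound \eqref{proplippart1}; the triangle inequality together with the defining property of the closest particle $x_{i_0}$ gives $|x_i-x_{i_0}|\le 2|x_i-\xs|$, after which \eqref{proplippart1} reduces \eqref{i/k^2sum} to a rescaled $\sum k^{-2}$ tail with the explicit constant $c=4\pi^2/3$; and the non-vanishing of $v_x$ on $I$ (constant sign, or more simply the mean value theorem: $\ep=|v_x(\xi)|\,|x_{i+1}-x_i|\ge a|x_{i+1}-x_i|$) yields \eqref{proplippart2}. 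The one point you flag as needing care — that $|v(x_i)-v(x_{i-1})|=\ep$ survives the deletion of zero-$b_i$ points — is not part of the lemma's burden: \eqref{bidef} asserts it as part of the construction, and your telescoping remark is in fact the complete justification (the $b$-value between two surviving consecutive points is the sum of the intermediate original $b$'s, all of which vanish except the last), so no gap remains.
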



\begin{lem}[Short range interaction]\label{approxIshortlem}
Assume that $v$ satisfies \eqref{vass} and 
let $x_i$ and $b_i$ be defined as in \eqref{xi} and  \eqref{bidef}.
Let  $r=r_\ep=o_\ep(1)$ and $\ep/r=o_\ep(1)$. For any $\rho\ge r$ and $\xs\in (x_1+\rho,x_{N_\ep}-\rho)$, then
\beq\label{I1rhoapplemeq} \frac{1}{\pi}\sum_{\stackrel{i\neq i_0}{ r\le |x_i-\xs|\leq\rho}}\frac{b_i\ep}{x_i-\xs}=\I^{1,\rho}[v](\xs)+\frac{1}{\pi}\frac{v(\xs+\rho)+v(\xs-\rho)-2v(\xs)}{\rho}+
o_\ep(1).
\eeq
\end{lem}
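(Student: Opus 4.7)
The plan is to reduce both sides of \eqref{I1rhoapplemeq} to the common quantity $\frac{1}{\pi}\int_{r\le|y-\xs|\le\rho}v'(y)/(y-\xs)\,dy$ and then show the two differences are $o_\ep(1)$. On the integral side this will be done by a direct integration by parts of the non-singular tail of $\I^{1,\rho}[v](\xs)$. On the discrete side the comparison is carried out by introducing the step function $w(y):=\sum_{i=1}^{N_\ep}b_i\ep H(y-x_i)$, whose distributional derivative has atoms of mass $b_i\ep$ precisely at the points $x_i$, and comparing it with $v$ through the bounded function $V:=w-v$.

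First I would write $\I^{1,\rho}[v](\xs)=\I^{1,r}[v](\xs)+\frac{1}{\pi}\int_{r\le|y-\xs|\le\rho}(v(y)-v(\xs))/(y-\xs)^2\,dy$ (no principal value is needed in the second integral since $r>0$) and integrate this second integral by parts on each of $[\xs+r,\xs+\rho]$ and $[\xs-\rho,\xs-r]$. Adding the two contributions yields
\begin{equation*}
\I^{1,\rho}[v](\xs)=\I^{1,r}[v](\xs)-\frac{v(\xs+\rho)+v(\xs-\rho)-2v(\xs)}{\pi\rho}+\frac{v(\xs+r)+v(\xs-r)-2v(\xs)}{\pi r}+\frac{1}{\pi}\int_{r\le|y-\xs|\le\rho}\frac{v'(y)}{y-\xs}\,dy.
\end{equation*}
Since $v\in C^{1,1}$, estimate \eqref{I11boundprelim} gives $|\I^{1,r}[v](\xs)|=O(r)$ and Taylor's formula gives $|v(\xs+r)+v(\xs-r)-2v(\xs)|=O(r^2)$, so both correction terms at scale $r$ are $O(r)=o_\ep(1)$. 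The lemma therefore reduces to proving
\begin{equation*}
\sum_{\substack{i\ne i_0\\ r\le|x_i-\xs|\le\rho}}\frac{b_i\ep}{x_i-\xs}=\int_{r\le|y-\xs|\le\rho}\frac{v'(y)}{y-\xs}\,dy+o_\ep(1).
\end{equation*}

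Now set $V:=w-v$. By \eqref{vxidef} one has $w=v(x_j)$ on $[x_j,x_{j+1})$, and combined with \eqref{oscvxixi+1} this gives $|V(y)|\le 2\ep$ for every $y\in(x_1,x_{N_\ep})$; since $\xs\in(x_1+\rho,x_{N_\ep}-\rho)$, the integration interval $[\xs-\rho,\xs+\rho]$ lies inside $(x_1,x_{N_\ep})$, so this bound applies on the whole region of interest. The function $V$ is of bounded variation with distributional derivative $dV=\sum_i b_i\ep\,\delta_{x_i}-v'(y)\,dy$, and Stieltjes integration by parts against $\phi(y):=1/(y-\xs)$ on each of $[\xs+r,\xs+\rho]$ and $[\xs-\rho,\xs-r]$ yields
\begin{equation*}
\sum_{r\le|x_i-\xs|\le\rho}\frac{b_i\ep}{x_i-\xs}-\int_{r\le|y-\xs|\le\rho}\frac{v'(y)}{y-\xs}\,dy=\bigg[\frac{V(y)}{y-\xs}\bigg]_{\text{bdry}}+\int_{r\le|y-\xs|\le\rho}\frac{V(y)}{(y-\xs)^2}\,dy.
\end{equation*}
Each of the four boundary contributions is bounded by $2\ep/r$, and the remaining integral by $2\ep\cdot 2(r^{-1}-\rho^{-1})\le 4\ep/r$, so the right-hand side is $O(\ep/r)=o_\ep(1)$ by the hypothesis $\ep/r=o_\ep(1)$. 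Finally, if $x_{i_0}$ happens to belong to the summation range, its contribution $|b_{i_0}\ep/(x_{i_0}-\xs)|\le\ep/r$ is itself $o_\ep(1)$, so removing the index $i_0$ is harmless. Combining the two steps then yields \eqref{I1rhoapplemeq}.

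The main obstacle I expect is the bookkeeping in the Stieltjes integration by parts when some $x_i$ coincides with a boundary point $\xs\pm r$ or $\xs\pm\rho$, which makes $V$ discontinuous there; this is handled by taking the appropriate one-sided limit of $V$ (still bounded by $2\ep$) and by checking that the atomic part of $dV$ that falls inside the integration region contributes exactly the displayed discrete sum on the left-hand side.
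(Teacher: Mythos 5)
Your proof is correct, but it follows a genuinely different route from the paper. The paper's proof is purely elementary: it partitions each of $[\xs-\rho,\xs-r]$ and $[\xs+r,\xs+\rho]$ into the subintervals $[x_i,x_{i+1}]$ determined by the particle points, bounds $v$ on each piece by $v(x_i)\pm 2\ep$, integrates $(x-\xs)^{-2}$ explicitly, and telescopes the resulting sum using $v(x_i)-v(x_{i-1})=b_i\ep$; the error terms $O(\ep/r)$ appear as boundary contributions at the points $\xs\pm r$, $\xs\pm\rho$ and at the two extreme particles $x_{M_\rho}$, $x_{M_r}$. You instead introduce the BV step function $w=\sum_i b_i\ep H(\cdot-x_i)$, observe that $V=w-v$ is uniformly $O(\ep)$ on $(x_1,x_{N_\ep})$ by \eqref{vxidef} and \eqref{oscvxixi+1}, and perform a single Stieltjes integration by parts against $1/(y-\xs)$ to compare the discrete sum directly with $\int v'/(y-\xs)$; a classical integration by parts on the other side then produces the $\rho$-boundary term and the $o_\ep(1)$ corrections at scale $r$. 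Interestingly, the paper explicitly records the $\sum b_i\ep/(x_i-\xs)\simeq\int v_x/(x-\xs)$ heuristic in the remark following Proposition \ref{apprIcor} but does not use it in the proof; your argument makes that heuristic rigorous and in doing so replaces the interval-by-interval bookkeeping with a single application of the bounded-variation calculus, at the mild cost of invoking distributional derivatives and attending to the one-sided limits of $V$ at the four endpoints and at $x_{i_0}$, all of which you correctly flag and all of which are absorbed into the $O(\ep/r)$ error.
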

\begin{proof}
Since $v\in C^{1,1}(\R)$ and $r=o_\ep(1)$, by \eqref{I11boundprelim} there exists $C>0$ such that 
$$|\I^{1,r}[v](\xs)|\leq Cr=o_\ep(1).$$
Therefore, we have 
\beq\label{approxshortlemprima}\I^{1,\rho}[v](\xs)=\frac{1}{\pi}\int_{\xs-\rho}^{\xs-r} \dfrac{v(x) - v(\xs)}{(x-\xs)^2} dx+
\frac{1}{\pi}\int_{\xs+r}^{\xs+\rho} \dfrac{v(x) - v(\xs)}{(x-\xs)^2} dx+o_\ep(1).
\eeq
We write the first term in \eqref{approxshortlemprima} as
\beqs \int_{\xs-\rho}^{\xs-r} \dfrac{v(x) - v(\xs)}{(x-\xs)^2} dx 
=\int_{\xs-\rho}^{\xs-r} \dfrac{v(x)}{(x-\xs)^2} dx - \int_{\xs-\rho}^{\xs-r} \dfrac{v(\xs)}{(x-\xs)^2} dx,
\eeqs
and notice that we can integrate the second term in \eqref{approxshortlemprima} as follows
\beq \label{ndeq}
\int_{\xs-\rho}^{\xs-r} \dfrac{v(\xs)}{(x-\xs)^2} dx = v(\xs) \int_{\xs-\rho}^{\xs-r} \dfrac{1}{(x-\xs)^2} dx =  \dfrac{v(\xs) }{r} - \dfrac{v(\xs) }{\rho}.
\eeq
Let us first assume that there are particles $x_i$ in the interval $[\xs-\rho,\xs-r]$. 
Let us denote by $M_\rho$ and $M_r$ respectively the smallest and the largest integer $i$ such that $x_i\in[\xs-\rho,\xs-r]$, that is
\beq \label{xmrho} x_{M_\rho-1}<\xs-\rho\leq x_{M_\rho}\leq  x_{M_r}\leq\xs-r<x_{M_r+1}.\eeq
Then, we have that
\beq \label{intsplit}
\int_{\xs-\rho}^{\xs-r} \dfrac{v(x)}{(x-\xs)^2} dx = \int_{\xs-\rho}^{x_{M_\rho}}\dfrac{v(x)}{(x-\xs)^2} dx + \sum_{i=M_\rho}^{M_r-1} \int_{x_i}^{x_{i+1}} \dfrac{v(x)}{(x-\xs)^2} dx + \int_{x_{M_r}}^{\xs-r} \dfrac{v(x) }{(x-\xs)^2} dx. 
\eeq
By \eqref{oscvxixi+1}, $v(x)\leq v(\xs-\rho) +2 \ep$ for $x\in [\xs-\rho, x_{M_\rho}]$. Hence, we obtain 
\beq \label{t1}
\int_{\xs-\rho}^{x_{M_\rho}}\dfrac{v(x)}{(x-\xs)^2} dx \leq \int_{\xs-\rho}^{x_{M_\rho}}\dfrac{v(\xs-\rho) + 2\ep}{(x-\xs)^2} dx = \dfrac{v(\xs-\rho) + 2\ep}{-\rho} - \dfrac{v(\xs-\rho) + 2\ep}{x_{M_\rho}-\xs}.
\eeq
Similarly, $v(x)\leq v(\xs-r) +2 \ep$ for $x\in [x_{M_r},\xs-r]$, which gives us 
\beq \label{t3}
\int_{x_{M_r}}^{\xs-r} \dfrac{v(x) }{(x-\xs)^2} dx \leq \int_{x_{M_r}}^{\xs-r} \dfrac{v(\xs-r) +2 \ep}{(x-\xs)^2} dx = \dfrac{v(\xs-r) +2 \ep}{x_{M_r}-\xs} - \dfrac{v(\xs-r) +2 \ep}{-r}.
\eeq
Also, for $x\in [x_i,x_{i+1}]$, we have $v(x)\leq v(x_i) + 2\ep$. Thus, we obtain 
\beq \label{t2} \begin{split}
   \sum_{i=M_\rho}^{M_r-1} \int_{x_i}^{x_{i+1}} \dfrac{v(x)}{(x-\xs)^2} dx &\leq \sum_{i=M_\rho}^{M_r-1} \int_{x_i}^{x_{i+1}} \dfrac{v(x_i) + 2\ep}{(x-\xs)^2} dx\\
   &= \sum_{i=M_\rho}^{M_r-1} \left [ \dfrac{v(x_i) + 2\ep}{x_i-\xs} -\dfrac{v(x_i) + 2\ep}{x_{i+1}-\xs} \right ]\\
   &= \sum_{i=M_\rho}^{M_r-1} \dfrac{v(x_i) + 2\ep}{x_i-\xs} - \sum_{i=M_\rho+1}^{M_r} \dfrac{v(x_{i-1}) +2 \ep}{x_i-\xs}\\
   &=-\dfrac{v(x_{M_r}) + 2\ep}{x_{M_r} - \xs} + \sum_{i=M_\rho}^{M_r} \dfrac{v(x_i) - v(x_{i-1})}{x_i-\xs} + \dfrac{v(x_{M_\rho-1}) + 2\ep}{x_{M_\rho}-\xs}\\
   &= \dfrac{v(x_{M_\rho-1}) +2 \ep}{x_{M_\rho}-\xs} -\dfrac{v(x_{M_r}) + 2\ep}{x_{M_r} - \xs} + \sum_{i=M_\rho}^{M_r} \dfrac{b_i\ep}{x_i-\xs},
\end{split} \eeq
using $v(x_i) = v(x_{i-1}) + b_i\ep$ in the last equality. Finally, we combine \eqref{ndeq}, \eqref{t1}, \eqref{t3} and \eqref{t2} to get
\beqs \begin{split}
\int_{\xs-\rho}^{\xs-r} \dfrac{v(x) - v(\xs)}{(x-\xs)^2} dx  &\leq \dfrac{v(\xs-\rho) + 2\ep}{-\rho} - \dfrac{v(\xs-\rho) + 2\ep}{x_{M_\rho}-\xs}\\
&+ \dfrac{v(\xs-r) + 2\ep}{x_{M_r}-\xs} - \dfrac{v(\xs-r) + 2\ep}{-r}\\
&+ \dfrac{v(x_{M_\rho-1}) + 2\ep}{x_{M_\rho}-\xs} -\dfrac{v(x_{M_r}) + 2\ep}{x_{M_r} - \xs} + \sum_{i=M_\rho}^{M_r} \dfrac{b_i\ep}{x_i-\xs} - \dfrac{v(\xs)}{r} + \frac{v(\xs)}{\rho},
\end{split}
\eeqs
which simplifies to 
\beqs \begin{split}
\int_{\xs-\rho}^{\xs-r} \dfrac{v(x) - v(\xs)}{(x-\xs)^2} dx  &\leq \sum_{i=M_\rho}^{M_r} \dfrac{b_i\ep}{x_i-\xs} + \dfrac{v(\xs)-v(\xs-\rho)}{\rho} - \dfrac{2\ep}{\rho} + \dfrac{v(\xs-r)-v(\xs)}{r} + \dfrac{2\ep}{r}\\
&+ \dfrac{v(x_{M_\rho-1})-v(\xs-\rho)}{x_{M_\rho}-\xs} + \dfrac{v(\xs-r) - v(x_{M_r})}{x_{M_r} - \xs}.
\end{split}
\eeqs
Note that, by \eqref{oscvxixi+1} and \eqref{xmrho}, $|v(x_{M_\rho-1})-v(\xs-\rho)|\leq 2\ep$, $|v(\xs-r) - v(x_{M_r})|\leq 2\ep$, $|x_{M_\rho}-\xs|\geq r$ and $|x_{M_r} - \xs| \geq r$. Hence, the following holds: 
\beq \label{est} 
\dfrac{v(x_{M_\rho-1})-v(\xs-\rho)}{x_{M_\rho}-\xs} \leq \dfrac{2\ep}{r} \quad \text{and} \quad
\dfrac{v(\xs-r) - v(x_{M_r})}{x_{M_r} - \xs} \leq \dfrac{2\ep}{r}. \eeq
Using \eqref{est}, we therefore obtain 
\beq \label{leftleqbound1}
\int_{\xs-\rho}^{\xs-r} \dfrac{v(x) - v(\xs)}{(x-\xs)^2} dx  \leq \sum_{i=M_\rho}^{M_r} \dfrac{b_i\ep}{x_i-\xs} + \dfrac{v(\xs)-v(\xs-\rho)}{\rho} + \dfrac{v(\xs-r)-v(\xs)}{r} + \dfrac{6\ep}{r}.
\eeq
If there are no particles $x_i$ in the interval $[\xs-\rho,\xs-r]$, then by \eqref{oscvxixi+1} $|v(x)-v(\xs-\rho)|\le2\ep$ and  $|v(x)-v(\xs-r)|\le2\ep$ for any $x\in[\xs-\rho,\xs-r]$. 
Therefore, we can simply estimate
\begin{equation*}\begin{split} \int_{\xs-\rho}^{\xs-r} \dfrac{v(x) }{(x-\xs)^2} dx&\leq (v(\xs-\rho)+2\ep)\int_{\xs-\rho}^{\xs-r} \dfrac{1}{(x-\xs)^2} dx=(v(\xs-\rho)+2\ep)\left(\frac1r-\frac1\rho\right)\\&
\leq\frac{v(\xs)}{r}-\frac{v(\xs-\rho)}{\rho}+\frac{4\ep}{r}, 
\end{split}\end{equation*}
which combined with \eqref{ndeq} gives again \eqref{leftleqbound1} with the summation on the right-hand side equal to 0. 

Similarly, for the second term in \eqref{approxshortlemprima}, one can show that 
\beq \label{rightleqbound2}
\int_{\xs+r}^{\xs+\rho} \dfrac{v(x) - v(\xs)}{(x-\xs)^2} dx  \leq \sum_{i=N_r}^{M_\rho} \dfrac{b_i\ep}{x_i-\xs} + \dfrac{v(\xs)-v(\xs+\rho)}{\rho} + \dfrac{v(\xs+r)-v(\xs)}{r} + \dfrac{6\ep}{r}.
\eeq
By \eqref{approxshortlemprima}, \eqref{leftleqbound1} and \eqref{rightleqbound2}, we conclude that 
\beq \label{315} \begin{split}
   \I^{1,\rho}[v](\xs) &\leq \frac{1}{\pi}\sum_{\stackrel{i\neq i_0}{ r\le |x_i-\xs|\leq\rho}}\frac{b_i\ep}{x_i-\xs}+ \dfrac{12\ep}{r} + o_\ep(1)\\
   &+ \dfrac{1}{\pi} \dfrac{v(\xs+r)+ v(\xs-r)-2v(\xs) }{r} - \dfrac{1}{\pi} \dfrac{v(\xs+\rho)  + v(\xs-\rho)-2v(\xs)}{\rho}. 
\end{split}
\eeq
Since $v\in C^{1,1}(\R)$, there exists a constant $C>0$ such that 
\beq \label{c11} \left|\dfrac{v(\xs+r) +v(\xs-r)- 2v(\xs)}{r}\right|\leq Cr=o_\ep(1).\eeq 
Therefore, using $\ep/r = o_\ep(1)$, we finally obtain
\beq \label{leqbound}
\I^{1,\rho}[v](\xs) \leq \frac{1}{\pi}\sum_{\stackrel{i\neq i_0}{ r\le |x_i-\xs|\leq\rho}}\frac{b_i\ep}{x_i-\xs} - \dfrac{1}{\pi} \dfrac{v(\xs+\rho) + v(\xs-\rho)-2v(\xs) }{\rho}  + o_\ep(1).
\eeq
For the lower bound, we apply the following inequalities to \eqref{intsplit}.
\beqs \begin{split}
    v(x) &\geq v(x_i) -2\ep \quad \quad  \text{for} \quad x\in [x_i,x_{i+1}], i = M_\rho,...,M_r-1.\\
    v(x) &\geq v(\xs-\rho) - 2\ep \quad \text{for} \quad x\in[\xs-\rho, x_{M\rho}]\\
    v(x) &\geq v(\xs-r) - 2\ep \quad \text{for} \quad x\in[x_{M_r}, \xs-r].
\end{split} \eeqs
Then, we follow the same steps as above to eventually obtain 
\beq \label{leftgeqbound1}
\int_{\xs-\rho}^{\xs-r} \dfrac{v(x) - v(\xs)}{(x-\xs)^2} dx  \geq \sum_{i=M_\rho}^{M_r} \dfrac{b_i\ep}{x_i-\xs} + \dfrac{v(\xs)-v(\xs-\rho)}{\rho} + \dfrac{v(\xs-r)-v(\xs)}{r} - \dfrac{6\ep}{r}.
\eeq
Similarly, one can show that 
\beq \label{rightgeqbound2}
\int_{\xs+r}^{\xs+\rho} \dfrac{v(x) - v(\xs)}{(x-\xs)^2} dx  \geq \sum_{i=N_r}^{M_\rho} \dfrac{b_i\ep}{x_i-\xs} + \dfrac{v(\xs)-v(\xs+\rho)}{\rho} + \dfrac{v(\xs+r)-v(\xs)}{r} - \dfrac{6\ep}{r}.
\eeq
By combining \eqref{leftgeqbound1} and \eqref{rightgeqbound2}, and using \eqref{c11}, we obtain 
\beq \label{geqbound}
\I^{1,\rho}[v](\xs) \geq \frac{1}{\pi}\sum_{\stackrel{i\neq i_0}{ r\le |x_i-\xs|\leq\rho}}\frac{b_i\ep}{x_i-\xs} - \dfrac{1}{\pi} \dfrac{v(\xs+\rho) + v(\xs-\rho)-2v(\xs) }{\rho}  + o_\ep(1).
\eeq
 By \eqref{leqbound} and \eqref{geqbound}, we have proven \eqref{I1rhoapplemeq}.
\end{proof}


\begin{lem}[Long range interaction]\label{approxIlonglem}
Under the assumptions of Lemma \ref{approxIshortlem} and for $r$ as in the lemma, 
 for any $\rho\ge r$ and $\xs\in(x_1+\rho,x_{N_\ep}-\rho)$,
\beq\label{approxIlonglemeq} \frac{1}{\pi}\sum_{|x_i-\xs|>\rho}\frac{b_i\ep}{x_i-\xs}=\I^{2,\rho}[v](\xs)
-\frac{1}{\pi}\frac{v(\xs+\rho)+v(\xs-\rho)-2v(\xs)}{\rho}+ o_\ep(1).
\eeq
\end{lem}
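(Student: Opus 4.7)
My approach is to mirror the proof of Lemma \ref{approxIshortlem}, but applied to the two half-line integrals that make up $\I^{2,\rho}[v](\xs)$. Write
\[
\I^{2,\rho}[v](\xs)=\frac{1}{\pi}\int_{-\infty}^{\xs-\rho}\frac{v(y)-v(\xs)}{(y-\xs)^2}dy+\frac{1}{\pi}\int_{\xs+\rho}^{+\infty}\frac{v(y)-v(\xs)}{(y-\xs)^2}dy.
\]
I will treat the right integral in detail; the left one is analogous. Let $M$ be the smallest index with $x_M\ge\xs+\rho$, so $x_{M-1}<\xs+\rho\le x_M\le\dots\le x_{N_\ep}$. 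Split
\[
\int_{\xs+\rho}^{+\infty}\frac{v(y)}{(y-\xs)^2}dy=\int_{\xs+\rho}^{x_M}+\sum_{i=M}^{N_\ep-1}\int_{x_i}^{x_{i+1}}+\int_{x_{N_\ep}}^{+\infty},
\]
and simultaneously compute $\int_{\xs+\rho}^{+\infty}\frac{v(\xs)}{(y-\xs)^2}dy=v(\xs)/\rho$, which will combine to produce the $-v(\xs+\rho)/\rho$ contribution in the final identity.

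On each interval $[x_i,x_{i+1}]$ I replace $v(y)$ by $v(x_i)$ with the $O(\ep)$ error controlled by \eqref{oscvxixi+1}. After integration these errors telescope to a term of size $O(\ep)\bigl(\tfrac{1}{x_M-\xs}-\tfrac{1}{x_{N_\ep}-\xs}\bigr)=O(\ep/\rho)=o_\ep(1)$, using $\rho\ge r$ and $\ep/r=o_\ep(1)$. The main sum telescopes via $v(x_i)-v(x_{i-1})=b_i\ep$ (see \eqref{vxidef}) to
\[
\frac{v(x_M)}{x_M-\xs}-\frac{v(x_{N_\ep-1})}{x_{N_\ep}-\xs}+\sum_{i=M+1}^{N_\ep-1}\frac{b_i\ep}{x_i-\xs}.
\]
For the boundary piece $\int_{\xs+\rho}^{x_M}$ I again use $|v(y)-v(\xs+\rho)|\le 2\ep$ (no particles strictly inside this interval) to get $v(\xs+\rho)/\rho - v(\xs+\rho)/(x_M-\xs)+O(\ep/\rho)$. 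Combining with the telescope yields $\frac{v(x_M)-v(\xs+\rho)}{x_M-\xs}$, which I split as $\frac{v(x_M)-v(x_{M-1})}{x_M-\xs}+\frac{v(x_{M-1})-v(\xs+\rho)}{x_M-\xs}=\frac{b_M\ep}{x_M-\xs}+O(\ep/\rho)$, thereby absorbing the index $M$ into the sum.

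The delicate step, and the main obstacle I anticipate, is the tail integral $\int_{x_{N_\ep}}^{+\infty}\frac{v(y)}{(y-\xs)^2}dy$. Since no level set boundary of $\tilde\Lambda_i$ lies beyond $x_{N_\ep}$, the oscillation of $v$ on $(x_{N_\ep},+\infty)$ is at most $2\ep$, and the limit $v(+\infty)=l$ combined with the lattice structure gives $|l-v(x_{N_\ep})|\le\ep$. Writing the tail as $\int_{x_{N_\ep}}^{+\infty}\frac{l}{(y-\xs)^2}dy$ plus an $O(\ep/\rho)$ error, I obtain $l/(x_{N_\ep}-\xs)+O(\ep/\rho)$. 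Coupling this with the boundary term $-v(x_{N_\ep-1})/(x_{N_\ep}-\xs)$ from the telescope gives $\frac{l-v(x_{N_\ep-1})}{x_{N_\ep}-\xs}=\frac{b_{N_\ep}\ep}{x_{N_\ep}-\xs}+\frac{l-v(x_{N_\ep})}{x_{N_\ep}-\xs}=\frac{b_{N_\ep}\ep}{x_{N_\ep}-\xs}+O(\ep/\rho)$, which brings $N_\ep$ into the particle sum as well.

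Putting the pieces together yields
\[
\frac{1}{\pi}\int_{\xs+\rho}^{+\infty}\frac{v(y)-v(\xs)}{(y-\xs)^2}dy=\frac{1}{\pi}\sum_{x_i>\xs+\rho}\frac{b_i\ep}{x_i-\xs}-\frac{1}{\pi}\frac{v(\xs+\rho)-v(\xs)}{\rho}+o_\ep(1),
\]
and repeating the argument for the integral on $(-\infty,\xs-\rho)$ (using $v(-\infty)=0$ and $|v(x_1)|\le\ep$) produces the symmetric contribution with $v(\xs-\rho)$. Summing the two and rearranging delivers \eqref{approxIlonglemeq}. The main effort is the careful bookkeeping of the endpoint-at-infinity terms; once the asymptotic smallness $|l-v(x_{N_\ep})|\le\ep$ and $|v(x_1)|\le\ep$ is invoked, everything routine from Lemma \ref{approxIshortlem} carries over, since all error accumulations have the uniform form $O(\ep/\rho)=o_\ep(1)$.
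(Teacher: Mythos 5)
Your approach mirrors the paper's: both proofs break the long-range integral at the particle endpoints and at infinity, replace $v$ by its value at the nearest particle point on each cell (losing $O(\ep)$ via \eqref{oscvxixi+1}), and telescope via $v(x_i)-v(x_{i-1})=b_i\ep$. The paper organizes the bookkeeping as a four-way split $T_1,\ldots,T_4$ and literally recycles the inequality \eqref{315} from Lemma \ref{approxIshortlem} for the middle pieces, while you spell out the Abel summation on each half-line directly and absorb the tail $\int_{x_{N_\ep}}^{+\infty}$ into the telescope using $|l-v(x_{N_\ep})|=O(\ep)$; these are the same argument up to packaging.

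There is, however, a sign error you should fix. Your penultimate display asserts
\[
\frac{1}{\pi}\int_{\xs+\rho}^{+\infty}\frac{v(y)-v(\xs)}{(y-\xs)^2}\,dy=\frac{1}{\pi}\sum_{x_i>\xs+\rho}\frac{b_i\ep}{x_i-\xs}\;-\;\frac{1}{\pi}\,\frac{v(\xs+\rho)-v(\xs)}{\rho}+o_\ep(1),
\]
but your own telescoping produces the opposite sign on the boundary term. Indeed, collecting the pieces you compute, the boundary contribution from $\int_{\xs+\rho}^{x_M}$ is $+v(\xs+\rho)/\rho-v(\xs+\rho)/(x_M-\xs)$ and the separately subtracted $\int_{\xs+\rho}^{+\infty}v(\xs)/(y-\xs)^2\,dy=v(\xs)/\rho$ yields $-v(\xs)/\rho$, so the $\rho^{-1}$-contribution is $+(v(\xs+\rho)-v(\xs))/\rho$, not its negative. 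With the minus sign as written, summing the two half-lines and rearranging gives \eqref{approxIlonglemeq} with the sign of the second-difference term flipped, contradicting the lemma (and the paper's \eqref{i2leq}--\eqref{i2geq}, which show $\I^{2,\rho}[v](\xs)\leq \tfrac{1}{\pi}\sum+\tfrac{1}{\pi}(v(\xs+\rho)+v(\xs-\rho)-2v(\xs))/\rho+o_\ep(1)$ and the reverse). Replacing the minus by a plus in that display and in the corresponding left half-line makes the proposal consistent and correct; otherwise everything else (the oscillation bounds, the $O(\ep/\rho)$ control, the absorption of $b_M$ and $b_{N_\ep}$ into the sum) checks out.
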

\begin{proof}
First, we consider the following decomposition of $\I^{2,\rho}[v](\xs)$.
\beq\label{middletwoterms} \I^{2,\rho}[v](\xs) = \int_{-\infty}^{x_{1}} \frac{v(x) - v(\xs)}{(x-\xs)^2} dx + \int_{x_{1}}^{\xs-\rho} \frac{v(x) - v(\xs)}{(x-\xs)^2} dx + \int_{\xs+\rho}^{x_{N_\ep}} \frac{v(x) - v(\xs)}{(x-\xs)^2} dx + \int_{x_{N_\ep}}^{+\infty} \frac{v(x) - v(\xs)}{(x-\xs)^2} dx.
\eeq
Define the following integrals.
\beqs
\begin{split}
    T_1 &:= \int_{-\infty}^{x_{1}} \frac{v(x) - v(\xs)}{(x-\xs)^2} dx \quad \quad
    T_2 := \int_{x_{1}}^{\xs-\rho} \frac{v(x) - v(\xs)}{(x-\xs)^2} dx\\
    T_3 &:= \int_{\xs+\rho}^{x_{N_\ep}} \frac{v(x) - v(\xs)}{(x-\xs)^2} dx \quad \quad
    T_4 := \int_{x_{N_\ep}}^{+\infty} \frac{v(x) - v(\xs)}{(x-\xs)^2} dx.
\end{split}
\eeqs
Proceeding as in the proof of Lemma \ref{approxIshortlem} with  $\xs - \rho, \xs-r, \xs+r, \xs+\rho$ replaced by  $x_1, \xs-\rho, \xs-\rho, x_{N_\ep}$ respectively in \eqref{315}, we obtain
\beq \label{leqlong}
\begin{split}
    T_2 +T_3 &\leq \sum_{|x_i-\xs|\geq \rho} \dfrac{b_i\ep}{x_i-\xs} + \dfrac{v(\xs+\rho)+ v(\xs-\rho)-2v(\xs) }{\rho} \\
    &+\dfrac{v(\xs) - v(x_1)}{\xs-x_1} - \dfrac{v(\xs)-v(x_{N_\ep})}{\xs-x_{N_\ep}}+ \dfrac{12\ep}{\rho}.
\end{split}
\eeq
Similarly, by applying the same changing of variables to \eqref{geqbound}, one can show that 
\beq \label{geqlong}
\begin{split}
    T_2 + T_3 &\geq \sum_{|x_i-\xs|\geq \rho} \dfrac{b_i\ep}{x_i-\xs} + \dfrac{v(\xs+\rho) + v(\xs-\rho)-2v(\xs)}{\rho} \\
    &+\dfrac{v(\xs) - v(x_1)}{\xs-x_1} - \dfrac{v(\xs)-v(x_{N_\ep})}{\xs-x_{N_\ep}}- \dfrac{12\ep}{\rho},
\end{split}
\eeq
where the summation on the right-hand side is zero if there are not points $x_i$ such that  $|x_i-\xs|\ge \rho$. 
Next, using that
\beqs
\begin{split}
    \inf_{ (-\infty, x_1]}v \leq v(x) \leq v(x_1) + 2\ep & \quad \text{for} \ x\in (-\infty, x_1]\\
    v(x_{N_\ep}) - 2\ep \leq v(x) \leq \sup_{ [x_{N_\ep}, +\infty)} v & \quad \text{for} \ x\in [x_{N_\ep}, +\infty),
\end{split}
\eeqs
we have the following estimates
\beq \label{t1leq}
T_1 = \int_{-\infty}^{x_{1}} \frac{v(x) - v(\xs)}{(x-\xs)^2} dx \leq \int_{-\infty}^{x_{1}} \frac{v(x_1) + 2\ep - v(\xs)}{(x-\xs)^2} dx = -\dfrac{v(x_1) - v(\xs)}{x_1 - \xs} - \dfrac{2\ep}{x_1 - \xs},
\eeq
\beq \label{t1geq}
T_1 = \int_{-\infty}^{x_{1}} \frac{v(x) - v(\xs)}{(x-\xs)^2} dx \geq \int_{-\infty}^{x_{1}} \frac{\inf_{ (-\infty, x_1]}v - v(\xs)}{(x-\xs)^2} dx = -\dfrac{\inf_{ (-\infty, x_1]}v - v(\xs)}{x_1 - \xs},
\eeq 
and
\beq \label{t4leq}
T_4 = \int_{x_{N_\ep}}^{+\infty} \frac{v(x) - v(\xs)}{(x-\xs)^2} dx \leq \dfrac{\sup_{ [x_{N_\ep}, +\infty)} v - v(\xs)}{x_{N_\ep} - \xs},
\eeq
\beq \label{t4geq}
T_4 = \int_{x_{N_\ep}}^{+\infty} \frac{v(x) - v(\xs)}{(x-\xs)^2} dx \geq \dfrac{v(\xs) - v(x_{N_\ep})}{\xs - x_{N_\ep}} - \dfrac{2\ep}{x_{N_\ep}-\xs}.
\eeq
Combining \eqref{leqlong}, \eqref{t1leq} and \eqref{t4leq}, we obtain 
\beq \label{i2leq}
\begin{split}
    \I^{2,\rho}[v](\xs) &\leq \sum_{|x_i-\xs|\geq \rho} \dfrac{b_i\ep}{x_i-\xs} + \dfrac{v(\xs+\rho) + v(\xs-\rho)-2v(\xs)}{\rho}\\
    &+ \dfrac{\sup_{[x_{N_\ep}, +\infty)} v - v(x_{N_\ep})}{x_{N_\ep} - \xs} + \dfrac{2\ep}{\xs-x_1} + \dfrac{12\ep}{\rho}.
\end{split}
\eeq
Similarly, combining \eqref{geqlong}, \eqref{t1geq} and \eqref{t4geq}, we obtain 
\beq \label{i2geq}
\begin{split}
    \I^{2,\rho}[v](\xs) &\geq \sum_{|x_i-\xs|\geq \rho} \dfrac{b_i\ep}{x_i-\xs} + \dfrac{v(\xs+\rho) + v(\xs-\rho)-2v(\xs)}{\rho}\\
    &-\dfrac{v(x_1) - \inf_{ (-\infty, x_1]}v}{\xs-x_1} - \dfrac{2\ep}{x_{N_\ep} - \xs} - \dfrac{12\ep}{\rho}.
\end{split}
\eeq
Since $\xs - x_1 \geq \rho$, $x_{N_\ep} - \xs \geq\rho$, and
\beqs
0 \leq \sup_{ [x_{N_\ep}, +\infty)} v - v(x_{N_\ep}) \leq 2\ep \ \ \text{and} \ \ 0\leq v(x_1) - \inf_{ (-\infty, x_1]}v \leq 2\ep,
\eeqs
we conclude that 
\beq \label{lastetm}
\begin{split}
\dfrac{\sup_{ [x_{N_\ep}, +\infty)} v - v(x_{N_\ep})}{x_{N_\ep} - \xs} + \dfrac{2\ep}{\xs-x_1} &\leq \dfrac{2\ep}{\rho} + \dfrac{2\ep}{\rho} = \dfrac{4\ep}{\rho}\\
-\dfrac{v(x_1) - \inf_{ (-\infty, x_1]}v}{\xs-x_1} - \dfrac{2\ep}{x_{N_\ep} - \xs} &\geq 
-\dfrac{2\ep}{\rho} - \dfrac{2\ep}{\rho} = -\dfrac{4\ep}{\rho}.
\end{split}
\eeq
Finally, combining \eqref{i2leq}, \eqref{i2geq} and \eqref{lastetm} gives \eqref{approxIlonglemeq}, which completes the proof.
\end{proof}
The  following proposition is an immediate consequence of Lemma \ref{approxIshortlem} and Lemma \ref{approxIlonglem}.

\begin{prop}\label{apprIcor}
Assume that $v$ satisfies \eqref{vass} and let 
$x_i$ and $b_i$ be  defined as in \eqref{xi} and  \eqref{bidef}.
Let  $r=r_\ep=o_\ep(1)$ and $\ep/r=o_\ep(1)$. Then, for any  $ \xs\in(x_1+r,x_{N_\ep}-r)$,
\beqs\frac{1}{\pi}\sum_{|x_i-\xs|\ge r}\frac{\ep b_i}{x_i-\xs}=\I[v](\xs)+ o_\ep(1).
\eeqs
\end{prop}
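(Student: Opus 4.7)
The plan is to combine Lemma \ref{approxIshortlem} and Lemma \ref{approxIlonglem} in the most direct way: the two lemmas already decompose the regime of distances $|x_i-\xs|$ into ``medium'' ($r\le |x_i-\xs|\le\rho$) and ``far'' ($|x_i-\xs|>\rho$), and it only remains to choose a common scale $\rho$ at which to glue them together. The hypothesis $\xs\in (x_1+r,x_{N_\ep}-r)$ is precisely what is needed so that both lemmas apply when I take $\rho=r$; alternatively one may choose any intermediate $\rho\in[r,\min(\xs-x_1,x_{N_\ep}-\xs)]$ and the argument is the same.

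Concretely, Lemma \ref{approxIshortlem} with this choice of $\rho$ gives
\beqs
\frac{1}{\pi}\sum_{r\le |x_i-\xs|\le\rho}\frac{b_i\ep}{x_i-\xs}
=\I^{1,\rho}[v](\xs)+\frac{1}{\pi}\frac{v(\xs+\rho)+v(\xs-\rho)-2v(\xs)}{\rho}+o_\ep(1),
\eeqs
while Lemma \ref{approxIlonglem} yields
\beqs
\frac{1}{\pi}\sum_{|x_i-\xs|>\rho}\frac{b_i\ep}{x_i-\xs}
=\I^{2,\rho}[v](\xs)-\frac{1}{\pi}\frac{v(\xs+\rho)+v(\xs-\rho)-2v(\xs)}{\rho}+o_\ep(1).
\eeqs
Adding these two identities, the two boundary correction terms carry opposite signs and cancel exactly. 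On the left, the index sets combine into $\{i : |x_i-\xs|\ge r\}$ (any particle sitting precisely at $|x_i-\xs|=\rho$ is absorbed into one of the two sums with the correct value), while on the right the splitting $\I[v](\xs)=\I^{1,\rho}[v](\xs)+\I^{2,\rho}[v](\xs)$ recovers $\I[v](\xs)$.

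The total error is the sum of the two $o_\ep(1)$ terms and so remains $o_\ep(1)$, uniformly in $\xs$ under the stated condition. There is no genuine obstacle in this step: the substantive analytic work — in particular the careful control of the $v\in C^{1,1}$ correction $\tfrac{1}{r}(v(\xs+r)+v(\xs-r)-2v(\xs))$ and the tail terms at $\pm\infty$ — has already been absorbed into the two lemmas. What remains here is a purely algebraic bookkeeping combination, which is why the excerpt labels the proposition an immediate consequence.
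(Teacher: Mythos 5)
Your proof is correct and matches the paper's: the text states Proposition \ref{apprIcor} is an immediate consequence of Lemma \ref{approxIshortlem} and Lemma \ref{approxIlonglem}, which is exactly the adding-and-cancelling argument you carry out. (The only tiny bookkeeping you leave implicit is the excluded index $i_0$ in Lemma \ref{approxIshortlem}: if $|x_{i_0}-\xs|\ge r$ that single term is bounded by $\ep/r=o_\ep(1)$, so it is harmlessly absorbed into the error.)
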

\begin{rem}
Heuristically, the cancellation of the quantity 
$$\frac{1}{\pi}\frac{v(\xs+\rho)+v(\xs-\rho)-2v(\xs)}{\rho}$$ when summing up \eqref{I1rhoapplemeq} and \eqref{approxIlonglemeq} comes from the fact that 
\beq\label{heuristic1sect4}\sum_{r<|x_i-\xs|<\rho}\frac{b_i\ep}{x_i-\xs}\simeq \int_{r<|x-\xs|<\rho}\frac{v_x(x)}{x-\xs}dx\eeq and 
\beq\label{heuristic2sect4}\sum_{|x_i-\xs|>\rho}\frac{b_i\ep}{x_i-\xs}\simeq \int_{|x-\xs|>\rho}\frac{v_x(x)}{x-\xs}dx,\eeq 
and that by performing an integration by parts, 
\beqs\begin{split}\int_{r<|x-\xs|<\rho}\frac{v_x(x)}{x-\xs}dx&=\int_{r<|x-\xs|<\rho}\frac{v(x)-v(\xs)}{(x-\xs)^2}dx+\frac{v(\xs+\rho)+v(\xs-\rho)-2v(\xs)}{\rho}
\\&-\frac{v(\xs+r)+v(\xs-r)-2v(\xs)}{r},
\end{split}
\eeqs
and 
\beqs\begin{split}\int_{|x-\xs|>\rho}\frac{v_x(x)}{x-\xs}dx&=\int_{|x-\xs|>\rho}\frac{v(x)-v(\xs)}{(x-\xs)^2}dx-\frac{v(\xs+\rho)+v(\xs-\rho)-2v(\xs)}{\rho}.\end{split}
\eeqs
For the heuristics of \eqref{heuristic1sect4} and \eqref{heuristic2sect4} we refer to Section 2.1 in \cite{patsan}. 
\end{rem}
\begin{rem}
Notice that in Lemma \ref{approxIshortlem}, Lemma \ref{approxIlonglem} and Proposition \ref{apprIcor}, the error $o_\ep(1)$ satisfies
\beq\label{o1bound}o_\ep(1)=O(r)+O\left(\frac{\ep}{r}\right).\eeq
\end{rem}

\begin{lem}\label{lemmaerrorshortdistance}
Under the assumptions  of Lemma  \ref{approxIshortlem}, let 
$\xs=x_{i_0}+\ep\gamma$, where $x_{i_0}$ is the closest point to $\xs$. Then, there exists $r=r_\ep $ satisfying  $\ep^\frac{5}{8}\le r\le c\ep^\frac{1}{2}$, with $c$ depending on the $C^{1,1}$ norm of $v$, such that 
\beq\label{shortdostaneqlem}\frac{1}{\pi}\sum_{\stackrel{i\neq i_0}{|x_i-\xs|< r}}\frac{\ep b_i}{x_i-\xs}= O(\ep^\frac{1}{8})+O(\gamma).\eeq
\end{lem}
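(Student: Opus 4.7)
My plan is to reduce the problem to the case $\gamma=0$ by a Taylor-type expansion in the displacement $\xs-x_{i_0}=\ep\gamma$, and then to estimate the resulting $\gamma=0$ sum by applying the short-range approximation (Lemma \ref{approxIshortlem}) at the particle $x_{i_0}$ itself, handling a small residual sum of "very close" particles directly.

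\textbf{Reduction to $\gamma=0$.} Starting from the algebraic identity
$$\frac{1}{x_i-\xs}=\frac{1}{x_i-x_{i_0}}+\frac{\ep\gamma}{(x_i-x_{i_0})(x_i-\xs)},$$
the sum decomposes as
$$\sum_{\stackrel{i\ne i_0}{|x_i-\xs|<r}}\frac{b_i\ep}{x_i-x_{i_0}}+\ep\gamma\sum_{\stackrel{i\ne i_0}{|x_i-\xs|<r}}\frac{b_i\ep}{(x_i-x_{i_0})(x_i-\xs)}.$$
Since $x_{i_0}$ is the closest particle to $\xs$, the Voronoi property gives $|x_i-\xs|\geq\frac12|x_i-x_{i_0}|$ for every $i\neq i_0$, so the second sum is bounded in absolute value by $2\sum\ep/(x_i-x_{i_0})^2$; by \eqref{i/k^2sum} this is $O(1/\ep)$, and multiplication by $\ep\gamma$ yields a correction of size $O(\gamma)$. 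Moreover, the symmetric difference between $\{|x_i-\xs|<r\}$ and $\{|x_i-x_{i_0}|<r\}$ is contained in a pair of intervals of width $2\ep|\gamma|$ near $x_{i_0}\pm r$ and contains (by \eqref{proplippart1}) at most $O(1+|\gamma|)$ particles, each contributing at most $\ep/r\leq\ep^{3/8}$; the resulting error is also $O(\gamma)$.

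\textbf{The sum at $\gamma=0$.} It remains to bound $T:=\sum_{i\neq i_0,\,|x_i-x_{i_0}|<r}b_i\ep/(x_i-x_{i_0})$. Fix an intermediate scale $r'=\ep^{1/2}$, which satisfies the hypotheses of Lemma \ref{approxIshortlem}. Applying that lemma with base point $\xs=x_{i_0}$, lower cutoff $r'$, and upper cutoff $r$ yields
$$\frac{1}{\pi}\sum_{\stackrel{i\neq i_0}{r'\le|x_i-x_{i_0}|\le r}}\frac{b_i\ep}{x_i-x_{i_0}}=\I^{1,r}[v](x_{i_0})+\frac{v(x_{i_0}+r)+v(x_{i_0}-r)-2v(x_{i_0})}{\pi r}+o_\ep(1),$$
and by the $C^{1,1}$ bound on $v$ both explicit terms are $O(r)$, while $o_\ep(1)=O(r')+O(\ep/r')=O(\ep^{1/2})$ by \eqref{o1bound}. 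For the residual sum over $|x_i-x_{i_0}|<r'$ (which, by Lemma \ref{lemdistxi}, involves $O(\ep^{-1/2})$ particles, each at distance at least $\ep/L$ from $x_{i_0}$), I would pair $x_{i_0+k}$ with $x_{i_0-k}$: the $C^{1,1}$ Taylor expansion of $v$ at $x_{i_0}$ makes the positions $x_{i_0\pm k}-x_{i_0}$ antisymmetric to leading order, so the main $\ep/d_k^\pm$ contributions cancel, leaving an $O(\ep)$ error per pair from the quadratic remainder and a total of $O(\ep^{1/2})$. Near a possible critical point of $v$ inside the residual ball the level-set spacing degrades to $\sqrt\ep$ and the $b_i$'s alternate, but correspondingly fewer pairs fit, and a direct summation gives $O(\sqrt\ep)$. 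Choosing $r\in[\ep^{5/8},c\ep^{1/2}]$ thus yields $T=O(\ep^{1/2})\subseteq O(\ep^{1/8})$, and the claimed bound follows.

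\textbf{Main obstacle.} The delicate point is the direct estimate of the very-short-range residual sum, where Lemma \ref{approxIshortlem} is not applicable: cancellation between opposite-side level crossings must be extracted explicitly from the $C^{1,1}$ regularity of $v$, and the case of a critical point of $v$ near $x_{i_0}$ (where the $b_i$'s alternate and level crossings cluster on the $\sqrt\ep$ scale) requires separate treatment. The exponent $1/8$ in the stated bound is quite loose; the argument in fact delivers $O(\ep^{1/2})$, and the lower restriction $r\ge\ep^{5/8}$ is needed only to ensure that $\ep/r$ stays well below the final error allowance.
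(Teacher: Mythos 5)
Your reduction to the case $\gamma=0$, via the algebraic identity $\frac{1}{x_i-\xs}=\frac{1}{x_i-x_{i_0}}+\frac{\ep\gamma}{(x_i-x_{i_0})(x_i-\xs)}$ combined with \eqref{i/k^2sum} and a particle count in the symmetric difference, is essentially the step the paper performs at the end of its proof (cf.\ \eqref{sumseriesdiffexsxi0}--\eqref{lastlemmagammaxixi0}). The pairing idea $x_{i_0+k}\leftrightarrow x_{i_0-k}$ for the short-range residual also matches the paper's strategy. The gap is in the quantitative estimate of the pairing error: your claim ``$O(\ep)$ per pair, $O(\ep^{1/2})$ total'' is not correct when $v_x(x_{i_0})$ is of order one.

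Writing $a:=|v_x(x_{i_0})|$ and $d_k^\pm:=\pm(x_{i_0\pm k}-x_{i_0})$, the $C^{1,1}$ Taylor expansion gives $d_k^\pm=(k\ep+O(r'^2))/a$ with $|O(r'^2)|\le Kr'^2/2$, so the $k$-th paired contribution satisfies
\[
\left|\frac{\ep}{d_k^+}-\frac{\ep}{d_k^-}\right|\;=\;\ep\,\frac{|d_k^--d_k^+|}{d_k^+d_k^-}\;\lesssim\;\frac{a\,r'^{\,2}}{k^{2}\ep},
\]
and the paired part of the residual sums to $O(a\,r'^{\,2}/\ep)$. With a fixed $r'\sim\ep^{1/2}$ this is $O(a)$, which is $O(1)$ rather than $o(1)$ whenever $v_x(x_{i_0})$ is bounded away from zero; your $O(\ep^{1/2})$ conclusion holds only when $a\lesssim\ep^{1/2}$. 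This is precisely what the paper's three-case analysis is built to handle: the radius $r$ is chosen \emph{adaptively} in $|v_x(x_{i_0})|$. When $a\lesssim\ep^{1/2}$ the spacing $x_{i_0\pm1}-x_{i_0}\gtrsim\ep^{1/2}$ forces the sum to be empty for $r\sim\ep^{1/2}$; when $\ep^{1/2}\lesssim a\lesssim\ep^{1/2-\tau}$ one keeps $r\sim\ep^{1/2}$ and $a\,r^2/\ep$ is small because $a$ is; when $a\gtrsim\ep^{1/2-\tau}$ one shrinks $r$ to $\sim\ep^{(1+\tau)/2}$ so that $a\,r^2/\ep\lesssim\ep^{\tau}$. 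The admissible lower bound $r\ge\ep^{5/8}$ forces $\tau\le1/4$, which (together with an unpaired ``tail'' of size $O(\ep^{\tau/2})$ that your sketch does not account for) caps the achievable rate near $\ep^{1/8}$; so the exponent $1/8$ is not slack, and $O(\ep^{1/2})$ is not attainable by this route. Finally, note a coherence issue: your fixed $r'=\ep^{1/2}$ is not necessarily below the outer radius $r\le c\ep^{1/2}$ (the constant $c$ is of order $K^{-1/2}$ in the paper's Case 2), so the split $\{r'\le|x_i-x_{i_0}|\le r\}$ plus residual can be vacuous as written.
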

\begin{proof}
The proof is a minor modification of the proof of Lemma 4.6 in \cite{patsan}. 
Let $K>0$ be such that $\|v_{xx}\|_{L^\infty(\R)}\leq K$. We consider  three cases.

\medskip
{\em Case 1: $|v_x({x_{i_0}})|\le 12 K^\frac12\ep^\frac12$.} 

By Taylor expansion and Young's inequality, we have  
\beqs\begin{split} \ep = |b_{i_0+1}\ep|=|v({x_{i_0+1}})-v({x_{i_0}})|&\leq |v_x({x_{i_0}})||x_{i_0+1}-x_{i_0}|+ \frac{K}{2}(x_{i_0+1}-x_{i_0})^2
\\&\leq \frac{v_x({x_{i_0}})^2}{2 (12)^2K}+\left(\frac{12^2K}{2}+\frac{K}{2}\right) (x_{i_0+1}-x_{i_0})^2\\&\leq \frac\ep2 +\frac{12^2+1}{2}K(x_{i_0+1}-x_{i_0})^2,
\end{split}
\eeqs
which implies
$$x_{i_0+1}-x_{i_0}\geq c\ep^\frac{1}{2},$$
for some $c>0$ independent of $\ep$. 
Similarly, one can prove that 
$$x_{i_0}-x_{i_0-1}\geq c\ep^\frac{1}{2}.$$
Since $x_{i_0}$ is the closest point to $\xs$, we must have that $\xs-x_{i_0-1}\geq c\ep^\frac{1}{2}/2$ and 
$x_{i_0+1}-\xs\geq c\ep^\frac{1}{2}/2$.
Therefore, if we choose $r=r_\ep=c\ep^\frac{1}{2}/4$, there is  no index  $i\neq i_0$ for which 
$|\xs-x_i|\leq r$ and thus  \eqref{shortdostaneqlem} is trivially true. 

\medskip

Next,  we show that 
\beq\label{centeredserier}\frac{1}{\pi}\sum_{\stackrel{i\neq i_0}{|x_i-\xs|< r}}\frac{\ep b_i}{x_i-x_{i_0}}= O(\ep^\frac{1}{8}).
\eeq
We consider  two more cases. 
\medskip

{\em Case 2:  $12 K^\frac12\ep^\frac12\leq |v_x({x_{i_0}})|\le \ep^{\frac12-\tau}$, for some $\tau\in (0,1/4)$.}

If $|\xs-x_{i_0}|\geq \ep^\frac{1}{2}/(4K^\frac{1}{2})$, then we choose 
$r=\ep^\frac{1}{2}/(8K^\frac{1}{2})$ and, as in Case 1, there is  no index  $i\neq i_0$ for which 
$|\xs-x_i|\leq r$. Thus \eqref{shortdostaneqlem} holds  true. 

Now, assume $|\xs-x_{i_0}|\leq \ep^\frac{1}{2}/(4K^\frac{1}{2})$ and define
\beq\label{randep}r:=\frac{\ep^\frac{1}{2}}{2K^\frac{1}{2}}\geq 2 |\xs-x_{i_0}|.\eeq

We claim that $v$ is monotone in $(\xs-r,\xs+r)$, where $r$ is defined in \eqref{randep}. To show this, suppose that $v_x(x_{i_0}) \geq 12 K^\frac12\ep^\frac12$. Then, for $x\in (\xs-r,\xs+r)$,
\beqs
|x-x_{i_0}| \leq |x-\xs| + |\xs- x_{i_0}| < r + \dfrac{r}{2} = \dfrac{3r}{2}.
\eeqs
Now, we have that
\beqs
v_x(x) - v_x(x_{i_0}) \geq -K|x-x_{i_0}| \geq -\dfrac{3rK}{2},
\eeqs
which gives us 
\beqs
v_x(x) \geq v_x(x_{i_0})  -\dfrac{3rK}{2} \geq 12K^\frac12 \ep^\frac12 - \dfrac{3K^\frac12\ep^\frac12}{4} = \dfrac{45 K^\frac12 \ep^\frac12}{4}>0.
\eeqs
Hence, $v$ is monotone increasing   in $(\xs-r,\xs+r)$.  
Let $M_r$ and $N_r$ be respectively the smallest and the larger index $i$ such that $x_i\in (\xs-r,\xs+r)$, that is
\beq\label{xnr-xs}\begin{split}
&x_{M_r-1}\le \xs-r< x_{M_r}\\&
x_{N_r}< \xs+r\le x_{N_r+1}.
\end{split}\eeq
By the monotonicity of $v$ in $(x-r,x+r)$,
$$ v(x_{N_r})=v(x_{M_r})+\ep (N_r-M_r) \quad\text{and}\quad  v(x_{i_0})=v(x_{M_r})+\ep (i_0-M_r),$$ from which
\beq\label{allthevxdfejtiypfbis}
 v(x_{N_r})+v(x_{M_r})-2 v(x_{i_0})=\ep(N_r+M_r-2i_0).
\eeq
Again by the monotonicity of $v$ in $(x-r,x+r)$ and by making a Taylor expansion, we get, for $i=M_r,\ldots,N_r$
\beq\label{allthevxdfejtiypftris} \ep(i-i_0)=v(x_i)-v({x_{i_0}})=v_x({x_{i_0}})(x_i-x_{i_0})+O(r^2),\eeq
where $|O(r^2)|\leq K(2r)^2/2=\ep/2$,
from which 
\beq\label{distancexixi0} x_i-x_{i_0}=\frac{\ep(i-i_0)+O(r^2)}{v_x({x_{i_0}})}.\eeq
Therefore, we can write
\beq\label{sumx-xi0smallprima}\begin{split}\sum_{i\neq i_0\atop |x_i-\xs|< r}\frac{\ep}{x_i-x_{i_0}}&=\sum_{i=M_r\atop i\neq i_0}^{N_r}\frac{v_x(x_{i_0})\ep}{\ep(i-i_0)+O(r^2)}\\&=
\sum_{i=M_r\atop }^{i_0-1}\frac{v_x(x_{i_0})\ep}{\ep(i-i_0)+O(r^2)}+\sum_{i=i_0+1}^{N_r}\frac{v_x(x_{i_0})\ep}{\ep(i-i_0)+O(r^2)}.
\end{split}
\eeq
Now, suppose without loss of generality that $N_r-i_0\leq i_0-M_r$.
Then, 
\beq\label{pedalssonno}\begin{split}
&\sum_{i=M_r\atop }^{i_0-1}\frac{\ep}{\ep(i-i_0)+O(r^2)}+\sum_{i=i_0+1}^{N_r}\frac{\ep}{\ep(i-i_0)+O(r^2)}\\&=
\sum_{k=1}^{i_0-M_r}\frac{\ep}{-\ep k+O(r^2)}+\sum_{k=1}^{N_r-i_0}\frac{\ep}{\ep k+O(r^2)}\\&
=\sum_{k=1}^{N_r-i_0}\ep\left(\frac{1}{-\ep k+O(r^2)}+\frac{1}{\ep k+O(r^2)}\right)+\sum_{k=N_r-i_0+1}^{i_0-M_r}\frac{\ep}{-\ep k+O(r^2)}.
\end{split}
\eeq
We can bound the first term of the right hand-side of the last equality  as follows
\beq\label{serveperclaim3}\begin{split}
\left|\sum_{k=1}^{N_r-i_0}\ep\left(\frac{1}{-\ep k+O(r^2)}+\frac{1}{\ep k+O(r^2)}\right)\right|
&=\frac{2|O(r^2)|}{\ep}\left|\sum_{k=1}^{N_r-i_0}\frac{1}{(-k+\frac{O(r^2)}{\ep})(k+\frac{O(r^2)}{\ep})}\right|\\&
\leq\sum_{k=1}^\infty \frac{1}{k^2-\frac14}\\&
=C,
\end{split}
\eeq
where we used that $|O(r^2)|/\ep\leq 1/2$. 
Therefore, 
\beq\label{pedalssonnobis}\begin{split}
v_x(x_{i_0})\left|\sum_{k=1}^{N_r-i_0}\ep\left(\frac{1}{-\ep k+O(r^2)}+\frac{1}{\ep k+O(r^2)}\right)\right|\leq Cv_x(x_{i_0})\leq C\ep^{\frac12-\tau}.
\end{split}
\eeq
Next, by using that $\sum_{k=n}^m1/k\leq (m-n+1)/n$,  \eqref{allthevxdfejtiypfbis} and the first equality in \eqref{allthevxdfejtiypftris}, we get
\beq\label{secondpieceestilemii0}\begin{split}
\left|\sum_{k=N_r-i_0+1}^{i_0-M_r}\frac{\ep}{-\ep k+O(r^2)}\right|&\leq \sum_{k=N_r-i_0+1}^{i_0-M_r}\frac{\ep}{\ep k-|O(r^2)|}\\&\leq
\frac{-(\ep N_r+\ep M_r-2\ep i_0)}{\ep (N_r+1)-\ep i_0-|O(r^2)|}\\&
=\frac{-(v(x_{N_r})+v(x_{M_r})-2v(x_{i_0}))}{v(x_{N_r})-v(x_{i_0})+\ep-|O(r^2)|}\\&
\le \frac{-(v(x_{N_r})+v(x_{M_r})-2v(x_{i_0}))}{v(x_{N_r})-v(x_{i_0})}.
\end{split}
\eeq
By   \eqref{oscvxixi+1}, the fact that $v(x_{M_r})\ge v(\xs-r)$  and the regularity of $v$, 
\beq\label{topestinrmri0}
0\le-( v(x_{N_r})+v(x_{M_r})-2v(x_{i_0}))\leq -(v(\xs+r)+ v(\xs-r)-2v(\xs))+6\ep\leq Kr^2+6\ep\le C\ep.
\eeq 
Now, by using that  $v_x(x_{i_0})\ge 12K^\frac12\ep^\frac12$,  that $|\xs-x_{i_0}|\leq r/2$,  and   \eqref{oscvxixi+1}, we get  
\beq\label{bottomestinrmri0}\begin{split}
v(x_{N_r})-v(x_{i_0})&\geq v(\xs+r)-v(x_{i_0})-2\ep\\&\ge v_x(x_{i_0})(r-|\xs-x_{i_0}|)-\frac{K}{2}(2r)^2-2\ep\\&
\ge  v_x(x_{i_0})\frac r2-\frac32\ep \\&
=v_x(x_{i_0})\frac r2-12K^\frac12\ep^\frac12\frac{r}{4}\\&
\ge v_x(x_{i_0})\frac{r}{4}\\&
= v_x(x_{i_0})\frac{\ep^\frac12}{8K^\frac12}.
\end{split}
\eeq
From \eqref{secondpieceestilemii0}, \eqref{topestinrmri0} and \eqref{bottomestinrmri0}, we infer that
\beq\label{secondpiecefinallemii0}
\left|\sum_{i=i_0-M_r+1}^{N_r-i_0}\frac{\ep v_x(x_{i_0})}{-\ep k+O(r^2)}\right|\leq \frac{v_x(x_{i_0})C\ep 8K^\frac12}{v_x(x_{i_0})\ep^\frac12}\leq
C\ep^\frac12.
\eeq
Finally, \eqref{sumx-xi0smallprima}, \eqref{pedalssonno}, \eqref{pedalssonnobis} and \eqref{secondpiecefinallemii0} imply
\beqs \left|\frac{1}{\pi}\sum_{i\neq i_0\atop |x_i-\xs|< r}\frac{\ep}{x_i-x_{i_0}}\right|\leq C\ep^{\frac12-\tau}\leq C\ep^\frac{1}{4},
\eeqs
which gives \eqref{centeredserier}.

One can similarly show \eqref{centeredserier} when $v_x(x_{i_0}) \leq -12K^\frac12 \ep^\frac12$. 

\medskip

{\em Case 3:  $|v_x({x_{i_0}})|\ge \ep^{\frac12-\tau}$, for some $\tau\in(0,1/4)$.}

Arguing as before,   we can assume that $|\xs-x_{i_0}|\le \ep^{\frac{1+\tau}{2}}$. 
Then, we define
\beq\label{rcase3}r:=2\ep^{\frac{1+\tau}{2}}\ge 2|\xs-x_{i_0}|.\eeq

Notice that $r\ge \ep^\frac{5}{8}$.

Assume that $v_x({x_{i_0}})\ge \ep^{\frac12-\tau}$, then as in Case 2 we can prove that $v$ is monotone increasing in the interval $(x-r,x+r)$. 

Let $M_r$ and $N_r$ be defined as in Case 2.  Assume,    without loss of generality,  that $N_r-i_0\le i_0-M_r$. Then   as before, we  write 
\beq\label{sumx-xi0smallprimacase3}\begin{split}\sum_{i\neq i_0\atop |x_i-\xs|< r}\frac{\ep}{x_i-x_{i_0}}&=
\sum_{k=1}^{N_r-i_0}\ep v_x(x_{i_0})\left(\frac{1}{-\ep k+O(r^2)}+\frac{1}{\ep k+O(r^2)}\right)+ \sum_{k=N_r-i_0+1}^{i_0-M_r}\frac{\ep v_x(x_{i_0})}{-\ep k+O(r^2)}.
\end{split}
\eeq
By \eqref{serveperclaim3} and the definition \eqref{rcase3}  of $r$, 
\beq\label{ccase3step1}
\left|\sum_{k=1}^{N_r-i_0}\ep v_x(x_{i_0})\left(\frac{1}{-\ep k+O(r^2)}+\frac{1}{\ep k+O(r^2)}\right)\right|\leq 
 Cv_x(x_{i_0})\frac{|O(r^2)|}{\ep}\leq C\ep^\tau.
\eeq
By \eqref{secondpieceestilemii0}, \eqref{topestinrmri0}
 and \eqref{bottomestinrmri0},  and by using that $ v_x({x_{i_0}})\ge C\ep^{\frac12-\tau}$ and \eqref{rcase3}, we get
  \beq\label{secondpieceestilemii0case3}\begin{split}
\left|\sum_{k=N_r-i_0+1}^{i_0-M_r}\frac{\ep}{-\ep k+O(r^2)}\right|&\leq\frac{C\ep}{ v_x(x_{i_0})\frac r2-\frac32\ep }\leq C\ep^\frac{\tau}{2},
\end{split}
\eeq
for $\ep$ small enough (independently of $\xs$).
Estimates  \eqref{sumx-xi0smallprimacase3},\eqref{ccase3step1} and  \eqref{secondpieceestilemii0case3} imply
\beqs\left|\sum_{i\neq i_0\atop |x_i-\xs|< r}\frac{\ep}{x_i-x_{i_0}}\right|\leq C\ep^\frac{\tau}{2}\leq C\ep^\frac{1}{8},
\eeqs
which gives \eqref{centeredserier}. One can similarly show \eqref{centeredserier} when $v_x({x_{i_0}})\le -\ep^{\frac12-\tau}$

Finally, to prove \eqref{shortdostaneqlem}, we estimate
\beq\label{sumseriesdiffexsxi0}\begin{split}\left|\sum_{i\neq i_0\atop |x_i-\xs|< r}\frac{\ep}{x_i-\xs}
-\sum_{i\neq i_0\atop |x_i-x_{i_0}|<r}\frac{\ep}{x_i-x_{i_0}}\right|&=\left|
\sum_{i\neq i_0\atop |x_i-\xs|< r}\frac{\ep^2\gamma}{(x_i-\xs)(x_i-x_{i_0})}\right|.
\end{split}\eeq
Assume, without loss of generality that $\xs=x_{i_0}+\ep\gamma$, with $\gamma\ge0$, that is, $\xs\in[x_{i_0},x_{i_0+1})$.
Then, 
\beqs |x_i-\xs|\ge\begin{cases}|x_i-x_{i_0}|&\text{if }i\leq i_0-1\\
\frac{x_{i_0+1}-x_{i_0}}{2}&\text{if }i=i_0+1 \\
x_i-x_{i_0+1}&\text{if }i\geq i_0+2.
\end{cases}
\eeqs
Moreover, by \eqref{proplippart1}, $x_{i_0+1}-x_{i_0}\ge \ep L^{-1}$. Therefore, 
\beq\label{lastlemmagammaxixi0}\left|
\sum_{i\neq i_0\atop |x_i-\xs|< r}\frac{\ep^2\gamma}{(x_i-\xs)(x_i-x_{i_0})}\right|\le \sum_{i\leq i_0-1}\frac{\ep^2\gamma}{(x_i-x_{i_0})^2}
+2L^2\gamma+ \sum_{i\geq i_0+2}\frac{\ep^2\gamma}{(x_i-x_{i_0+1})^2}\le C\gamma,
\eeq
where in the last inequality we used \eqref{i/k^2sum}.
By \eqref{sumseriesdiffexsxi0} and \eqref{lastlemmagammaxixi0} we get 
\beqs
\left|\sum_{i\neq i_0\atop |x_i-\xs|<r}\frac{\ep}{x_i-\xs}
-\sum_{i\neq i_0\atop |x_i-x_{i_0}|<r}\frac{\ep}{x_i-x_{i_0}}\right|\le C\gamma,
\eeqs
which together with \eqref{centeredserier} gives  \eqref{shortdostaneqlem}. 
\end{proof}

The following proposition is an immediate consequence of  Lemma \ref{approxIshortlem}, Proposition \ref{apprIcor}
and Lemma \ref{lemmaerrorshortdistance}.
\begin{prop}\label{apprIcorall}
Assume that $v$ satisfies \eqref{vass} and let 
$x_i$ and $b_i$ be defined as in \eqref{xi} and  \eqref{bidef}. Then, there  exists $c>0$ depending on the $C^{1,1}$ norm of $v$ such that if $\rho\ge c\ep^\frac12$ 
and $\xs\in(x_1+\rho,x_{N_\ep}-\rho)$, 
$\xs=x_{i_0}+\ep\gamma$, where $x_{i_0}$ is the closest point to $\xs$, then
\beq\label{apprIcorallest1}\frac{1}{\pi}\sum_{\stackrel{i\neq i_0}{|x_i-\xs|\le \rho}}\frac{\ep b_i}{x_i-\xs}=\I^{1,\rho}[v](\xs)+O(\gamma)+\frac{1}{\pi}\frac{v(\xs+\rho)+v(\xs-\rho)-2v(\xs)}{\rho}+o_\ep(1),
\eeq
and
\beq\label{apprIcorallest2}\frac{1}{\pi}\sum_{i\neq i_0}\frac{\ep b_i}{x_i-\xs}=\I[v](\xs)+ o_\ep(1)+O(\gamma).
\eeq
\end{prop}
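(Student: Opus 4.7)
\textbf{Proof plan for Proposition \ref{apprIcorall}.}
The plan is to combine Lemma \ref{approxIshortlem}, Proposition \ref{apprIcor}, and Lemma \ref{lemmaerrorshortdistance} by splitting each sum at the distance $r=r_\ep$ provided by Lemma \ref{lemmaerrorshortdistance}. First I would fix $r$ with $\ep^{5/8}\le r\le c\ep^{1/2}$, where $c$ is the constant from Lemma \ref{lemmaerrorshortdistance} depending on $\|v\|_{C^{1,1}(\R)}$. This $r$ satisfies $r=o_\ep(1)$ and $\ep/r\le \ep^{3/8}=o_\ep(1)$, so it is an admissible choice for Lemma \ref{approxIshortlem} and Proposition \ref{apprIcor}. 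I would also choose the constant $c$ in the hypothesis $\rho\ge c\ep^{1/2}$ large enough that $\rho\ge r$ automatically.

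For \eqref{apprIcorallest1}, I would split
\[
\sum_{\stackrel{i\neq i_0}{|x_i-\xs|\le\rho}}\frac{\ep b_i}{x_i-\xs}
=\sum_{\stackrel{i\neq i_0}{|x_i-\xs|<r}}\frac{\ep b_i}{x_i-\xs}
+\sum_{\stackrel{i\neq i_0}{r\le|x_i-\xs|\le\rho}}\frac{\ep b_i}{x_i-\xs}.
\]
The first sum is $O(\ep^{1/8})+O(\gamma)=o_\ep(1)+O(\gamma)$ by Lemma \ref{lemmaerrorshortdistance}. The second sum is handled by Lemma \ref{approxIshortlem}, which produces the terms $\I^{1,\rho}[v](\xs)$ and $\pi^{-1}(v(\xs+\rho)+v(\xs-\rho)-2v(\xs))/\rho$ plus an $o_\ep(1)$ error. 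Adding the two contributions yields \eqref{apprIcorallest1}.

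For \eqref{apprIcorallest2}, the key preliminary observation is that, since $x_{i_0}$ is the closest particle to $\xs$ and $x_{i_0+1}-x_{i_0-1}\ge 2\ep/L$ by \eqref{proplippart1}, we have $|\xs-x_{i_0}|=\ep|\gamma|\le \ep/(2L)$, hence $|\gamma|\le 1/(2L)$ and in particular $\ep|\gamma|<r$ for all sufficiently small $\ep$. Consequently $x_{i_0}$ is automatically excluded from the set $\{|x_i-\xs|\ge r\}$, and the decomposition
\[
\sum_{i\neq i_0}\frac{\ep b_i}{x_i-\xs}
=\sum_{\stackrel{i\neq i_0}{|x_i-\xs|<r}}\frac{\ep b_i}{x_i-\xs}
+\sum_{|x_i-\xs|\ge r}\frac{\ep b_i}{x_i-\xs}
\]
is legitimate. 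Applying Lemma \ref{lemmaerrorshortdistance} to the first sum and Proposition \ref{apprIcor} to the second gives $O(\gamma)+o_\ep(1)+\I[v](\xs)+o_\ep(1)$, which is \eqref{apprIcorallest2}.

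The only point requiring care, rather than being a genuine obstacle, is verifying the compatibility of the three cutoff scales $\ep$, $r$, and $\rho$: one must check that $r$ can be chosen simultaneously to satisfy the constraints in Lemma \ref{lemmaerrorshortdistance} (namely $\ep^{5/8}\le r\le c\ep^{1/2}$) and the scaling assumptions of Lemma \ref{approxIshortlem} and Proposition \ref{apprIcor}, and that the extra error $O(\ep^{1/8})$ produced by the short-range lemma is genuinely $o_\ep(1)$ and can be absorbed. Both are immediate from the chosen exponents, so the proposition follows by adding the pieces.
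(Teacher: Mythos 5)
Your proof follows the same route as the paper: fix the cutoff $r$ produced by Lemma \ref{lemmaerrorshortdistance}, handle the short-range part $\{|x_i-\xs|<r\}$ with that lemma, and handle the remaining range with Lemma \ref{approxIshortlem} (for \eqref{apprIcorallest1}) or Proposition \ref{apprIcor} (for \eqref{apprIcorallest2}). The decomposition, the lemmas invoked, and the check that the scales $\ep^{5/8}\le r\le c\ep^{1/2}\le\rho$ are compatible are all exactly as in the paper's proof.

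One step in your argument, however, is wrong as stated. You claim that because $x_{i_0+1}-x_{i_0-1}\ge 2\ep/L$ by \eqref{proplippart1}, the closest particle must satisfy $|\xs-x_{i_0}|=\ep|\gamma|\le\ep/(2L)$. But \eqref{proplippart1} is a \emph{lower} bound on the spacing, not an upper bound: if two consecutive particles are far apart, a point between them can be at distance much larger than $\ep/L$ from the nearest of the two. Hence $\ep|\gamma|<r$ need not hold, and $i_0$ may in fact lie in the set $\{|x_i-\xs|\ge r\}$ appearing in Proposition \ref{apprIcor}. The correct way to reconcile the two sums is to observe that when $\ep|\gamma|=|\xs-x_{i_0}|\ge r$ the near-range sum in Lemma \ref{lemmaerrorshortdistance} is empty (cf.\ Remark \ref{vcopntstanti1rem}), while the extra term $\ep b_{i_0}/(x_{i_0}-\xs)$ present in Proposition \ref{apprIcor}'s sum but absent from your decomposition has magnitude $\le\ep/r\le\ep^{3/8}=o_\ep(1)$ and is absorbed into the error. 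With that correction your proof is complete and matches the paper's.
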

\begin{proof}

Fix $\xs$ and let $r$ and $c$ be given by Lemma  \ref{lemmaerrorshortdistance}. Then $\ep^\frac{5}{8}\le r\le c\ep^\frac{1}{2}\le\rho$.
By Lemma \ref{approxIshortlem} and recalling \eqref{o1bound},
\beqs \frac{1}{\pi}\sum_{i\neq i_0\atop r\le |x_i-\xs|\le \rho}\frac{\ep}{x_i-\xs} =\I^{1,\rho}[v](\xs)+\frac{1}{\pi}\frac{v(\xs+\rho)+v(\xs-\rho)-2v(\xs)}{\rho}+O\left(\ep^\frac{3}{8}\right).
\eeqs
Combining this estimate with \eqref{shortdostaneqlem} yields \eqref{apprIcorallest1}. 

Similarly, by Proposition \ref{apprIcor} and Lemma \ref{lemmaerrorshortdistance}, we get \eqref{apprIcorallest2}.
\end{proof}

\begin{rem}\label{vcopntstanti1rem}
If $\ep|\gamma|=|\xs-x_{i_0}|>c\ep^\frac{1}{2}\ge r$, then $|\xs-x_{i}|>r$ for all $i$ and 
\beqs\label{vcopntstanti1}\begin{split} \frac{1}{\pi}\sum_{\stackrel{i\neq i_0}{|x_i-\xs|\le \rho}}\frac{\ep}{x_i-\xs}&=
\frac{1}{\pi}\sum_{r< |x_i-\xs|\le \rho}\frac{\ep}{x_i-\xs}\\&=
\I^{1,\rho}[v](\xs)+ \frac{1}{\pi}\frac{v(\xs+\rho)+v(\xs-\rho)-2v(\xs)}{\rho}+o_\ep(1).
\end{split} \eeqs
Therefore, we can assume 
\beq\label{Ogammabpund}\ep^\frac{1}{2}|\gamma|\leq C\eeq
in \eqref{apprIcorallest1} and \eqref{apprIcorallest2}.
\end{rem}



\begin{lem}\label{vapproxphisteps}
Assume that $v$ satisfies \eqref{vass} and let
$x_i$ be defined as in \eqref{xi}. Let $\phi$ be defined by \eqref{phi}. Let 
$1\le M<N\leq N_\ep$ and $ R\ge c\ep^\frac{1}{2}$, with $c>0$  given by Proposition \ref{apprIcorall}.  Then, 
for all  $x\in (x_{M}+R,x_N-R)$,
\beq \label{shortapprox}
\left|\sum_{i=M}^{N}\ep \phi\left(\frac{x-x_i}{\ep\delta},b_i\right)+v(x_M)
 -v(x)\right|\leq 
o_\ep(1) + \frac{C\ep^2\delta N_\ep}{R},\eeq
with $o_\ep(1)$ independent of $R$ and $x$.
\end{lem}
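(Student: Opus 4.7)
The plan is to decompose the phase-transition profile as $\phi(z,b) = b H(z) + \tilde\phi(z,b)$, handle the step-function piece exactly, and control the correction $\tilde\phi$ using its $1/z$ decay from Lemma \ref{phiinfinitylem} together with the fractional-Laplacian approximation of Proposition \ref{apprIcorall}.

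First I would treat the step-function piece. Letting $x_j$ be the largest particle in $\mathcal{P}$ with index in $[M,N]$ satisfying $x_j\leq x$, the telescoping identity \eqref{vxidef} gives
\begin{equation*}
\sum_{i=M}^{N}\ep b_i H(x-x_i) \;=\; \sum_{i=M}^{j}\ep b_i \;=\; v(x_j)-v(x_{M-1}).
\end{equation*}
The oscillation bound $|v(x)-v(x_j)|\leq 2\ep$ from \eqref{oscvxixi+1} together with $|v(x_M)-v(x_{M-1})|=\ep$ then give that this step-function part equals $v(x)-v(x_M)$ up to an $O(\ep)=o_\ep(1)$ error. So it remains to estimate the correction $\Sigma:=\sum_{i=M}^{N}\ep\,\tilde\phi((x-x_i)/(\ep\delta),b_i)$ by $o_\ep(1)+C\ep^2\delta N_\ep/R$.

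For $\Sigma$, I would use the expansion $\tilde\phi(z,b)=-b/(\alpha\pi z)+O(1/z^2)$ for $|z|\geq 1$, which follows from \eqref{phiinfinity} together with the relation $\hat\phi(z)=\phi(-z)-1$. Split $[M,N]$ into near indices $S_1=\{i : |x-x_i|<\ep\delta\}$ and far indices $S_2=\{i : |x-x_i|\geq \ep\delta\}$. By the separation \eqref{proplippart1} and the smallness of $\delta$, $S_1$ contains at most the closest particle $x_{i_0}$, whose contribution is $O(\ep)$ via the $L^\infty$ bound on $\tilde\phi$. On $S_2$ the quadratic tail $\sum_{S_2}\ep\cdot O(\ep^2\delta^2/(x-x_i)^2)$ is controlled by $O(\ep\delta^2)$ via \eqref{i/k^2sum}, hence is also $o_\ep(1)$.

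The main obstacle is the leading-order piece $-\frac{\ep^2\delta}{\alpha\pi}\sum_{i\in S_2,\,i\neq i_0}\frac{b_i}{x-x_i}$ (the isolated $i_0$ term, when $i_0\in S_2$, is trivially $O(\ep)$ since $|x-x_{i_0}|\geq \ep\delta$). To control it I would complete the partial sum,
\begin{equation*}
\sum_{\stackrel{i\in S_2}{i\neq i_0}}\frac{b_i}{x-x_i} \;=\; \sum_{i\neq i_0}\frac{b_i}{x-x_i} \;-\; \sum_{i<M}\frac{b_i}{x-x_i} \;-\; \sum_{i>N}\frac{b_i}{x-x_i}.
\end{equation*}
The full sum is estimated by Proposition \ref{apprIcorall}, applicable because $R\geq c\ep^{1/2}$ forces $x\in(x_1+c\ep^{1/2},x_{N_\ep}-c\ep^{1/2})$, yielding $\sum_{i\neq i_0}\ep b_i/(x_i-x)=\pi\I[v](x)+o_\ep(1)+O(\gamma)$; combined with the universal bound \eqref{boundfractionalpreliminaries} on $\I[v]$ and the admissible size $|\gamma|\leq C\ep^{-1/2}$ from Remark \ref{vcopntstanti1rem}, multiplication by $\ep\delta/(\alpha\pi)$ produces only an $o_\ep(1)$ contribution. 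For the two outside sums every $|x-x_i|\geq R$, so each is bounded in absolute value by $N_\ep/R$, yielding exactly the target $C\ep^2\delta N_\ep/R$ term after multiplication by $\ep^2\delta$. Collecting all contributions delivers the claimed inequality, with the $o_\ep(1)$ independent of $R$ and $x$.
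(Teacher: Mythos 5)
Your proof is correct and follows the same strategy as the paper's: decompose $\phi$ into its Heaviside part plus the decaying correction $\tilde\phi$, handle the step part through the telescoping identity \eqref{vxidef} and the oscillation bound \eqref{oscvxixi+1}, and control the $1/z$ and $1/z^2$ tails of $\tilde\phi$ via Proposition~\ref{apprIcorall} and \eqref{i/k^2sum}. The one organizational difference is in the treatment of the leading $1/z$ sum: you complete the partial sum over $[M,N]$ to the full sum over $[1,N_\ep]$, apply the global formula \eqref{apprIcorallest2} to the latter, and bound the two outer tails $i<M$ and $i>N$ crudely, whereas the paper splits the $[M,N]$ sum by distance from $x$, applies the local formula \eqref{apprIcorallest1} to the near part, and bounds the far-from-$x$ indices inside $[M,N]$ crudely; the two decompositions are equivalent and yield the same $o_\ep(1)+C\ep^2\delta N_\ep/R$ error.
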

\begin{proof}
Fix $x\in (x_{M}+R,x_N-R)$, and let $x_{i_0}$ be the closest point to $x$. Then, we have 
\beq \label{342}
x-x_i >0 \quad \text{for} \quad i\leq i_0-1, \quad x-x_i<0 \quad \text{for}  \quad i\geq i_0+1
\eeq
and by \eqref{oscvxixi+1}
\beq \label{vxuplow}
v(x_{i_0}) -2\ep \leq v(x) \leq v(x_{i_0}) + 2\ep.
\eeq
By splitting the sum and using \eqref{vxuplow}, we obtain
\beqs
    \sum_{i=M}^{N} \ep \phi\left(\frac{x-x_i}{\ep\delta},b_i\right) -v(x)
   \leq \sum_{\stackrel{i=M}{i\not= i_0} }^{N} \ep \phi\left(\frac{x-x_i}{\ep\delta},b_i\right) 
    + \ep \phi\left(\frac{x-x_{i_0}}{\ep\delta},b_{i_0}\right)-v(x_{i_0}) +2\ep.
\eeqs
Using the asymptotic estimate \eqref{phiinfinity}, \eqref{vxidef}, $\phi\le1$, and \eqref{342}, we have that 
\beqs \begin{split}
    \sum_{i=M}^{N} \ep & \phi\left(\frac{x-x_i}{\ep\delta},b_i\right) -v(x)\\
     &\leq  \sum_{\stackrel{i=M}{i\not= i_0} }^{N} \ep \left (H\left ( \frac{x-x_i}{\ep\delta},b_i\right) +\dfrac{\ep \delta}{\alpha \pi}\dfrac{b_i \ep}{x_i-x} + \dfrac{K_1 \ep^2 \delta^2}{(x_i-x)^2} \right ) - v(x_{i_0})+ 3\ep \\
    &= \ep( n^+_{M,i_0-1} - n^-_{M,i_0-1}) -
   v(x_{i_0}) + 3\ep + \dfrac{\ep \delta}{\alpha \pi} \sum_{\stackrel{i=M}{i\not= i_0} }^{N} \dfrac{b_i \ep}{x_i-x} + \sum_{\stackrel{i=M}{i\not= i_0} }^{N} \dfrac{K_1 \ep^3 \delta^2}{(x_i-x)^2}.
    \end{split}
\eeqs
Notice that by \eqref{stronzo}, 
\beqs
\ep n^+_{M,i_0-1} - \ep n^-_{M,i_0-1} - v(x_{i_0})  = -v(x_M)+b_M\ep - b_{i_0}\ep.
\eeqs
We conclude that 
\beq \label{sqt}
    \sum_{i=M}^{N} \ep \phi\left(\frac{x-x_i}{\ep\delta},b_i\right)+v(x_M) -v(x)
     \leq   \dfrac{\ep \delta}{\alpha \pi} \sum_{\stackrel{i=M}{i\neq i_0}}^N \dfrac{b_i \ep}{x_i-x} +C\ep,
\eeq
where we also used \eqref{i/k^2sum}. By decomposing the sum in the right-hand side of the above inequality as follows
\beq \label{splitt}
\sum_{\stackrel{i=M}{i\neq i_0}}^N \dfrac{b_i \ep}{x_i-x} = \sum_{\stackrel{i\not= i_0}{|x_i-x|\leq R}}  \dfrac{b_i \ep}{x_i-x} 
+ \sum_{\stackrel{i=M}{|x_i-x|> R}}^N  \dfrac{b_i \ep}{x_i-x},
\eeq
applying \eqref{apprIcorallest1}  and recalling  \eqref{Ogammabpund}, we obtain
\beq \label{leqrt}\begin{split}
 \dfrac{\ep \delta}{\alpha \pi}\left|\sum_{\stackrel{i\not= i_0}{|x_i-x|\leq R}}  \dfrac{b_i \ep}{x_i-x} \right|
 &\leq C\ep\delta \left(\I^{1,R}[v](x)+\gamma+\frac{v(\xs+R)+v(\xs-R)-2v(\xs)}{R}\right)\\&
\leq C\ep\delta(1+\gamma) \leq C\ep^\frac{1}{2}\delta=o_\ep(1).
\end{split}\eeq
For the second term in \eqref{splitt}, we have 
\beq \label{secndt}
 \dfrac{\ep \delta}{\alpha \pi} \left| \sum_{\stackrel{i=M}{|x_i-x|> R}}^N \dfrac{b_i \ep}{x_i-x}\right| \leq  \dfrac{\ep \delta}{\alpha \pi}  \sum_{\stackrel{i=M}{|x_i-x|> R}}^N  \dfrac{\ep}{R} 
 \leq  \dfrac{\ep \delta}{\alpha \pi}\frac{\ep N_\ep}{R}= \frac{C\ep^2\delta N_\ep}{R}.
 \eeq
Combining \eqref{sqt}, \eqref{splitt}, \eqref{leqrt} and \eqref{secndt}, we get
\beq 
\sum_{i=M}^{N} \ep \phi\left(\frac{x-x_i}{\ep\delta},b_i\right)+v(x_M) -v(x)
     \leq o_\ep(1) + \frac{C\ep^2\delta N_\ep}{R}.
\eeq
Similarly, one can show that 
\beq 
\sum_{i=M}^{N} \ep \phi\left(\frac{x-x_i}{\ep\delta},b_i\right)+v(x_M) -v(x)
     \geq o_\ep(1) - \frac{C\ep^2\delta N_\ep}{R}.
\eeq
This completes the proof.
\end{proof}


\begin{lem}\label{vapprowhatremains}
Under the assumptions of Lemma \ref{vapproxphisteps},    there exists $C>0$  independent of $\ep$  and $R$ such that 
for all  $x>x_N+R$, 
\beq\label{vapprowhatremainseq1}
\left|\sum_{i=M}^{N}\ep \phi\left(\frac{x-x_i}{\ep\delta},b_i\right)+v(x_M) -v(x_N)\right|\leq o_\ep(1) + \frac{C\ep^2\delta N_\ep}{R}.
\eeq
 Furthermore, 
for all  $x<x_M-R$, 
\beq\label{vapprowhatremainseq2}
\left|\sum_{i=M}^{N}\ep \phi\left(\frac{x-x_i}{\ep\delta},b_i\right) \right|\leq o_\ep(1) + \frac{C\ep^2\delta N_\ep}{R}.
\eeq
\end{lem}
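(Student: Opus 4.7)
The plan is to exploit the decay estimate \eqref{phiinfinity} together with the identity $\hat\phi(z) = \phi(-z) - 1$. A short calculation shows that, with the notation \eqref{phibi} and \eqref{hbi}, for every $b \in \{-1,1\}$ and $|z| \geq 1$,
\[
\left| \phi(z,b) - H(z,b) + \frac{b}{\alpha \pi z} \right| \leq \frac{K_1'}{z^2},
\]
for a constant $K_1'$ independent of $b$. This is the single input I need; everything else is summation and bookkeeping already justified earlier in the section.

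Setting $z_i := (x - x_i)/(\ep\delta)$, the hypothesis $|x - x_i| \geq R \geq c\ep^{1/2}$ guarantees that $|z_i| \geq R/(\ep\delta) \geq 1$ once $\ep$ is small, so the expansion above applies termwise. Multiplying by $\ep$, I obtain
\[
\ep\,\phi\!\left(\frac{x-x_i}{\ep\delta}, b_i\right) = \ep\, H\!\left(\frac{x-x_i}{\ep\delta}, b_i\right) - \frac{\ep^2\delta}{\alpha \pi} \frac{b_i}{x - x_i} + O\!\left(\frac{\ep^3\delta^2}{(x-x_i)^2}\right).
\]
Summing over $i = M, \ldots, N$, I will control the three pieces separately.

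For \eqref{vapprowhatremainseq1}, the condition $x > x_N + R$ forces $H((x-x_i)/(\ep\delta), b_i) = b_i$ for every $i$, so by the telescoping identity \eqref{stronzo},
\[
\sum_{i=M}^N \ep\, H\!\left(\frac{x-x_i}{\ep\delta}, b_i\right) = \sum_{i=M}^N \ep b_i = v(x_N) - v(x_M) + b_M\ep.
\]
The $O(\ep)$ term here is $o_\ep(1)$. The $1/z$ correction is bounded using $|x - x_i| \geq R$:
\[
\left| \frac{\ep^2 \delta}{\alpha \pi} \sum_{i=M}^N \frac{b_i}{x - x_i} \right| \leq \frac{\ep^2 \delta N_\ep}{\alpha \pi R},
\]
which produces the $C\ep^2 \delta N_\ep / R$ term in the statement. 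Finally, for the quadratic remainder I invoke \eqref{i/k^2sum}:
\[
\sum_{i=M}^N \frac{\ep^3 \delta^2}{(x - x_i)^2} \leq \ep \delta^2 \sum_{i=1}^{N_\ep} \frac{\ep^2}{(x_i - x)^2} \leq cL^2 \ep \delta^2 = o_\ep(1).
\]
Rearranging and moving $v(x_M) - v(x_N)$ to the left-hand side yields \eqref{vapprowhatremainseq1}.

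For \eqref{vapprowhatremainseq2}, the condition $x < x_M - R$ makes every Heaviside term vanish, so the leading sum drops out entirely. The bounds on the $1/z$ and $1/z^2$ contributions are identical to those above (they only used $|x - x_i| \geq R$ and \eqref{i/k^2sum}), giving the claimed estimate with no extra work. There is no serious obstacle here: the sole point requiring attention is the uniform validity of the expansion for both $b = \pm 1$, which is immediate from $\hat\phi(z) = \phi(-z) - 1$ and \eqref{phiinfinity}.
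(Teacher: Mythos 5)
Your proof is correct and follows essentially the same route as the paper's: you split each term via the decay estimate \eqref{phiinfinity} (after checking it holds uniformly for both orientations, which the paper uses implicitly), sum the Heaviside contributions with \eqref{stronzo}, bound the first-order tail by $\ep^2\delta N_\ep/R$ using $|x-x_i|\ge R$, and absorb the second-order tail with \eqref{i/k^2sum}. The only cosmetic difference is that you write a two-sided $O(\cdot)$ expansion while the paper proves upper and lower bounds separately; the content is the same.
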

\begin{proof}
First, suppose that $x>x_N+R$. Then, $x-x_i >R$ for all $i = M,\ldots,N$. The sign of $b_i(x-x_i)$ depends on the sign of $b_i$. Then by decomposing  the sum and applying \eqref{phiinfinity}, we have 
\beqs \begin{split}
    \sum_{i=M}^{N}\ep &\phi\left(\frac{x-x_i}{\ep\delta},b_i\right)-v(x_N)\\
    &\leq  \sum_{i=M}^N \ep \left (H \left ( \dfrac{x-x_i}{\ep \delta},b_i \right )+ \dfrac{\ep \delta}{\alpha \pi}\dfrac{b_i \ep}{x_i-x} + \dfrac{K_1 \ep^2 \delta^2}{(x_i-x)^2} \right) -v(x_N)\\
    &=\ep n^+_{M,N} - \ep n^-_{M,N}   -v(x_N)+ \dfrac{\ep \delta}{\alpha \pi} \sum_{\stackrel{i=M}{}}^N \dfrac{b_i \ep}{x_i-x} +K_1\ep \delta^2 \sum_{\stackrel{i=M}{}}^N \dfrac{\ep^2}{(x_i-x)^2}.
\end{split} \eeqs
Notice that by \eqref{stronzo}, 
\beqs
    \ep n^+_{M,N} - \ep n^-_{M,N}  -v(x_N)= -v(x_M)+b_M\ep.
\eeqs
Hence, we have 
\beqs \begin{split}
\sum_{i=M}^{N} \ep \phi\left(\frac{x-x_i}{\ep\delta},b_i\right)+ v(x_M)-v(x_N) &\leq \dfrac{\ep \delta}{\alpha \pi} \sum_{\stackrel{i=M}{}}^N \dfrac{b_i \ep}{x_i-x} +K_1\ep \delta^2 \sum_{\stackrel{i=M}{}}^N \dfrac{\ep^2}{(x_i-x)^2}+\ep \\
&\leq \dfrac{\ep \delta}{\alpha \pi}  \sum_{\stackrel{i=M}{}}^N \dfrac{\ep}{R} +C\ep\delta^2+\ep \\
&\le C\ep^2 N_\ep\frac{\delta}{R} +C\ep\delta^2 +\ep 
\\&=  o_\ep(1) + \frac{C\ep^2\delta N_\ep}{R},
\end{split}
\eeqs 
using \eqref{i/k^2sum}.
One can similarly show that 
\beqs
\sum_{i=M}^{N} \ep \phi\left(\frac{x-x_i}{\ep\delta},b_i\right)+ v(x_M)-v(x_N) \geq o_\ep(1) - \frac{C\ep^2\delta N_\ep}{R},
\eeqs
which completes the proof of \eqref{vapprowhatremainseq1}.

Next, suppose that $x<x_M-R$. Notice that $x-x_i<-R$ for all $i = M,\ldots,N$ and by \eqref{phiinfinity}, we obtain
\beqs \begin{split}
    \sum_{i=M}^{N}\ep \phi\left(\frac{x-x_i}{\ep\delta},b_i\right)
    &\leq  \sum_{i=M}^N \ep \left (H \left ( \dfrac{x-x_i}{\ep \delta},b_i \right )+ \dfrac{\ep \delta}{\alpha \pi}\dfrac{b_i \ep}{x_i-x} + \dfrac{K_1 \ep^2 \delta^2}{(x_i-x)^2} \right) \\
    &= \dfrac{\ep \delta}{\alpha \pi} \sum_{\stackrel{i=M}{}}^N \dfrac{b_i \ep}{x_i-x} +K_1\ep \delta^2 \sum_{\stackrel{i=M}{}}^N \dfrac{\ep^2}{(x_i-x)^2}.
\end{split} \eeqs
Thus, as before, we get
\beqs
\sum_{i=M}^{N} \ep \phi\left(\frac{x-x_i}{\ep\delta},b_i\right) \leq o_\ep(1) + \frac{C\ep^2\delta N_\ep}{R}.
\eeqs
Similarly, we can prove that 
\beqs
\sum_{i=M}^{N} \ep \phi\left(\frac{x-x_i}{\ep\delta},b_i\right) \geq o_\ep(1) - \frac{C\ep^2\delta N_\ep}{R},
\eeqs
which gives \eqref{vapprowhatremainseq2}.
\end{proof}

\begin{prop}\label{approxpropfinal}
Assume that $v$ satisfies \eqref{vass} and let
$x_i$ and $b_i$ be defined as in \eqref{xi} and  \eqref{bidef}. Let $\phi$ be defined by \eqref{phi} and 
let $\delta=\delta(\ep)$ be such that $\ep^2N_\ep\delta=o_\ep(1)$. Then, for all  $x\in\R$, 
\beq\label{approxpripofinaleqbis}
\left|\sum_{i=1}^{N_\ep}\ep \phi\left(\frac{x-x_i}{\ep\delta},b_i\right)-v(x)\right|\leq o_\ep(1),
\eeq
where $o_\ep(1)$ is independent of $x$.
\end{prop}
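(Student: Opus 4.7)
I would split \(\R\) into three main regions on which Lemmas~\ref{vapproxphisteps} and~\ref{vapprowhatremains} (applied with \(M=1\), \(N=N_\ep\)) deliver the estimate, together with two narrow transition layers handled by a direct argument.

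First, I would fix a scale \(R=R_\ep\) satisfying simultaneously \(R_\ep\geq c\ep^{1/2}\) (so both lemmas apply), \(R_\ep\to 0\), and \(\ep^2 N_\ep\delta/R_\ep\to 0\). The hypothesis \(\ep^2 N_\ep\delta=o_\ep(1)\) makes this possible, e.g.\ with \(R_\ep:=c\ep^{1/2}+\sqrt{\ep^2 N_\ep\delta}\). Then the error terms produced by the two lemmas, which are of the form \(o_\ep(1)+C\ep^2\delta N_\ep/R_\ep\), are \(o_\ep(1)\) uniformly in \(x\).

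Second, on the bulk \(x\in(x_1+R_\ep,\,x_{N_\ep}-R_\ep)\), Lemma~\ref{vapproxphisteps} combined with \(v(x_1)=b_1\ep=O(\ep)\) (from the remark following \eqref{stronzo}) yields \eqref{approxpripofinaleqbis}. For \(x>x_{N_\ep}+R_\ep\), Lemma~\ref{vapprowhatremains} eq.~\eqref{vapprowhatremainseq1} gives \(\sum\ep\phi\approx v(x_{N_\ep})-v(x_1)\); the construction of the \(x_i\) forces \(v\) to remain in a single set \(\Lambda_i\) on \((x_{N_\ep},+\infty)\), so that \(|v(x)-v(x_{N_\ep})|\leq C\ep\), and the claim follows. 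The tail \(x<x_1-R_\ep\) is symmetric: \eqref{vapprowhatremainseq2} gives \(|\sum\ep\phi|\leq o_\ep(1)\), and \(v\) is trapped in one \(\Lambda_i\) on \((-\infty,x_1)\) so that \(|v(x)|\leq|v(x_1)|+C\ep=O(\ep)\).

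The main obstacle is the two thin transition layers \(|x-x_1|\leq R_\ep\) and \(|x-x_{N_\ep}|\leq R_\ep\), which are covered by neither lemma. In the left layer I would use the Lipschitz continuity of \(v\) to obtain \(|v(x)-v(x_1)|\leq L R_\ep=o_\ep(1)\), and decompose the sum into a near part over indices with \(|x_i-x|\leq 2R_\ep\) and a far part. By \eqref{proplippart1} the near part contains at most \(O(R_\ep/\ep)\) terms, each bounded in modulus by \(\ep\), contributing \(O(R_\ep)=o_\ep(1)\). Since there is no \(x_i\) to the left of \(x_1\), every far particle satisfies \(x_i>x+2R_\ep\), so letting \(M\) be the first far index the sub-sum \(\sum_{i=M}^{N_\ep}\ep\phi\) meets the hypothesis \(x<x_M-R_\ep\) of Lemma~\ref{vapprowhatremains} eq.~\eqref{vapprowhatremainseq2}, and is therefore \(o_\ep(1)\). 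The right transition layer is treated symmetrically, with the roles of \eqref{vapprowhatremainseq1} and \eqref{vapprowhatremainseq2} exchanged. The delicate point is to ensure that the \(o_\ep(1)\) bound is genuinely uniform in \(x\) across all three regions and the two layers, which is guaranteed by the single choice of \(R_\ep\) above.
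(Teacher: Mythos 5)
Your proposal is correct and follows essentially the same strategy as the paper's proof: fix $R_\ep$ of order $\max\{c\ep^{1/2},\sqrt{\ep^2 N_\ep\delta}\}$, apply Lemma~\ref{vapproxphisteps} on the bulk and Lemma~\ref{vapprowhatremains} on the far tails, and handle the two $R_\ep$-wide transition layers around $x_1$ and $x_{N_\ep}$ by splitting the sum into a near part (bounded by $O(R_\ep)$ via particle counting from \eqref{proplippart1}) and a far part (controlled by the lemmas), with the $O(\ep)$ normalization $v(x_1)=b_1\ep$ absorbed into the error. The paper organizes the layer split by choosing an index $N$ with $x_N\le x_{N_\ep}-2R<x_{N+1}$ rather than by the near/far distance criterion you use, but these are equivalent and yield the same uniform $o_\ep(1)$ bound.
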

\begin{proof}
Estimate  \eqref{approxpripofinaleqbis} is a consequence of the following inequality
\beq\label{approxpripofinaleqclaim}
\left|\sum_{i=1}^{N_\ep}\ep \phi\left(\frac{x-x_i}{\ep\delta},b_i\right)-v(x)\right|\leq o_\ep(1) + \dfrac{C\ep^2 \delta N_\ep}{R},
\eeq
for some $C>0$ independent of $\ep$ and $R=R_\ep\geq\ep\sqrt{N_\ep\delta}$. 

Let us denote $a_\ep: =\ep^2N_\ep\delta=o_\ep(1)$. Let $R=R_\ep:=\max\{a_\ep^\frac{1}{2},c\ep^\frac{1}{2}\}=o_\ep(1)$, with  $c$  given in Proposition \ref{apprIcorall}. 
If $x\in (x_1+R, x_{N_\ep}-R)$,  then \eqref{approxpripofinaleqclaim} follows from Lemma \ref{vapproxphisteps} with $M=1$ and $N=N_\ep$.
Next, let us assume $x>x_{N_\ep}+R$. Then, by \eqref{vapprowhatremainseq1}, we have
\beqs\begin{split}
\left|\sum_{i=1}^{N_\ep}\ep \phi\left(\frac{x-x_i}{\ep\delta},b_i\right)-v(x)\right|\leq |v(x_{N_\ep})-v(x)|+o_\ep(1).
\end{split}\eeqs
In the interval $[x_{N_\ep},+\infty)$, the oscillation of $v$ is less or equal than $2\ep$, therefore
\beqs | v(x_{N_\ep})-v(x)|\leq 2\ep,\eeqs
from which  \eqref{approxpripofinaleqclaim}  for   $x>x_{N_\ep}+R$ follows.
By using \eqref{vapprowhatremainseq2}, one can similarly prove 
\eqref{approxpripofinaleqclaim} when  $x<x_1-R$.

Next, assume $x_{N_\ep}-R\le x\le x_{N_\ep}+R$. Define $N$ to be an index such that 
\beqs
x_{N}\leq x_{N_\ep}-2R<x_{N+1}\le x_{N_\ep},
\eeqs
Using that $v(x) = v(x_{N_\ep}) + O(R)= v(x_N)+O(R)$, we get
\beqs\begin{split}
&\left |\sum_{i=1}^{N_\ep}\ep \phi\left(\frac{x-x_i}{\ep\delta},b_i\right)-v(x)\right |\\
&\leq \left |
\sum_{i=1}^{N_\ep}\ep \phi\left(\frac{x-x_i}{\ep\delta},b_i\right)-v(x_{N_\ep})\right |+O(R)\\
&= \left | \sum_{\stackrel{i=1}{x_i\leq x_{N_\ep}-2R}}^{N_\ep}\ep \phi\left(\frac{x-x_i}{\ep\delta},b_i\right) + \sum_{\stackrel{i=1}{x_i> x_{N_\ep}-2R}}^{N_\ep}\ep \phi\left(\frac{x-x_i}{\ep\delta},b_i\right) - v(x_{N\ep})  \right |+O(R)\\
&= \left | \sum_{i=1}^{N}\ep \phi\left(\frac{x-x_i}{\ep\delta},b_i\right) + \sum_{i=N+1}^{N_\ep}\ep \phi\left(\frac{x-x_i}{\ep\delta},b_i\right) - v(x_{N\ep})  \right |+O(R)\\
& \leq \left | \sum_{i=1}^{N}\ep \phi\left(\frac{x-x_i}{\ep\delta},b_i\right)  - v(x_N) \right |+\left | \sum_{i=N+1}^{N_\ep}\ep \phi\left(\frac{x-x_i}{\ep\delta},b_i\right)  \right |+O(R).
\end{split}\eeqs
By \eqref{vapprowhatremainseq1} with $M=1$, we obtain
\beq \label{1_term}
\left | \sum_{i=1}^{N}\ep \phi\left(\frac{x-x_i}{\ep\delta},b_i\right)  - v(x_N) \right | \leq o_\ep(1) + \dfrac{C\ep^2 \delta N_\ep}{R}.
\eeq
Since $0<\phi< 1$, we have
\beq \label{2_term}
  \left | \sum_{i=N+1}^{N_\ep}\ep \phi\left(\frac{x-x_i}{\ep\delta},b_i\right)  \right | 
 \leq  \sum_{i=N+1}^{N_\ep}\ep  = \ep (n_{N+1,N_\ep}^+ + n_{N+1,N_\ep}^-) = O(R),
\eeq
where $n_{N+1,N_\ep}^+ + n_{N+1,N_\ep}^-$ is the total number of particles lying inside the interval 
$$[x_{N+1}, x_{N_\ep}] \subset [x_{N_\ep} -2R, x_{N_\ep}+R],$$
which by \eqref{proplippart1} can be estimated by $CR/\ep$ for some constant $C>0$.

By \eqref{1_term} and \eqref{2_term}, we have proven \eqref{approxpripofinaleqclaim} when $x_{N_\ep}-R\le x\le x_{N_\ep}+R$.
Similarly, one can prove   \eqref{approxpripofinaleqclaim} when $x_{1}-R\le x\le x_{1}+R$, which concludes the proof of the proposition.
\end{proof}

We conclude this section with the following result  that will be used in Section \ref{supersolutionssec}.
The proof is an easy adaptation of the proof of  Lemma 4.13 in \cite{patsan}. 
\begin{lem}\label{sumxialwaysfiniteprop}
Assume that $v$ satisfies \eqref{vass} and let
$x_i$ and $b_i$ be defined as in \eqref{xi} and  \eqref{bidef}. 
 Then, there exists $C>0$ such that for all $x\in\R$, 
\beqs\label{sumxialwaysfiniteeq}\left| \sum_{i\neq i_0}\frac{\ep b_i}{x_i-x}\right|\leq C.
\eeqs
\end{lem}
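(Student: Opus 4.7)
The plan is to derive the bound as a direct consequence of the approximation formula in Proposition~\ref{apprIcorall}, combined with the $L^\infty$ estimate \eqref{boundfractionalpreliminaries} on $\I[v]$ and the uniform control of the error term coming from Remark~\ref{vcopntstanti1rem}.

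First, I would fix $x \in \R$ and let $x_{i_0}$ be the closest particle to $x$, writing $x = x_{i_0} + \epsilon\gamma$. For $x$ in the interior region $(x_1 + c\epsilon^{1/2},\, x_{N_\epsilon} - c\epsilon^{1/2})$, with $c$ the constant from Proposition~\ref{apprIcorall}, estimate \eqref{apprIcorallest2} yields
\[
\sum_{i \neq i_0} \frac{\epsilon\, b_i}{x_i - x} = \pi\, \I[v](x) + \pi\, o_\epsilon(1) + \pi\, O(\gamma).
\]
The first term is bounded by $4 \|v\|_{C^{1,1}(\R)}$ uniformly in $x$ by \eqref{boundfractionalpreliminaries}. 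For the $O(\gamma)$ term, Remark~\ref{vcopntstanti1rem} applies: if $|x - x_{i_0}| > c\epsilon^{1/2}$, the short-range contribution which produced the $O(\gamma)$ error is empty and the term is absent; otherwise we have $\epsilon^{1/2}|\gamma| \leq C$, and so the error remains controlled.

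Second, for $x$ within $c\epsilon^{1/2}$ of the boundary points $x_1$ or $x_{N_\epsilon}$, or outside the interval $[x_1, x_{N_\epsilon}]$, I would handle the sum by direct estimation. Here only finitely many particles lie near $x$, and using the minimum particle spacing $x_{i+1} - x_i \geq \epsilon/L$ from Lemma~\ref{lemdistxi}, the bound $N_\epsilon \leq C K_\epsilon/\epsilon$, and the finiteness of $\|v\|_\infty$, the partial sum over the bulk of particles (those with $|x_i - x| \geq 1$, say) is controlled by $\sum_i \epsilon \leq \epsilon N_\epsilon \leq CK_\epsilon$. The remaining near-particle terms are bounded by decomposing as in the interior case and using \eqref{i/k^2sum}; for $|x| \to \infty$ the sum tends to $0$.

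The main technical obstacle is the $O(\gamma)$ term in Proposition~\ref{apprIcorall}, which a priori could blow up as $\epsilon \to 0$, but Remark~\ref{vcopntstanti1rem} ensures it is uniformly bounded in $x$ for each fixed $\epsilon$. The argument thus parallels that of Lemma~4.13 in \cite{patsan} for the monotone case; the only new ingredient needed here is that the approximation results of Section~4, and in particular \eqref{apprIcorallest2}, have been extended to non-monotone $v$, so the same structural estimates carry over directly to the signed sum $\sum \epsilon b_i/(x_i - x)$.
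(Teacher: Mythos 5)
Your proposal does not close the lemma, because the bound it produces is not uniform in $\ep$, while the lemma must hold with a constant $C$ independent of $\ep$. The key weakness is the $O(\gamma)$ error in Proposition~\ref{apprIcorall}. You correctly identify that Remark~\ref{vcopntstanti1rem} lets one assume $\ep^{1/2}|\gamma|\leq C$, but this only gives $|\gamma|\lesssim\ep^{-1/2}$, so the error term is a priori of order $\ep^{-1/2}$ and blows up as $\ep\to 0$. You even acknowledge this, concluding that the sum is ``uniformly bounded in $x$ for each fixed $\ep$.'' That is not what the lemma asserts, and it is not enough: in Lemma~\ref{hepsupersolution} this $C$ feeds into the choice of $L=C/\sigma^{1/2}$ and hence into the time interval $(0,\sigma/(2c_0 L)]$ on which $H^\ep$ is a supersolution, so it must be independent of $\ep$ for the comparison-principle argument and the subsequent $\sigma\to 0$ limit to go through.

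There is a second, milder, issue in your treatment of the far-field/boundary region: bounding the bulk contribution by $\ep N_\ep\leq C K_\ep$ is again not uniform in $\ep$, since $K_\ep$ can diverge. What saves the estimate there (and indeed for all $x$) is cancellation: since $\ep b_i = v(x_i)-v(x_{i-1})$, an Abel/summation-by-parts rearrangement converts $\sum_i \ep b_i/(x_i-x)$ into a telescoping form weighted by $v(x_i)$, and the uniform bound then comes from $\|v\|_\infty$ and the Lipschitz spacing \eqref{proplippart1}, \eqref{i/k^2sum}. This is the type of direct estimate in Lemma~4.13 of \cite{patsan} to which the present proof defers, and it avoids Proposition~\ref{apprIcorall} entirely. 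In fact relying on the approximation formula is partly circular here: that formula is designed to compare the discrete sum to $\I[v]$ with an error that is allowed to degrade near $x_{i_0}$; the present lemma is precisely the statement that the sum itself never degrades, which is information the approximation formula does not give back.
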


\begin{rem}\label{importantrem}
Proposition \ref{approxpropfinal} and Lemma \ref{sumxialwaysfiniteprop} hold true if in \eqref{vass}  the assumption $v(-\infty)=0$ is replaced by $v(-\infty)=m$, for any $m\in\R$. 
In this case, formula \eqref{approxpripofinaleq} is replaced by  
\beqs\label{approxpripofinaleq}
\left|\sum_{i=1}^{N_\ep}\ep \phi\left(\frac{x-x_i}{\ep\delta},b_i\right)-v(x)+\ep M_\ep\right|\leq o_\ep(1),
\eeqs
with $M_\ep= \lceil m/\ep\rceil$.
\end{rem}

\section{Supersolutions  of \eqref{uepeq}}\label{supersolutionssec}
In this section, we construct global is space and local in time supersolutions of the equation \eqref{uepeq}.
\begin{lem} \label{hepsupersolution}
Let $v$ be a $C^{1,1}$ function which  is  monotonic non-increasing in $(-\infty,x_0)$, non-decreasing in $(x_0,\infty)$, constant in $(x_0-\sigma,x_0+\sigma)$, for some $x_0\in\R$ and $0<\sigma<1$.   Let $x_i(t)$ be the solution of the ODE system  \eqref{levelsetode} with $L>0$ and  $x_i^0$  and $b_i$ defined as in \eqref{xi} and  \eqref{bidef}.
Then, there exists  $C>0$  depending on $v$ and $W$ such that  if $L=C/\sigma^\frac12$, 
 the function 
\beq \label{HEP}
H^\ep(t,x) = \sum_{i = 1}^{N_\ep}\ep \phi\left (\dfrac{x-x_i(t)}{\ep \delta}, b_i\right ) + \sum_{i = 1}^{N_\ep} \ep \delta \psi\left (\dfrac{x-x_i(t)}{\ep \delta}, b_i\right ) +\dfrac{\ep \delta L}{\alpha}
\eeq
is a supersolution of \eqref{uepeq} in $ (0,\sigma/(2c_0L)]\times \R$, where $\phi$ is the solution of \eqref{phi} and  $\psi$ is the solution of \eqref{psi}.
\end{lem}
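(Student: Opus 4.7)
The plan is to plug $H^\ep$ into the equation and bound from below the residual
\begin{equation*}
E^\ep := \delta\partial_t H^\ep - \I[H^\ep] + \frac{1}{\delta}W'\!\left(\frac{H^\ep}{\ep}\right).
\end{equation*}
First I would record a non-collision property for \eqref{levelsetode}: since $v$ is constant on $(x_0-\sigma, x_0+\sigma)$ and monotonic outside, all particles with $b_i=-1$ lie in $(-\infty, x_0-\sigma]$ and all particles with $b_i=+1$ in $[x_0+\sigma, +\infty)$, so every opposite-oriented pair has initial distance at least $2\sigma$. Under $\dot x_i = -c_0 b_i L$ with $L>0$, opposite-oriented particles approach at relative speed $2c_0 L$, so for $t \le \sigma/(2c_0 L)$ they remain at distance at least $\sigma$, while same-oriented particles move in parallel and preserve the initial spacing $\ge \ep/L$ of \eqref{proplippart1}.

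The main computation is of Gonzalez--Monneau type. Setting $z_i(t,x):=(x-x_i(t))/(\ep\delta)$, using $\dot x_i = -c_0 b_i L$, the scaling identity $\I[\psi((\cdot-x_i)/(\ep\delta),b_i)](x) = (\ep\delta)^{-1}\I[\psi(\cdot,b_i)](z_i)$, and the equations \eqref{phi}, \eqref{psi} for $\phi(\cdot,b_i)$ and $\psi(\cdot,b_i)$ (which hold in both orientations thanks to the periodicity of $W$), the $c_0 L b_i \partial_z\phi(z_i,b_i)$ contributions cancel between $\delta\partial_t H^\ep$ and $\I[H^\ep]$, leaving
\begin{equation*}
\delta\partial_t H^\ep - \I[H^\ep] = \delta c_0 L\sum_i b_i\partial_z\psi(z_i,b_i) - \frac{1}{\delta}\sum_i W'(\phi(z_i,b_i)) - \sum_i W''(\phi(z_i,b_i))\psi(z_i,b_i) - \frac{L}{\alpha}\sum_i [W''(\phi(z_i,b_i))-\alpha].
\end{equation*}
Let $i_0$ denote the index of the particle closest to $x$. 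Since $H(z_i,b_i)\in\Z$ and $W'$ is $1$-periodic, $W'(H^\ep/\ep) = W'(\phi(z_{i_0},b_{i_0}) + \eta)$, where
\begin{equation*}
\eta := \sum_{i\neq i_0}[\phi(z_i,b_i)-H(z_i,b_i)] + \delta\sum_i \psi(z_i,b_i) + \frac{\delta L}{\alpha}.
\end{equation*}
By \eqref{phiinfinity}, \eqref{psiinfinity} and Lemma \ref{sumxialwaysfiniteprop}, one has $\eta = O(\delta)$ uniformly in $x$. Taylor expanding $W'$ to second order in $\eta$ around $\phi(z_{i_0},b_{i_0})$, and using $W'(\phi(z_i,b_i))=\alpha[\phi(z_i,b_i)-H(z_i,b_i)]+O(1/z_i^2)$ for $i\neq i_0$ (from $W'(0)=0$, $W''(0)=\alpha$, periodicity, and \eqref{phiinfinity}), the leading $\alpha/\delta$ pieces match exactly, and the $i_0$-contribution $-\frac{L}{\alpha}[W''(\phi(z_{i_0},b_{i_0}))-\alpha]$ cancels against $\frac{1}{\delta}[W''(\phi(z_{i_0},b_{i_0}))-\alpha]\cdot\frac{\delta L}{\alpha}$ coming from the Taylor expansion. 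What remains is
\begin{equation*}
E^\ep = L + \mathcal{R},
\end{equation*}
where $\mathcal{R}$ gathers the $\delta c_0 L\sum_i b_i\partial_z\psi(z_i,b_i)$ correction, the non-$i_0$ sums $-\sum_{i\neq i_0}[W''(\phi(z_i,b_i))-\alpha](\psi(z_i,b_i)+L/\alpha)$, the piece $\frac{1}{\delta}[W''(\phi(z_{i_0},b_{i_0}))-\alpha](\eta-\delta L/\alpha)$, and the quadratic Taylor remainder $O(\eta^2/\delta)=O(\delta)$.

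The hardest step is to show $|\mathcal{R}| \le L/2$ for the stated choice of $L$. The long-range pieces are controlled by the decay estimates \eqref{phiinfinity}, \eqref{psiinfinity}, \eqref{psi'infinity}, by Lemma \ref{sumxialwaysfiniteprop}, and by the summability \eqref{i/k^2sum} of Lemma \ref{lemdistxi} applied with $\|v_x\|_\infty\le L$; they contribute bounded constants depending only on $v$ and $W$. The delicate short-range contributions come from the closest opposite-oriented particle, whose distance is only guaranteed to be $\ge\sigma$ by Step~1: tracking their $\sigma$-dependence in each summand yields the estimate $|\mathcal{R}|\le C_0\sigma^{-1/2}$ with $C_0=C_0(v,W)$. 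Choosing $C=2C_0$ in $L=C\sigma^{-1/2}$ then gives $E^\ep \ge L/2 > 0$, so $H^\ep$ is a classical (hence viscosity) supersolution on $(0,\sigma/(2c_0L)]\times\R$. The principal obstacle is precisely this $\sigma$-bookkeeping in the transition region $(x_0-\sigma,x_0+\sigma)$: many same-oriented particles at mutual distance $\sim\ep/L$ must coexist with opposite-oriented particles as close as $\sigma$, and the forcing scale $L\sim\sigma^{-1/2}$ baked into the ansatz must exactly beat an interaction error of the same order.
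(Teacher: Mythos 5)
Your overall architecture matches the paper's: compute the residual, use the scaling identity together with the equations \eqref{phi} and \eqref{psi} to cancel the $\phi'$ terms, Taylor expand $W'$ around $\tilde\phi(z_{i_0},b_{i_0})$ and around $0$, and reduce matters to showing that the remaining interaction sum is $O(\sigma^{-1/2})$. The non-collision observation, the algebraic cancellations, and the identification of the decisive term are all correct in outline.

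The gap is in the only genuinely quantitative step — the one that makes $L=C\sigma^{-1/2}$ (rather than $C\sigma^{-1}$) work — and your heuristic for it is misleading. After the cancellations, the key object to bound is $\bigl|\sum_{i\neq i_0}\frac{\ep b_i}{x_i(t)-x}\bigr|$. Lemma \ref{sumxialwaysfiniteprop} does \emph{not} apply to this directly, because at time $t>0$ the $x_i(t)$ are no longer the $\ep$-level set of any fixed $C^{1,1}$ function: all positive particles have shifted by $-c_0Lt$ and all negative ones by $+c_0Lt$, which destroys the signed cancellation that lemma relies on. One must first split into orientations and recentre: e.g.\ with $b_{i_0}=1$ and $y(t)=x+c_0Lt$, the positive part equals the full (signed) sum at $y(t)$ at time $0$ — to which Lemma \ref{sumxialwaysfiniteprop} applies — minus the sum over the negative particles, which has \emph{no} internal cancellation. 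That leftover one-sided sum is precisely where the $\sigma$-dependence lives, and the difficulty is not "the closest opposite-oriented particle" (a single term contributes only $\ep/\sigma=o_\ep(1)$), but the cumulative effect of the whole family of opposite-oriented particles at distances in $[\sigma,O(1)]$: the naive bound (each term $\le\ep/\sigma$, and $N_\ep\ep\le C$ terms) gives only $C/\sigma$, which is too weak. To get $C\sigma^{-1/2}$ you must split that one-sided sum at an intermediate scale $\rho\in(\sigma,1)$: particles at distance $\ge\rho$ contribute $\le C/\rho$ (via $N_\ep\ep\le C$), particles at distance in $(\sigma,\rho)$ number $\le C\rho/\ep$ by \eqref{proplippart1} and each contributes $\le\ep/\sigma$, so they give $\le C\rho/\sigma$; balancing $C/\rho+C\rho/\sigma$ at $\rho=\sigma^{1/2}$ yields the required rate. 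This optimisation — and the preliminary recentring that lets you invoke Lemma \ref{sumxialwaysfiniteprop} at all — is missing from your write-up, and without it the asserted bound $|\mathcal{R}|\le C_0\sigma^{-1/2}$ is unjustified.
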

\begin{proof}
To simplify the notation, we write $N = N_\ep$. Since $v$ is monotonic in $(-\infty,x_0)$ and  $(x_0,\infty)$,  the number of particles
in each of the intervals  is bounded by 
$(\sup v-\inf v)/\ep$ so that $\ep N\leq C$. 
Notice that the points $x_i^0$ such that $x_i^0<x_0$ are associated to $b_i=-1$ while points $x_i^0>x_0$ to $b_i=1$. Moreover,  since $v$ is constant 
in $(x_0-\sigma,x_0+\sigma)$ we have that 
\beq\label{nocolliinsigmac}x_{i+1}^0 - x_i^0 \geq2\sigma,\quad \text{if }b_i=-1,\,b_{i+1}=1.\eeq
Since $L>0$ the particles on the left of $x_0$ move to the right, while particles on the right of $x_0$ move to the left. However, since the particles with opposite sign are far enough, there is no collision for small times. More precisely, by the ODE system  \eqref{levelsetode}  and \eqref{nocolliinsigmac}, we see that for any $t \in [0,\sigma/(2c_0L)]$.
\beq \label{noncollision}
x_{i+1}(t) - x_i(t) \ge\sigma \quad \text{if }b_i=-1,\,b_{i+1}=1.
\eeq 
Particles with same orientation move in parallel  so they never collide.

Now, for a fixed $(t,x) \in [0,\sigma/(2c_0L)]\times \R,$ define 
$$\Lambda (t,x) = \delta \partial_t H^\ep (t,x) - \I[H^\ep(t,\cdot)](x) + \dfrac{1}{\delta}W'\left (\dfrac{H^\ep(t,x)}{\ep} \right ).$$
Define also $z_i(t) = \dfrac{x-x_i(t)}{\ep\delta}$. Let $i_0$ be the index such that $x_{i_0}(t)$ is the closest point to $x$. By direct computation, we obtain 
\beqs
\begin{split}
    \Lambda(t,x) 
&=- \sum_{i = 1}^N b_i\dot{x}_i \phi'(z_i,b_i) - \sum_{i = 1}^N\delta b_i\dot{x}_i \psi'(z_i,b_i)- \dfrac{1}{\delta}\sum_{i=1}^N \I[\phi](z_i,b_i)-\sum_{i=1}^N \I[\psi](z_i,b_i)\\
&+ \dfrac{1}{\delta} W' \left ( \sum_{i=1}^N [\tilde{\phi}(z_i,b_i) + \delta \psi(z_i,b_i) ] + \dfrac{\delta L}{\alpha} \right ),
\end{split}
\eeqs
where $\tilde{\phi}(z_i,b_i) := \phi(z_i,b_i) - H(z_i,b_i)$. Notice that by the periodicity of $W$, $W'(\phi(z_i,b_i))=W'(\tilde\phi(z_i,b_i))$. Using $\dot{x}_i = -c_0b_iL$ and $ \I[\phi](z_i,b_i) = W'(\tilde\phi(z_i,b_i))$, we have 
\beqs
\begin{split}
    \Lambda(t,x) 
&= \sum_{i = 1}^N c_0L \phi'(z_i,b_i) + \sum_{i = 1}^N\delta c_0L \psi'(z_i,b_i)- \dfrac{1}{\delta}\sum_{i=1}^N W'(\tilde\phi(z_i,b_i))-\sum_{i=1}^N  \I[\psi](z_i,b_i)\\
&+ \dfrac{1}{\delta} W' \left ( \sum_{i=1}^N [\tilde{\phi}(z_i,b_i) +  \delta \psi(z_i,b_i) ] + \dfrac{\delta L}{\alpha} \right )\\
&= c_0L\phi'(z_{i_0},b_{i_0}) - \dfrac{1}{\delta} W'(\tilde\phi(z_{i_0},b_{i_0})) -  \I[\psi](z_{i_0},b_{i_0}) - \dfrac{1}{\delta}\sum_{\stackrel{i=1}{i\not=i_0}}^NW'(\tilde\phi(z_i,b_i))\\
&+\dfrac{1}{\delta}W'\left(\tilde{\phi}(z_{i_0},b_{i_0}) + \sum_{\stackrel{i=1}{i\not=i_0}}^N\tilde{\phi}(z_i,b_i) + \sum_{i=1}^N \delta \psi(z_i,b_i) + \dfrac{\delta L}{\alpha} \right ) + E_0,
\end{split}
\eeqs
where 
\beqs
E_0 := \sum_{\stackrel{i=1}{i\not=i_0}}^N c_0L\phi'(z_i,b_i) + \sum_{i=1}^N \delta c_0L \psi'(z_i,b_i) - \sum_{\stackrel{i=1}{i\not=i_0}}^N  \I[\psi](z_i,b_i).
\eeqs
By Taylor expansion of $W'$ around $\tilde\phi(z_{i_0},b_{i_0})$, we can write
\beqs
\begin{split}
    \Lambda(t,x)
&= c_0L\phi'(z_{i_0},b_{i_0}) - \I[\psi](z_{i_0},b_{i_0}) - \dfrac{1}{\delta}\sum_{\stackrel{i=1}{i\not=i_0}}^NW'(\tilde\phi(z_i,b_i))\\
&+ \dfrac{1}{\delta}W''(\tilde{\phi}(z_{i_0},b_{i_0}))\left (\sum_{\stackrel{i=1}{i\not=i_0}}^N\tilde{\phi}(z_i,b_i) + \sum_{i=1}^N \delta \psi(z_i,b_i) + \dfrac{\delta L}{\alpha} \right ) + E_0 + E_1,
\end{split}
\eeqs
where 
\beqs
E_1 := O\left (\sum_{\stackrel{i=1}{i\not=i_0}}^N\tilde{\phi}(z_i,b_i) + \sum_{i=1}^N \delta \psi(z_i,b_i) + \dfrac{\delta L}{\alpha} \right )^2.
\eeqs
Now, by Taylor expansion of $W'$ around $0$ and using that $\alpha=W''(0)$,  we obtain 
\beqs
\begin{split}
    \Lambda(t,x)
&= c_0L\phi'(z_{i_0},b_{i_0}) -  \I[\psi](z_{i_0},b_{i_0}) + (W''(\tilde\phi(z_{i_0},b_{i_0}))-W''(0))\sum_{\stackrel{i=1}{i\not=i_0}}^N \dfrac{\tilde{\phi}(z_i,b_i)}{\delta} + L \\
&+ (W''(\tilde\phi(z_{i_0},b_{i_0}))-W''(0))\dfrac{L}{\alpha} + W''(\tilde\phi(z_{i_0},b_{i_0})) \sum_{i=1}^N  \psi(z_i,b_i) + E_0 + E_1 + E_2\\
&= c_0L\phi'(z_{i_0},b_{i_0}) -  \I[\psi](z_{i_0},b_{i_0}) + (W''(\tilde\phi(z_{i_0},b_{i_0}))-W''(0))\dfrac{L}{\alpha}  \\
&+ W''(\tilde\phi(z_{i_0},b_{i_0})) \psi(z_{i_0},b_{i_0})+W''(\tilde\phi(z_{i_0},b_{i_0})) \sum_{\stackrel{i=1}{i\not=i_0}}^N  \psi(z_i,b_i)\\
&+ (W''(\tilde\phi(z_{i_0},b_{i_0}))-W''(0))\sum_{\stackrel{i=1}{i\not=i_0}}^N \dfrac{\tilde{\phi}(z_i,b_i)}{\delta} + L + E_0 + E_1 + E_2,\\
\end{split}
\eeqs
where 
\beqs
E_2 :=  \sum_{\stackrel{i=1}{i\not=i_0}}^N O(\tilde{\phi}(z_i,b_i))^2.
\eeqs
By using equation \eqref{psi}, we obtain
\beq \label{lastcal}
\begin{split}
    \Lambda(t,x)
&= (W''(\tilde\phi(z_{i_0},b_{i_0}))-W''(0))\sum_{\stackrel{i=1}{i\not=i_0}}^N \dfrac{\tilde{\phi}(z_i,b_i)}{\delta} + L + E_0 + E_1 + E_2 + E_3,\\
\end{split}
\eeq
where 
\beqs
E_3 := W''(\tilde\phi(z_{i_0},b_{i_0})) \sum_{\stackrel{i=1}{i\not=i_0}}^N  \psi(z_i,b_i).
\eeqs
Similarly to Lemma 5.3 in \cite{patsan},  one can show using the estimates of Lemma \ref{phiinfinitylem} and Lemma \ref{psiinfinitylem} that
\beq\label{errorssubsollem}E_0 + E_1 + E_2 + E_3 = o_\ep(1).\eeq
Now, by \eqref{phiinfinity} and \eqref{i/k^2sum}, we get
\beq\label{sumphilemmsupe}
\left |\sum_{\stackrel{i=1}{i\not=i_0}}^N \dfrac{\tilde{\phi}(z_i,b_i)}{\delta} \right | \leq \dfrac{1}{\delta\alpha \pi} \left | \sum_{\stackrel{i=1}{i\not=i_0}}^N \dfrac{b_i \ep \delta}{x-x_i(t)} \right | + \dfrac{K_1}{\delta} \left | \sum_{\stackrel{i=1}{i\not=i_0}}^N \dfrac{\ep^2 \delta^2}{(x-x_i(t))^2} \right | \leq \dfrac{1}{\alpha \pi} \left | \sum_{\stackrel{i=1}{i\not=i_0}}^N \dfrac{b_i \ep 
}{x-x_i(t)} \right | + C\delta,
\eeq
for some constant $C>0$. Recall that $x_i(t)=x_{i}^0-c_0b_iLt$ and that 
$x_{i_0}(t)$ is the closest point to $x$ among the particles $x_i(t)$. 
Going back at time 0, it is easy to see that $x_{i_0}^0$ is the closest point to $x+c_0b_{i_0}Lt$. 
We write 
\beq\label{splitposnesublemma5}
\sum_{\stackrel{i=1}{i\not=i_0}}^N \dfrac{b_i \ep 
}{x-x_i(t)}=\sum_{\stackrel{i=1}{i\not=i_0, x_i^0>x_0}}^N \dfrac{ \ep 
}{(x+c_0Lt)-x_i^0}-\sum_{\stackrel{i=1}{i\not=i_0, x_i^0<x_0}}^N \dfrac{ \ep 
}{(x-c_0Lt)-x_i^0}.
\eeq
Assume  $x\geq x_0$ so that $b_{i_0}=1$. The case $x<x_0$ can be treated similarly. Denote $y(t):=x+c_0Lt$. Then, by 
  Lemma \ref{sumxialwaysfiniteprop}  we have that 
 \beqs  \left|\sum_{\stackrel{i=1}{i\not=i_0, x_i^0>x_0}}^N \dfrac{ \ep 
}{y(t)-x_i^0}\right|=\left|\sum_{\stackrel{i=1}{i\not=i_0 }}^N \dfrac{ \ep b_i
}{y(t)-x_i^0}- \sum_{\stackrel{i=1}{x_i^0<x_0}}^N \dfrac{ \ep b_i
}{y(t)-x_i^0}\right|\leq C+\left| \sum_{\stackrel{i=1}{x_i^0<x_0}}^N \dfrac{ \ep 
}{y(t)-x_i^0}\right|.
\eeqs
Notice that by \eqref{nocolliinsigmac} and  \eqref{noncollision},  if $x_i^0<x_0$ then $x_i^0<y(t)-\sigma$. 
Now, fix $\rho>\sigma$. By \eqref{proplippart1} the number of particles in $(y(t)-\rho,y(t)-\sigma)$ is bounded by $C\rho/\ep$. This, together with $N\ep\le C$ yields  
\beqs 
\left| \sum_{\stackrel{i=1}{x_i^0<x_0}}^N \dfrac{ \ep 
}{y(t)-x_i^0}\right|\leq \left| \sum_{\stackrel{i=1}{x_i^0\leq y(t)-\rho}}^N \dfrac{ \ep 
}{y(t)-x_i^0}\right|+\left| \sum_{\stackrel{i=1}{ y(t)-\rho<x_i^0<y(t)-\sigma}}^N \dfrac{ \ep 
}{y(t)-x_i^0}\right|\leq \frac{C}{\rho}+\frac{C\rho}{\sigma}.
\eeqs 
Choosing $\rho=\sigma^\frac12$ the two last estimates give
\beq\label{finalestilemmasupe1}
\left|\sum_{\stackrel{i=1}{i\not=i_0, x_i^0>x_0}}^N \dfrac{ \ep 
}{(x+c_0Lt)-x_i^0}\right|\leq \frac{C}{\sigma^\frac12}.
\eeq
If $x_i^0<x_0$, then  by \eqref{nocolliinsigmac} and  \eqref{noncollision},  $x_i^0<(x-c_0Lt)-\sigma/2$ for any $t\in [0,\sigma/(2c_0L)]$, so that similar computations as above yield
  \beq\label{finalestilemmasupe2}\left|\sum_{\stackrel{i=1}{i\not=i_0, x_i^0<x_0}}^N \dfrac{ \ep 
}{(x-c_0Lt)-x_i^0}\right|\leq  \frac{C}{\sigma^\frac12}.
\eeq
 Combining \eqref{lastcal}, \eqref{errorssubsollem},  \eqref{sumphilemmsupe}, \eqref{splitposnesublemma5}, \eqref{finalestilemmasupe1} and \eqref{finalestilemmasupe2}, we finally obtain

$$\Lambda(t,x) \geq -\frac{C}{\sigma^\frac12} + L \geq 0,$$
which implies that $H^\ep$ is a supersolution of \eqref{uepeq} by choosing $L=C/\sigma^\frac12$ with $C>0$ sufficiently large. 
\end{proof}


\section{Proof of Theorem \ref{mainthm}} \label{convergence}
In this section, we prove our main Theorem \ref{mainthm}. We  first show that the functions $u^\ep$ are bounded uniformly in $\ep$. Since $W'(z)=0$ for any $z\in\Z$, integers are stationary solutions to \eqref{uepeq}. Let $\lambda_1, \lambda_2\in \Z$ be such that $\lambda_1\leq \inf_\R u_0\leq\sup_\R u_0\leq \lambda_2$. Then by the comparison principle
we have that for any $\ep>0$
$$\lambda_1\leq u^\ep(t,x)\leq \lambda_2\quad\text{for all }(t,x)\in [0,+\infty)\times\R.$$
In particular, 
$u^+:=\limsup^*_{\ep\rightarrow0}u^\epsilon$ is everywhere finite.
We will prove that 
$u^+$ is a  viscosity  subsolution of \eqref{ubareq}. Similarly, we can prove that
$u^-:={\liminf_*}_{\ep\rightarrow0}u^\epsilon$ is a supersolution
of \eqref{ubareq}. We will then show that 
\beq\label{initialconditionlimit} u^+(0,x)\leq u_0(x)\leq u^-(0,x),\eeq
with $u_0$ the initial condition in  \eqref{ubareq}. The proof of \eqref{initialconditionlimit} is postponed to Section \ref{Initialconditionsection}. Then, if   $\us$ is the viscosity solution of \eqref{ubareq}, by the comparison principle, 
\beq\label{comparisonu+u-}u^+\leq \us\le u^-.\eeq
Since the reverse inequality $u^-\leq u^+$ always holds true, we
conclude that the two functions coincide with $\us$. This implies that $u^\ep\to \us$ as $\ep\to 0$, uniformly on compact sets on $[0,+\infty)\times \R$, see \cite[Remark 6.4]{usersguide}. 

\medskip 

Let us start by proving that $u^+$ is a  viscosity   subsolution of \eqref{ubareq}. Let $\eta$ be a smooth bounded function such that
\beq\label{inequalitycomparison}u^+(t,x)-\eta(t,x)<u^+(t_0,x_0)-\eta(t_0,x_0)=0\quad\text{for all } (t,x)\neq(t_0,x_0).\eeq

We separate the proof into two cases.

\subsection{Case 1: Test function $\eta$ with $\partial_x \eta (t_0,x_0) = 0$}\label{mainthmsubgrad0} \hfill\\
In this case, we want to show that

 \beq\label{mainresultcogl}\partial_t\eta (t_0,x_0) \leq 0.\eeq
Without loss of generality, we may  assume that  $\eta$ 
has the form 
\beq\label{etadefgrad0}\eta(t,x) = h(x) + g(t)\eeq with $g$ any smooth function and $h$ satisfying 
\beq\label{parabola}
\begin{cases}
h(x) = a(x-x_0)^2  \text{ for }|x-x_0|<\rho \\
h \text{ is non-increasing in } (-\infty,x_0)\\
h \text{ is non-decreasing in }  (x_0,+\infty)\\
\end{cases}\eeq 
for some $a,\rho>0$. 
Fix $\sigma>0$ such that $4\sigma<\rho$. 

We are going to construct a global in space  supersolution of  \eqref{uepeq} in an  interval around  $t_0$ 
 by using  Lemma \ref{hepsupersolution}. 
We cannot apply the lemma to the function $\eta$ as the required flat condition  is not satisfied by $\eta$.  
Therefore, we consider the function $\eta^\sigma(t,x):=h^\sigma(x)+g(t)$, where $h^\sigma(x)=\max\{ h(x), a(2\sigma)^2\}$. Notice that $\eta^\sigma\geq\eta$,
$\eta^\sigma\leq\eta+ a(2\sigma)^2$ and  $\eta^\sigma$
is constant in the interval $(x_0-2\sigma,x_0+2\sigma)$.  However, $\eta^\sigma$  is not of class  $C^{1,1}$.  To overcome this problem, we consider any $C^{1,1}$ function $\tilde\eta^\sigma$ such that 
$$\tilde\eta^\sigma(t,x)=\tilde h^\sigma(x)+g(t), $$ with $\tilde h^\sigma$ satisfying 

\beq\label{parabolabis}
\begin{cases}
\tilde h^\sigma\geq h^\sigma\geq h \\
\tilde h^\sigma(x)= h^\sigma(x)=a(2\sigma)^2 &\text{if }|x-x_0|\le \sigma\\
\tilde h^\sigma \text{ is  non-increasing in } (-\infty,x_0)\\
\tilde h^\sigma \text{ is  non-decreasing in }(x_0,+\infty).\\
\end{cases}\eeq 
For example, the function  $\tilde h^\sigma(x)$ defined for $|x-x_0|<\rho$ by
$$\tilde h^\sigma(x)=\begin{cases}2a(x-x_0-\sigma)^2+4a\sigma^2&\text{if }x>x_0+\sigma\\
4a\sigma^2&\text{if }|x-x_0|\le \sigma\\
2a(x-x_0+\sigma)^2+4a\sigma^2&\text{if }x<x_0-\sigma\\
\end{cases}$$
and extended to be monotonic, bounded  and above $h$ outside $ (x_0-\rho,x_0+\rho)$ would work. 
Moreover, since $u^\ep$ is bounded uniformly in $\ep$, without loss of generality we can assume that for all $\ep>0$ and  all $t>0$,
\beq\label{etatildegretuep1}\tilde\eta^\sigma(t,x)\geq u^\ep(t,x)\quad \text{ if  }|x-x_0|>1.\eeq
Finally,  since $u^+-\eta$ attains a strict maximum at $(t_0,x_0)$ and $\tilde\eta^\sigma\ge\eta$, for all $R>0$ there exists $\ep_0=\ep_0(R)$ such that for $\ep<\ep_0$
\beq\label{etatildegretuep2} u^\ep(t,x)-\tilde\eta^\sigma(t,x)<0\quad\text{for all }(t,x)\in Q_{1,1}(t_0,x_0)\setminus Q_{R,R}(t_0,x_0).\eeq
Next, we set  
 \beq\label{therightc}c:=\frac{ 1}{4c_0L},\eeq
with $L$ to be determined. 
Define $x_i^0$ and $b_i$, $i=1,\ldots, N_\ep$  as in \eqref{xi} and  \eqref{bidef} for the function $\tilde\eta^\sigma(t_0-c\sigma,\cdot)$. 
Since $\tilde\eta^\sigma(t,\cdot)$ is monotonic in  $(-\infty,x_0)$ and  $(x_0,\infty)$,  the number of particles in each interval is bounded by 
$(\sup \tilde\eta^\sigma-\inf \tilde\eta^\sigma)/\ep$ so that $\ep N_\ep\leq C$. In particular the condition $\ep^2 N_\ep\delta=o_\ep(1)$ is satisfied and we are in position to apply Proposition \ref{approxpropfinal} (recall Remark \ref{importantrem}) to get
\beq\label{initialconmaingradf0}\tilde\eta^\sigma(t_0-c\sigma,x)= \sum_{i = 1}^{N_\ep}\ep \phi\left (\dfrac{x-x_i^0}{\ep \delta}, b_i\right ) +\ep M_\ep+o_\ep(1),
\eeq
where $M_\ep:=\lceil \tilde\eta^\sigma(t_0-c\sigma, -\infty)/\ep\rceil$. By \eqref{etatildegretuep1}, \eqref{etatildegretuep2} with $R=c\sigma$, and \eqref{initialconmaingradf0} we also have
\beq\label{initialconmaingradf1}u^\ep(t_0-c\sigma,x)\leq  \sum_{i = 1}^{N_\ep}\ep \phi\left (\dfrac{x-x_i^0}{\ep \delta}, b_i\right ) +\ep M_\ep+o_\ep(1).\eeq
Let $x_i(t)$ be the solution of the ODE system  \eqref{levelsetode} with initial condition  $x_i(t_0-c\sigma)=x_i^0$, that is 
$$x_i(t)=x_i^0-b_ic_0L[t - (t_0 - c\sigma)].$$
Define $$H^\ep(t,x) := \sum_{i = 1}^{N_\ep}\ep \phi\left (\dfrac{x-x_i(t)}{\ep \delta}, b_i\right ) + \sum_{i = 1}^{N_\ep} \ep \delta \psi\left (\dfrac{x-x_i(t)}{\ep \delta}, b_i\right ) +\ep M_\ep+\dfrac{\ep \delta L}{\alpha}+\ep\left\lceil \frac{o_\ep(1)}{\ep}\right\rceil,$$
with $\phi$ and $\psi$ the solutions of \eqref{phi} and \eqref{psi} respectively. Notice that 
\beq\label{psiboundedporrfconveta0}\left|\sum_{i = 1}^{N_\ep}\ep \delta \psi\left(\dfrac{x-x_i(t)}{\ep \delta}, b_i\right )\right| \leq C\ep N_\ep\delta\leq C\delta=o_\ep(1).\eeq
By \eqref{initialconmaingradf1} and  \eqref{psiboundedporrfconveta0} we can choose $o_\ep(1)$ in the definition of $H^\ep$ such that 
$$H^\ep(t_0-c\sigma,x)\ge u^\ep(t_0-c\sigma,x).$$
 Now, by Lemma \ref{hepsupersolution}  if 
 \beq\label{L}L=\frac{C_0}{\sigma^\frac12}\eeq with $C_0$  large enough, the function  $H^\ep$
is supersolution of \eqref{uepeq} in $[t_0-c\sigma, t_0+c\sigma]\times \R$.
Therefore, by the comparison principle, we obtain 
\beq\label{comparsionproofconograd}H^\ep(t,x) \geq u^\ep(t,x)\quad\text{for any }(t,x) \in [t_0-c\sigma, t_0+c\sigma] \times \R.\eeq
Consider a sequence $(t_\ep,x_\ep)$ converging to $(t_0,x_0)$ as $\ep \to 0$. By \eqref{comparsionproofconograd} and \eqref{psiboundedporrfconveta0}  we have that
\beq\label{beforelastlem}
\begin{split}
    u^\ep(t_\ep,x_\ep) 
    &\leq H^\ep(t_\ep,x_\ep)\\
      &= \sum_{i=1}^{N_\ep} \ep \phi \left ( \dfrac{x_\ep-x_i(t_\ep)}{\ep \delta}, b_i \right ) +\ep M_\ep+ o_\ep(1)\\
    &= \sum_{i=1}^{N_\ep} \ep \phi \left ( \dfrac{(x_\ep +b_ic_0L(t_\ep - t_0 + c\sigma))- x_i^0}{\ep \delta}, b_i \right ) +\ep M_\ep+ o_\ep(1).\\
    \end{split}
\eeq
Next we use the following result. 
\begin{lem}\label{lastlem}
We have that, 
\beq\label{lastlemeq}
\begin{split}
&\sum_{i=1}^{N_\ep} \ep \phi \left ( \dfrac{(x_\ep +b_ic_0L(t_\ep - t_0 + c\sigma))- x_i^0}{\ep \delta}, b_i \right ) \\&
=\sum_{i=1}^{N_\ep} \ep \phi \left ( \dfrac{(x_\ep +c_0L(t_\ep - t_0 + c\sigma))- x_i^0}{\ep \delta}, b_i \right )+ o_\ep(1).
  \end{split}
\eeq
\end{lem}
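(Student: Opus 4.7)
The two sums in \eqref{lastlemeq} agree term-by-term whenever $b_i=+1$, so only the indices with $b_i=-1$ contribute to the difference. Set $s := c_0L(t_\ep-t_0+c\sigma)$ and note that, by \eqref{therightc}, $c_0Lc=1/4$ together with $t_\ep\in[t_0-c\sigma,\,t_0+c\sigma]$ yield $0\le s\le\sigma/2$. Using $\phi(\cdot,-1)=\hat\phi(\cdot)=\phi(-\,\cdot\,)-1$ to rewrite the $b_i=-1$ terms, the quantity to control is
\begin{equation*}
D_\ep := \sum_{i:\,b_i=-1}\ep\left[\phi\!\left(\frac{x_i^0+s-x_\ep}{\ep\delta}\right) - \phi\!\left(\frac{x_i^0-s-x_\ep}{\ep\delta}\right)\right].
\end{equation*}

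My first step is geometric. By \eqref{parabolabis}, $\tilde\eta^\sigma(t_0-c\sigma,\cdot)$ is constant on $(x_0-\sigma,x_0+\sigma)$ and non-increasing on $(-\infty,x_0)$, so every $\ep$-level-set point with $b_i=-1$ satisfies $x_i^0\le x_0-\sigma$. Since $x_\ep\to x_0$, for $\ep$ small we have $|x_\ep-x_0|\le\sigma/10$, hence $x_\ep-x_i^0\ge 9\sigma/10$ uniformly in $i$. Combined with $|s|\le\sigma/2$, this gives
\begin{equation*}
(x_i^0-x_\ep)^2-s^2 \;\ge\; \tfrac{56}{81}(x_i^0-x_\ep)^2,
\end{equation*}
and both arguments $z_i^\pm:=(x_i^0\pm s-x_\ep)/(\ep\delta)$ tend uniformly to $-\infty$ and satisfy $|z_i^\pm|\ge 1$ for $\ep$ small.

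The second step is the asymptotic expansion. By \eqref{phiinfinity}, since $H(z_i^\pm)=0$, we have $\phi(z_i^\pm)=-1/(\alpha\pi z_i^\pm)+O(1/(z_i^\pm)^2)$, so a direct computation yields
\begin{equation*}
\phi(z_i^+)-\phi(z_i^-) \;=\; \frac{2s\,\ep\delta}{\alpha\pi\big((x_i^0-x_\ep)^2-s^2\big)} \;+\; O\!\left(\frac{\ep^2\delta^2}{(x_i^0-x_\ep)^2}\right).
\end{equation*}
Summing and using the lower bound above together with estimate \eqref{i/k^2sum} of Lemma \ref{lemdistxi} applied to $\tilde\eta^\sigma$ (whose Lipschitz constant is a finite, $\ep$-independent constant), I would conclude
\begin{equation*}
|D_\ep| \;\le\; \frac{2s\,\delta}{\alpha\pi}\sum_{i:\,b_i=-1}\frac{\ep^2}{(x_i^0-x_\ep)^2-s^2} \;+\; C\ep\delta^2\sum_{i:\,b_i=-1}\frac{\ep^2}{(x_i^0-x_\ep)^2} \;=\; O(\sigma\delta)+O(\ep\delta^2) \;=\; o_\ep(1),
\end{equation*}
which is the claim.

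The main obstacle is the first step: verifying that the denominators $(x_i^0-x_\ep)^2-s^2$ stay uniformly bounded away from $0$. This is precisely the role of the flat core of $\tilde\eta^\sigma$ on $(x_0-\sigma,x_0+\sigma)$ coupled with the choice $c=1/(4c_0L)$, which together force the $b_i=-1$ particles to stay at distance at least $9\sigma/10$ from $x_\ep$ while $|s|\le\sigma/2$. Once this separation is in hand, the rest is a clean application of the $-1/(\alpha\pi z)$ decay of $\phi$ at $-\infty$ and of the summability bound \eqref{i/k^2sum}.
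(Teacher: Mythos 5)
Your proposal is correct and takes essentially the same route as the paper: you isolate the $b_i=-1$ indices, use the flat core of $\tilde\eta^\sigma$ on $(x_0-\sigma,x_0+\sigma)$ together with $c=1/(4c_0L)$ to ensure the arguments of $\phi(\cdot,-1)$ stay bounded away from the transition region by $\asymp\sigma$, and then apply the tail asymptotics \eqref{phiinfinity} and the summability bound from Lemma \ref{lemdistxi}. The only cosmetic difference is that you estimate the difference $D_\ep$ of the two sums term by term, whereas the paper shows each of the two $b_i=-1$ sub-sums separately equals $o_\ep(1)-\ep N_\ep^-$ (using only $\ep N_\ep\le C$ rather than \eqref{i/k^2sum}); the conclusion and the underlying mechanism are identical.
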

We postpone the proof of Lemma \ref{lastlem} to Section \ref{lemmatasec}. 

Now, from \eqref{beforelastlem},  Lemma  \ref{lastlem},  Proposition \ref{approxpropfinal}, the definition of $x_i^0$ and using that $\tilde\eta^\sigma(t,x)\leq \eta(t,x)+C\sigma^2$ if $|x-x_0|\le\sigma$, 
 we infer that

\beqs
\begin{split}
    u^\ep(t_\ep,x_\ep) 
    &\leq \sum_{i=1}^{N_\ep} \ep \phi \left ( \dfrac{(x_\ep +b_ic_0L(t_\ep - t_0 + c\sigma))- x_i^0}{\ep \delta}, b_i \right )+\ep M_\ep+ o_\ep(1)\\
    &= \tilde\eta^\sigma(t_0-c\sigma, x_\ep + c_0L(t_\ep - t_0 + c\sigma)) + o_\ep(1)\\
    &\leq \eta(t_0-c\sigma, x_\ep + c_0L(t_\ep - t_0 + c\sigma)) + o_\ep(1)  + C\sigma^2.
\end{split}
\eeqs
 By passing to $\limsup^*$ as $\ep\to 0$, we obtain 
\beqs
u^+(t_0,x_0)  \leq \eta(t_0-c\sigma, x_0 + cc_0L\sigma) + C\sigma^2.
\eeqs
Since $u^+(t_0,x_0) = \eta(t_0,x_0)$, we also have 
\beqs
    \eta(t_0,x_0) - \eta(t_0-c\sigma,x_0) 
    \leq \eta(t_0-c\sigma, x_0 + cc_0L\sigma) - \eta(t_0-c\sigma,x_0) + C\sigma^2,
\eeqs
by subtracting $\eta(t_0-c\sigma,x_0)$ on both sides. 
Now, recalling the expression of $\eta$ with $h$ as in \eqref{parabola}, \eqref{therightc} and  \eqref{L}, we see that the inequality above yields
\beqs 
 \eta(t_0,x_0) - \eta(t_0-k_0\sigma^\frac32,x_0)\leq a\left(\frac{\sigma}{4}\right)^2 +C\sigma^2,
 \eeqs where $k_0:=1/(4c_0C_0)$. 
By dividing both sides by $k_0\sigma^\frac32$ and  taking the limit as $\sigma \to 0^+$, we finally get \eqref{mainresultcogl}.

\subsection{Case 2: Test function $\eta$ with $\partial_x \eta (t_0,x_0) \not = 0$} \hfill

Without loss of generality we assume that
\beq\label{deretacond}\partial_x\eta(t_0,x_0)>0.\eeq
The goal is  to show that 
\beq\label{goal}\partial_t\eta(t_0,x_0)\leq c_0 \partial_x\eta(t_0,x_0)\,\I[\eta(t_0,\cdot)](x_0).\eeq

We start with the following asymptotic result whose proof is postponed to Section \ref{additional}.
\begin{lem}\label{asymptou-u+}Let $v_1,\,v_2,\,w_1,\,w_2$ be defined as in \eqref{u_01} and \eqref{u_02}. Then, there exists $L>0$ such that for all $(t,x)\in(0,+\infty)\times\R$
\beqs\label{u_03}\max\{v_1(x+c_0Lt),w_1(x-c_0Lt)\}\leq u^-(t,x)\leq u^+(t,x)\leq \min\{v_2(x+c_0Lt),w_2(x-c_0Lt)\}.\eeqs
\end{lem}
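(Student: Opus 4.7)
The plan is to establish the four non-trivial inequalities
\beqs
v_1(x+c_0Lt)\leq u^-(t,x),\quad w_1(x-c_0Lt)\leq u^-(t,x),
\eeqs
\beqs
u^+(t,x)\leq v_2(x+c_0Lt),\quad u^+(t,x)\leq w_2(x-c_0Lt),
\eeqs
by constructing, for each monotone envelope $v_j,w_j$, a global-in-space and global-in-time sub/super solution of \eqref{uepeq} that approximates the appropriate translate of the envelope, and then invoking the comparison principle (Proposition \ref{comparisonuep}) together with Proposition \ref{approxpropfinal} to pass to the limit. The inequality $u^-\leq u^+$ is immediate from the definitions of $\limsup^*$ and $\liminf_*$.

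I would detail the construction for $u^+(t,x)\leq v_2(x+c_0Lt)$; the three remaining inequalities follow by symmetric modifications. Apply the construction of Lemma \ref{hepsupersolution} directly to $v_2$: since $v_2$ is globally non-decreasing with $v_2(-\infty)=0$ and bounded (by $\sup u_0$), every level-set point carries orientation $b_i=+1$, the smallness condition $\ep^2 N_\ep\delta=o_\ep(1)$ is automatic (monotonicity forces $\ep N_\ep\leq \sup v_2$), and under \eqref{levelsetode} with $L>0$ the particles $x_i(t)=x_i^0-c_0Lt$ translate rigidly, so no collisions ever occur. The "flat middle" hypothesis of Lemma \ref{hepsupersolution} was needed only to produce the $O(\sigma^{-1/2})$ estimates \eqref{finalestilemmasupe1}--\eqref{finalestilemmasupe2}; when all $b_i$ share the same sign, Lemma \ref{sumxialwaysfiniteprop} bounds $\bigl|\sum_{i\neq i_0}b_i\ep/(x-x_i(t))\bigr|$ by a constant depending only on the $C^{1,1}$ norm of $v_2$, so the chain \eqref{lastcal}--\eqref{sumphilemmsupe} now yields $\Lambda(t,x)\geq L-C$, and any $L$ larger than this $C$ produces a supersolution $H^\ep$ globally in time.

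I would then verify the initial comparison. By Proposition \ref{approxpropfinal} and Remark \ref{importantrem}, $H^\ep(0,\cdot)=v_2(\cdot)+o_\ep(1)\geq u_0+o_\ep(1)$; adding an $\ep\lceil o_\ep(1)/\ep\rceil$ upward correction to $H^\ep$, exactly as in Section \ref{mainthmsubgrad0}, Case 1, restores $H^\ep(0,\cdot)\geq u^\ep(0,\cdot)$ pointwise. Proposition \ref{comparisonuep} propagates this globally, and taking $\limsup^*$ as $\ep\to 0$ while identifying $H^\ep(t,x)=v_2(x+c_0Lt)+o_\ep(1)$ (the $\psi$-correction and the $\ep\delta L/\alpha$ term are $o_\ep(1)$ via \eqref{psiboundedporrfconveta0} and $\ep N_\ep\leq C$) yields the desired bound.

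For $u^+\leq w_2(x-c_0Lt)$ the same argument runs with $w_2$ in place of $v_2$ and all $b_i=-1$, so the particles drift to the right at speed $c_0L$ and translate $w_2$ rightward. For the two lower bounds the same construction becomes a subsolution: replacing $L$ by $-L$ throughout (both in \eqref{levelsetode} and in the defining equation \eqref{psi} of $\psi$) reverses every term in \eqref{lastcal} that depends on $L$, producing $\Lambda(t,x)\leq L+C\leq 0$ for $-L$ large, and the particles then drift in the direction that shifts the envelope the opposite way; the initial comparison $H^\ep(0,\cdot)\leq v_1\leq u^\ep(0,\cdot)$ up to $o_\ep(1)$ then propagates, and $\liminf_*$ delivers the bound. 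The main technical point I expect to pin down is that a single $L$ can serve all four inequalities simultaneously; this is settled by taking the maximum of the four constants produced, each finite and depending only on $W$ and the $C^{1,1}$ norms of $v_1,v_2,w_1,w_2$.
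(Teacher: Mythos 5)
Your proof is correct and follows the same underlying idea as the paper: approximate the translated monotone envelopes by a sum of phase transitions, build sub/supersolutions from them, and transfer to $u^\ep$ by the comparison principle. The paper, however, packages this more economically: it sets $v^\ep$ and $w^\ep$ to be the solutions of \eqref{uepeq} with initial data $v_2$ and $w_2$, uses the comparison principle to get $u^\ep\leq v^\ep$, $u^\ep\leq w^\ep$, and then simply cites Lemmas \ref{supermonotincr} and \ref{supermonotincr2} (whose proofs are in \cite{patsan}) for the asymptotics $v^\ep(t,x)\leq v_2(x+c_0Lt)+o_\ep(1)$ and $w^\ep(t,x)\leq w_2(x-c_0Lt)+o_\ep(1)$, with the symmetric argument for the lower envelopes. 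What you have done is essentially reprove Lemmas \ref{supermonotincr}--\ref{supermonotincr2} directly by adapting Lemma \ref{hepsupersolution} to the monotone setting, which is a valid and self-contained route; your observations that in the monotone case there are no collisions, that $\ep N_\ep$ is automatically bounded, and that Lemma \ref{sumxialwaysfiniteprop} replaces the $O(\sigma^{-1/2})$ bound by a uniform constant (so the supersolution extends to all $t>0$) are all correct. The only cosmetic issues are the slightly garbled sign in \textquotedblleft $\Lambda(t,x)\leq L+C\leq 0$ for $-L$ large\textquotedblright{} (you mean $\Lambda\leq -L_0+C$ with $L_0>C$), and that for $w_2$ you need the shifted normalization $\ep M_\ep$ from Remark \ref{importantrem} since $w_2(-\infty)\neq0$; neither affects the substance.
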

Without loss of generality we may assume that $\eta$ satisfies 
\beq\label{etaasympt}\eta(t,x)=v_2(x+c_0Lt) \quad \text{if }x<-K,\qquad
 \eta(t,x)=w_2(x-c_0Lt)\quad \text{if }x>K,\eeq
 for $K$ large enough and $L>0$ given in
Lemma \ref{asymptou-u+}. 
Indeed, assume that  \eqref{goal} holds true for any test function satisfying \eqref{etaasympt}. If $\tilde\eta$ is any test function satisfying \eqref{inequalitycomparison}, by Lemma  \ref{asymptou-u+} we can always build a function $\eta$ such that 
$\eta=\tilde \eta$ in a neighborhood of $(t_0,x_0)$, $\eta\le \tilde \eta$, and $\eta$ satisfies \eqref{etaasympt}. By $\partial_t\eta(t_0,x_0)=\partial_t\tilde\eta(t_0,x_0)$, 
$\partial_x\eta(t_0,x_0)=\partial_x\tilde\eta(t_0,x_0)$ and $\I[\eta(t_0,\cdot)](x_0)\leq\I[\tilde\eta(t_0,\cdot)](x_0)$ and \eqref{goal} we infer that 
$$\partial_t\tilde\eta(t_0,x_0)\leq c_0 \partial_x\tilde\eta(t_0,x_0)\,\I[\tilde\eta(t_0,\cdot)](x_0),$$ as desired. 
Condition \eqref{etaasympt} implies that for any $T>0$ the points  $x_i=x_i(t)$ defined as in \eqref{xi} for the function $\eta(t,\cdot)$ with $t\in[0,T]$ belong to the set $[-K_\ep-c_0LT, K_\ep+c_0LT]$ with $K_\ep$ defined as in Section \ref{Aepsec}. Therefore, the number of such particles $N_\ep=N_\ep(t)$  satisfies 
$N_\ep\leq C (K_\ep+c_0LT)/\ep$. In particular,   by \eqref{Aepo0bound}, 
\beq\label{Nepbouneta}\ep^2N_\ep\delta=o_\ep(1).\eeq
This will allow us to apply Proposition \ref{approxpropfinal} to $v(x)=\eta(t,x)$ with $t$ close to $t_0$.

Next, the proof of \eqref{goal} is an adaptation of the proof given in \cite{patsan} in the monotonic case, therefore we will skip some details and refer to the corresponding results in 
 \cite{patsan}.
 
Suppose by contradiction that 
\beq\label{contradictionhp} \partial_t\eta(t_0,x_0)> c_0 \partial_x\eta(t_0,x_0)\,\I[\eta(t_0,\cdot)](x_0).\eeq 
Denote
$$L_0:=\I[\eta(t_0,\cdot)](x_0).$$
By \eqref{deretacond} and \eqref{contradictionhp}, there exist $0<\rho<1$ and $L_1>0$ such that 
\beq\label{etaxposirho}\partial_x\eta(t,x)\ge \frac{\partial_x\eta(t_0,x_0)}{2}>0\quad\text{for all  }(t,x)\in Q_{2\rho,2\rho}(t_0,x_0),\eeq
and 
\beq\label{contractconserho}\partial_t\eta(t,x)\ge c_0\partial_x\eta(t,x)(L_0+L_1)\quad\text{for all  }(t,x)\in Q_{2\rho,2\rho}(t_0,x_0).\eeq

Define $x_i^0 = x_i(t_0)$  and $b_i$, $i=1,\ldots, N_\ep$,   as in \eqref{xi} and  \eqref{bidef} for the function  $\eta$ at $t=t_0$. 
 For $0<R\ll \rho$ to be determined,  let  $x_{M_\rho}^0$  be the biggest point which is smaller than $x_0-(\rho+R)$, and 
  $x_{N_\rho}^0$  the lowest point bigger than $x_0+(\rho+R)$, that is
\beq \label{xm}
x_{M_\rho}^0 < x_0-(\rho+R) \leq x_{M_\rho+1}^0
\eeq
and 
\beq \label{xn}
x_{N_\rho-1}^0\leq x_0+(\rho+R)<x_{N_\rho}^0.
\eeq
 In other words, $ \{x_{M_\rho}^0, x_{M_\rho+1}^0,...,x_{N_\rho-1}^0,x_{N_\rho}^0\}$ are the particle points in the interval 
 $(x_0-(\rho+R), x_0+(\rho+R))$. 
By definition, there exists $J_0\in \{1,...,N_\ep\}$ such that $\eta(t_0,x_{M_\rho}^0) = J_0\ep,$ and since $\eta(t_0,\cdot)$ is increasing in  $(x_0-(\rho+R), x_0+(\rho+R))$, we have that 
$$\eta(t_0,x_{M_\rho+i}^0) = (i+J_0)\ep, \quad \text{for} \ i = 0,1,...,N_\rho-M_\rho:=K_\rho.$$
Define $B_0:=\partial_x\eta(t_0,x_0)/(2\|\partial_t \eta\|_\infty)$. Now, for any time $t$ such that $|t-t_0|<B_0R$, we define a set
\beq \label{Xi}
X_i(t) := \{x\in (x_0-(\rho+3R),x_0+(\rho+3R)) \ : \ \eta(t,x) = (i+J_0)\ep\},
\eeq 
for $i = 0,1,...,K_\rho$.

\begin{lem}\label{partcilescontrollem}
Let $B_0:=\partial_x\eta(t_0,x_0)/(2\|\partial_t \eta\|_\infty)$ and $X_i(t)$ be defined by  \eqref{Xi}, $i=0,\ldots, K_\rho$.
Then, there exists $\ep_0=\ep_0(\rho)$ such that for $\ep<\ep_0$ and $R<\rho/3$, $X_i(t)$ is a singleton, that is, $X_i(t) = \{\z^i(t)\}$, and $\z^i\in C^1(t_0-B_0R,t_0+B_0R)$ and for $|t-t_0|<B_0R$,
\beq\label{xdotbound}|\dot{\z}^i(t)|\le B_0^{-1},\eeq
\beq\label{x_ncontrolenq}x_0+\rho<\z^{K_\rho}(t)<x_0+\rho+3R,\eeq
\beq\label{x_mcontrolenq}x_0-(\rho+3R) <\z^{0}(t)<x_0-\rho.\eeq
In particular $(t,\z^i(t))\in Q_{2\rho,2\rho}(t_0,x_0)$.
\end{lem}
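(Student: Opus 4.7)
The plan is to realize each $\zeta^i$ as the unique $C^1$ curve produced by the implicit function theorem applied to $\eta(t,x)-(i+J_0)\ep=0$ starting at $(t_0,x_{M_\rho+i}^0)$, and to use the positive lower bound \eqref{etaxposirho} on $\partial_x\eta$ throughout $Q_{2\rho,2\rho}(t_0,x_0)$ both to run the IFT and to rule out spurious roots. Setting $a:=\partial_x\eta(t_0,x_0)/2$, I would first apply \eqref{proplippart2} of Lemma \ref{lemdistxi} on the interval $(x_0-2\rho,x_0+2\rho)$ where $|\partial_x\eta(t_0,\cdot)|\ge a$: consecutive level-set points are at distance at most $\ep/a$. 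Combined with \eqref{xm}--\eqref{xn}, this yields
$$x_0-(\rho+R)-\ep/a \le x_{M_\rho}^0 < x_0-(\rho+R), \qquad x_0+(\rho+R)< x_{N_\rho}^0\le x_0+(\rho+R)+\ep/a.$$
Choosing $\ep_0(\rho):=aR$ ensures $\ep/a<R$, so every $x_{M_\rho+i}^0$ lies safely inside $(x_0-(\rho+2R),x_0+(\rho+2R))$; in particular it lies in the open interval defining $X_i(t_0)$.

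Next, since $R<\rho/3$, the spatial slab $(x_0-(\rho+3R),x_0+(\rho+3R))$ is contained in $(x_0-2\rho,x_0+2\rho)$, and shrinking $R$ (or equivalently $\rho$) so that $B_0R\le 2\rho$—harmless because the conclusion is monotone in $R$—puts the full space-time cylinder $(t_0-B_0R,t_0+B_0R)\times(x_0-(\rho+3R),x_0+(\rho+3R))$ inside $Q_{2\rho,2\rho}(t_0,x_0)$. On this cylinder \eqref{etaxposirho} gives $\partial_x\eta\ge a>0$, so the implicit function theorem produces a unique $C^1$ curve $\zeta^i$ defined near $t_0$ with $\zeta^i(t_0)=x_{M_\rho+i}^0$ and
$$\dot\zeta^i(t)=-\frac{\partial_t\eta(t,\zeta^i(t))}{\partial_x\eta(t,\zeta^i(t))},$$
which immediately yields $|\dot\zeta^i(t)|\le\|\partial_t\eta\|_\infty/a=B_0^{-1}$, i.e.\ \eqref{xdotbound}. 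A standard open/closed continuation argument then extends $\zeta^i$ to the whole interval $(t_0-B_0R,t_0+B_0R)$: so long as $(t,\zeta^i(t))$ stays inside the cylinder, integrating the velocity bound gives $|\zeta^i(t)-\zeta^i(t_0)|<R$, which combined with the Step 1 initial estimates yields
$$x_0-(\rho+3R)<\zeta^0(t)<x_0-\rho, \qquad x_0+\rho<\zeta^{K_\rho}(t)<x_0+\rho+3R,$$
proving \eqref{x_ncontrolenq}--\eqref{x_mcontrolenq} and keeping $(t,\zeta^i(t))\in Q_{2\rho,2\rho}(t_0,x_0)$, so the IFT continuation never terminates before $|t-t_0|=B_0R$.

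Finally, uniqueness: the strict monotonicity of $\eta(t,\cdot)$ on $(x_0-2\rho,x_0+2\rho)\supset(x_0-(\rho+3R),x_0+(\rho+3R))$, for every $|t-t_0|<2\rho$, implies that for each value $(i+J_0)\ep$ there is at most one solution $x$ in this slab, so $X_i(t)=\{\zeta^i(t)\}$. The one subtle point, which is where I expect the argument to require the most care, is precisely this interplay of the two radii: one needs the velocity estimate $|\dot\zeta^i|\le B_0^{-1}$ to hold \emph{throughout} the continuation, which in turn requires $\zeta^i(t)$ to remain in $(x_0-(\rho+3R),x_0+(\rho+3R))$—so the estimate and the confinement must be established simultaneously by the continuation argument, with the buffer $R$ between the initial data (confined to $(x_0-(\rho+2R),x_0+(\rho+2R))$) and the outer radius $\rho+3R$ providing exactly enough room to close the bootstrap on the time interval $|t-t_0|<B_0R$.
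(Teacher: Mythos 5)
Your proof is correct and takes essentially the same route as the paper, which simply defers the details to Lemma 5.1 of \cite{patsan}: apply the implicit function theorem at $(t_0,x_{M_\rho+i}^0)$, use the lower bound \eqref{etaxposirho} on $\partial_x\eta$ over $Q_{2\rho,2\rho}(t_0,x_0)$ to obtain $|\dot\zeta^i|\le B_0^{-1}$, run a confinement/continuation bootstrap via $|\zeta^i(t)-\zeta^i(t_0)|<R$, and invoke local strict monotonicity of $\eta(t,\cdot)$ for uniqueness of the level-set point, which is precisely the new observation the paper adds relative to the globally monotone case of \cite{patsan}. Your two cautionary remarks — that the threshold you call $\ep_0(\rho)$ in fact depends on $R$ through the requirement $\ep/a<R$, and that one needs $B_0R\le 2\rho$ to keep the spacetime cylinder inside $Q_{2\rho,2\rho}(t_0,x_0)$ — are both accurate and both implicitly absorbed by the paper's regime $R\ll\rho$ with $R$ fixed before $\ep\to0$.
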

\begin{proof}
By the monotonicity of $\eta$, $X_i(t)$ is a singleton. The rest of the proof of Lemma \ref{partcilescontrollem} directly follows the proof of Lemma 5.1 in \cite{patsan}.
\end{proof}
Therefore, by choosing $R<\rho/3$, we have that $(t,\zeta^i(t)) \in Q_{2\rho,2\rho}(t_0,x_0)$ and 
\beq \label{mainzetaieq} \eta(t,\zeta^i(t)) = (i+J_0)\ep,\eeq 
for $i = 0,1,...,K_\rho.$ By Lemma \ref{partcilescontrollem}, $\zeta^i(t)$ is of class $C^1(t_0-B_0R, t_0+B_0R)$, allowing us to differentiate \eqref{mainzetaieq} in $t$, which yields
\beqs
\partial_t \eta(t,\z^i(t)) + \partial_x \eta(t,\z^i(t)) \dot{\z}^i(t) = 0.
\eeqs
Using \eqref{contractconserho}, for $|t-t_0|<B_0R$, we obtain 
\beq\label{dotzetaiimport}-\dot \z_i(t)\geq c_0(L_0+L_1),\quad i=0,1,\ldots,K_\rho.\eeq

Next, we will construct a supersolution of \eqref{uepeq} in $Q_{B_0R,R}(t_0,x_0)$ for $R\ll\rho< 1$. Since the maximum of $u^+-\eta$ is strict, there exists $\gamma_R>0$ such that 
\beq\label{u-etastrictmax} u^+-\eta\le- 2\gamma_R<0 \quad\text{in } Q_{2\rho,2\rho}(t_0,x_0)\setminus Q_{B_0R,R}(t_0,x_0).\eeq
Then, we define
\beq\label{Phiep}\Phi^\ep(t,x):=\begin{cases}h^\ep(t,x) +\frac{\ep\delta L_1}{\alpha}-\ep\left\lfloor \frac{\gamma_R}{\ep}\right \rfloor&\text{for }(t,x)\in Q_{B_0R,\frac\rho 2}(t_0,x_0)\\
u^\ep(t,x)& \text{outside}, \\
\end{cases}
\eeq
where
\beq\label{hfunct}\begin{split}
h^\ep(t,x)&=\sum_{i=0}^{K_\rho}\ep\left(\phi\left(\frac{x-\z^i(t)}{\ep \delta},1\right)+\delta\psi\left(\frac{x-\z^i(t)}{\ep \delta},1\right)\right)\\&+
\sum_{i=1}^{M_\rho-1}\ep\phi\left(\frac{x-x_i^0}{\ep \delta},b_i\right)+\sum_{i=N_\rho+1}^{N_\ep}\ep\phi\left(\frac{x-x_i^0}{\ep \delta},b_i\right)
\end{split}
\eeq
with  $\phi$ a solution the  \eqref{phi} and  $\psi$  the solution  of \eqref{psi} with $L=L_0+L_1$.

\begin{lem}\label{supersolutiponlemma}
There exist $0<R\ll\rho$ and $\ep_0=\ep_0(R,\rho)>0$ such that for any $\ep<\ep_0$,  the function $\Phi^\ep$ defined by \eqref{Phiep} satisfies
\beq\label{mainlem1}\Phi^\ep\ge u^\ep\quad\text{ outside }Q_{B_0R,R}(t_0,x_0),\eeq 
\beq\label{mainlem3}\Phi^\ep\le \eta+o_\ep(1)-\ep\left\lfloor \frac{\gamma_R}{\ep}\right \rfloor\quad\text{ in }Q_{B_0R,R}(t_0,x_0),\eeq
and 
\beq\label{mainlem2}\delta\partial_t \Phi^\ep\ge\I [\Phi^\ep]-\displaystyle\frac{1}{\delta}W'\left(\frac{\Phi_\ep}{\ep}\right)\quad\text{in }Q_{B_0R,R}(t_0,x_0),\eeq
\end{lem}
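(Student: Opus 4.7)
My plan addresses the three conclusions of Lemma~\ref{supersolutiponlemma} separately. The first two (\eqref{mainlem1} and \eqref{mainlem3}) are zeroth-order comparisons that follow from the approximation Proposition~\ref{approxpropfinal} together with the strict maximum property \eqref{u-etastrictmax}; the third (\eqref{mainlem2}) is a pointwise supersolution computation modelled on Lemma~\ref{hepsupersolution}, where the choices of $L=L_0+L_1$ in the defining equation \eqref{psi} for $\psi$ and of the constant $\ep\delta L_1/\alpha$ inside \eqref{Phiep} are precisely what makes the contradiction hypothesis \eqref{contractconserho} (via \eqref{dotzetaiimport}) leave a positive residual of size $L_1$.

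For \eqref{mainlem3}, the key observation is that in $Q_{B_0R,R}(t_0,x_0)$ the function $\eta(t,\cdot)$ is strictly increasing by \eqref{etaxposirho}, its $\ep$-level sets in the window $(x_0-(\rho+3R),x_0+(\rho+3R))$ are exactly $\{\z^i(t)\}_{0\le i\le K_\rho}$ by \eqref{mainzetaieq}, while the exterior $x_i^0$'s can be viewed as level sets of an auxiliary $\tilde\eta(t,\cdot)$ agreeing with $\eta(t,\cdot)$ in the interior window and with $\eta(t_0,\cdot)$ outside. Applying Proposition~\ref{approxpropfinal} to $\tilde\eta(t,\cdot)$ gives $h^\ep(t,x)=\tilde\eta(t,x)+o_\ep(1)=\eta(t,x)+o_\ep(1)$ for $x\in B_R(x_0)$; combined with $\sum\ep\delta\psi=O(\ep N_\ep\delta)=o_\ep(1)$ via \eqref{Nepbouneta} and $\ep\delta L_1/\alpha=o_\ep(1)$, this proves \eqref{mainlem3}. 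For \eqref{mainlem1}, equality $\Phi^\ep=u^\ep$ holds outside $Q_{B_0R,\rho/2}$ by construction; on the compact annulus $Q_{B_0R,\rho/2}\setminus Q_{B_0R,R}$, the strict maximum \eqref{u-etastrictmax} gives $u^\ep\le u^++\gamma_R\le \eta-\gamma_R$ for $\ep$ small, while the previous step gives $h^\ep\ge \eta-o_\ep(1)$, hence $\Phi^\ep=h^\ep+\ep\delta L_1/\alpha-\ep\lfloor\gamma_R/\ep\rfloor\ge u^\ep$ for $\ep$ small.

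For \eqref{mainlem2}: since $\Phi^\ep$ equals the smooth function $h^\ep+C_{\mathrm{const}}$ throughout the open set $Q_{B_0R,\rho/2}\supset Q_{B_0R,R}$, the viscosity supersolution property reduces to a pointwise inequality at each $(t,x)\in Q_{B_0R,R}$. Fix such $(t,x)$ and let $\z^{i_0}(t)$ be the closest moving particle to $x$. Following the derivation of \eqref{lastcal} in Lemma~\ref{hepsupersolution} verbatim, substituting the actual $\dot\z^i(t)$ for the prescribed $-c_0b_iL$ of \eqref{levelsetode}, using \eqref{psi} with $L=L_0+L_1$, and using the algebraic identity
\beqs
-\tfrac{L_0+L_1}{\alpha}\bigl(W''(\tilde\phi(z_{i_0},1))-W''(0)\bigr)+W''(\tilde\phi(z_{i_0},1))\tfrac{L_1}{\alpha}=L_1-\tfrac{L_0}{\alpha}\bigl(W''(\tilde\phi(z_{i_0},1))-W''(0)\bigr),
\eeqs
the result reorganizes as
\beqs
\Lambda(t,x)=\bigl(-\dot\z^{i_0}(t)-c_0(L_0+L_1)\bigr)\phi'(z_{i_0},1)+L_1+\bigl(W''(\tilde\phi(z_{i_0},1))-W''(0)\bigr)\Bigl[\tfrac{1}{\delta}\sum_{i\neq i_0}\tilde\phi(z_i,b_i)-\tfrac{L_0}{\alpha}\Bigr]+o_\ep(1).
\eeqs
The first summand is nonnegative by \eqref{dotzetaiimport}. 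The bracketed quantity is $o_\ep(1)+O(R)$ by combining Lemma~\ref{phiinfinitylem} (the asymptotic $\tilde\phi(z,b)\sim -b/(\alpha\pi z)$), Proposition~\ref{apprIcorall} (yielding $\sum_{i\neq i_0}\tilde\phi(z_i,b_i)/\delta=\I[\tilde\eta(t,\cdot)](x)/\alpha+o_\ep(1)+O(\gamma)$), continuity of $\I[\eta]$ (giving $\I[\tilde\eta](x)=L_0+O(R)$), and the observation that the $O(\gamma)$ error multiplied by the prefactor $W''(\tilde\phi(z_{i_0},1))-W''(0)=O(\delta/\gamma)$ (from \eqref{phiinfinity}) amounts to $O(\delta)=o_\ep(1)$. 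Therefore $\Lambda(t,x)\ge L_1+o_\ep(1)+O(R)\ge 0$ for $R$ first chosen small enough, then $\ep$ subsequently small.

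The principal obstacle is the bookkeeping in this last computation: tracking the hybrid of moving ($\z^i(t)$) and frozen ($x_i^0$) level sets in the discrete-to-continuum approximation of $\I$, and handling the short-range contribution near $z_{i_0}$ via Lemma~\ref{lemmaerrorshortdistance}. A minor technicality is that $\Phi^\ep$ may be discontinuous on $\partial Q_{B_0R,\rho/2}$, but this causes no issue: by \eqref{mainlem1} the interior value dominates the exterior, so $\Phi^\ep$ is LSC, and the viscosity supersolution test inequality involves $\I[\varphi]$ rather than $\I[\Phi^\ep]$.
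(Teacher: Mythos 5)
Your overall architecture matches the paper's: use the zeroth-order approximation $h^\ep\approx\eta$ together with the strict-maximum gap \eqref{u-etastrictmax} to get \eqref{mainlem1} and \eqref{mainlem3}, then run a pointwise supersolution computation modeled on Lemma~\ref{hepsupersolution}, with the errors absorbed into $L_1$. For \eqref{mainlem2} the reorganization you display is correct (the algebraic identity using $L=L_0+L_1$ in \eqref{psi} and the constant $\ep\delta L_1/\alpha$ in \eqref{Phiep} checks out, since $\alpha=W''(0)$), and your handling of the $O(\gamma)$ error via the decaying prefactor $W''(\tilde\phi(z_{i_0},1))-W''(0)=O(\delta/\gamma)$ is exactly the mechanism the paper uses.

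There is, however, one step that does not go through as written. For \eqref{mainlem3} you propose to apply Proposition~\ref{approxpropfinal} to a hybrid function $\tilde\eta(t,\cdot)$ that equals $\eta(t,\cdot)$ in the inner window and $\eta(t_0,\cdot)$ outside, with level sets $\{\z^i(t)\}\cup\{x_i^0\}$. But for $t\neq t_0$ these two pieces differ by an amount of order $\|\partial_t\eta\|_\infty|t-t_0|=O(R)$, which is \emph{not} $o_\ep(1)$, so the hybrid has an $O(R)$ jump at the seam and is not even continuous, let alone in $C^{1,1}(\R)$ as \eqref{vass} requires; Proposition~\ref{approxpropfinal} cannot be applied to it directly. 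The paper avoids this by never constructing a single auxiliary function: it splits $h^\ep$ into the three partial sums (inner moving particles $\z^i(t)$, left frozen particles $x^0_i$ with $i\le M_\rho-1$, right frozen particles with $i\ge N_\rho+1$) and applies the partial-sum approximation lemmas \ref{vapproxphisteps} and \ref{vapprowhatremains} to each piece with the appropriate underlying $C^{1,1}$ function ($\eta(t,\cdot)$ for the moving block, $\eta(t_0,\cdot)$ for the frozen blocks), obtaining $J_0\ep$, $\eta(t,x)-J_0\ep$, and $0$ respectively, up to $o_\ep(1)+C\ep^2\delta N_\ep/R$. That is the content of Lemma~\ref{spproxetalem1}; a worse $O(R)$ estimate (Lemma~\ref{spproxetalem2}) for $|x-x_0|\ge\rho-R$ is also needed to pass from $\I[h^\ep]$ to $\I[\Phi^\ep]$ (Corollary~\ref{coroIphiep}), which you gesture at but do not spell out. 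Once you replace the hybrid-function step with this piecewise application of Lemmas~\ref{vapproxphisteps}--\ref{vapprowhatremains}, the rest of your argument goes through and coincides with the paper's.
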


We are now in  position to conclude the proof for Case 2. By \eqref{mainlem1} and \eqref{mainlem2} and the comparison principle, Proposition \ref{comparisonbounded}, we have
$$u^\ep(t,x)\leq \Phi^\ep(t,x)\quad \text{for all }(t,x)\in Q_{B_0R,R}(t_0,x_0).$$
Passing to the  upper limit as $\ep\to0$ and using \eqref{mainlem3}  and  that $u^+(t_0,x_0)=\eta(t_0,x_0)$, we obtain
$$0\leq-\gamma_R,$$
which is a contradiction. This completes the proof of \eqref{goal}.

\noindent \textbf{Proof of Lemma \ref{supersolutiponlemma}.}
We divide the proof of Lemma \ref{supersolutiponlemma} in several steps. To prove \eqref{mainlem1} and \eqref{mainlem3}, we will need   the following lemma
whose proof is postponed   to Section \ref{lemmatasec}.

\begin{lem}\label{spproxetalem1} There exists $\ep_0=\ep_0(R,\rho)>0$ such that for any $\ep<\ep_0$ and for any $(t,x)\in Q_{B_0R, \rho-R}(t_0,x_0)$, we have
$$|h^\ep(t,x)-\eta(t,x)|\leq  o_\ep(1).$$ 
\end{lem}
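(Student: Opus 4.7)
The plan is to decompose $h^\ep(t,x)$ into three pieces---the middle sum over $\z^i(t)$, the two side sums over the fixed $x_i^0$, and the $\psi$-correction---and to control each piece using the approximation machinery of Section 4, invoking Lemma \ref{vapproxphisteps} for the middle sum (applied to the function $\eta(t,\cdot)$ itself) and Lemma \ref{vapprowhatremains} for the side sums (applied to $\eta(t_0,\cdot)$).

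For the middle sum, the key observation is that by \eqref{etaxposirho}, for $R$ small enough that $\rho+3R<2\rho$, the function $\eta(t,\cdot)$ is strictly increasing on $(x_0-(\rho+3R),x_0+(\rho+3R))$ throughout $|t-t_0|\leq B_0R$; consequently the level set points of $\eta(t,\cdot)$ in this interval are precisely $\z^0(t),\ldots,\z^{K_\rho}(t)$, all with orientation $+1$ and consecutive in the global level set enumeration of $\eta(t,\cdot)$. Since $\eta(t,-\infty)=v_2(-\infty)=0$ by \eqref{etaasympt} and $\ep^2\tilde N_\ep(t)\delta=o_\ep(1)$ by \eqref{Nepbouneta}, Lemma \ref{vapproxphisteps} applied to $\eta(t,\cdot)$ with the endpoint indices of $\z^0(t)$ and $\z^{K_\rho}(t)$ yields, using $\eta(t,\z^0(t))=J_0\ep$ from \eqref{mainzetaieq} and the fact that $R$ is an $\ep$-independent constant,
\begin{equation*}
\sum_{i=0}^{K_\rho}\ep\phi\Big(\frac{x-\z^i(t)}{\ep\delta},1\Big)=\eta(t,x)-J_0\ep+o_\ep(1),
\end{equation*}
valid for $x\in(\z^0(t)+R,\z^{K_\rho}(t)-R)$, which by \eqref{x_mcontrolenq}--\eqref{x_ncontrolenq} contains the spatial slice of $Q_{B_0R,\rho-R}(t_0,x_0)$.

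For the two side sums I would invoke Lemma \ref{vapprowhatremains} applied to $\eta(t_0,\cdot)$. By \eqref{xm} and \eqref{xn}, for $x\in(x_0-(\rho-R),x_0+(\rho-R))$ one has $x>x_{M_\rho-1}^0+R$ and $x<x_{N_\rho+1}^0-R$, so the lemma gives the left sum $=\eta(t_0,x_{M_\rho-1}^0)-\eta(t_0,x_1^0)+o_\ep(1)$ and the right sum $=o_\ep(1)$. Using \eqref{vxidef} together with $b_{M_\rho}=1$ (from \eqref{etaxposirho}) gives $\eta(t_0,x_{M_\rho-1}^0)=(J_0-1)\ep$, while $\eta(t_0,x_1^0)=b_1\ep=O(\ep)$; all $O(\ep)$ discrepancies are absorbed into $o_\ep(1)$, so the two side sums combined equal $J_0\ep+o_\ep(1)$. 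The $\psi$-correction is handled by the crude bound $\ep\delta(K_\rho+1)\|\psi\|_\infty\leq C\rho\delta=o_\ep(1)$, using $K_\rho\leq C\rho/\ep$ from \eqref{proplippart1}. Assembling the three pieces,
\begin{equation*}
h^\ep(t,x)=\bigl[\eta(t,x)-J_0\ep\bigr]+\bigl[J_0\ep\bigr]+o_\ep(1)=\eta(t,x)+o_\ep(1).
\end{equation*}

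The main obstacle is verifying that the $\z^i(t)$ are precisely the consecutive global level set points of $\eta(t,\cdot)$ in the middle interval \emph{uniformly in} $|t-t_0|\leq B_0R$, which is where the strict monotonicity \eqref{etaxposirho} and the smallness of the time window (already encoded in the choice of $B_0$) become essential. A secondary technical point is keeping $R$ as a fixed constant (independent of $\ep$) throughout, so that error terms of the form $\ep^2\delta\tilde N_\ep(t)/R$ coming out of Lemma \ref{vapproxphisteps} genuinely go to zero; this is compatible with all other constraints on $R$ imposed in Lemma \ref{supersolutiponlemma}.
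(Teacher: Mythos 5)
Your proof is essentially the same as the paper's: both decompose $h^\ep$ into the middle sum over the moving particles $\z^i(t)$, the two frozen side sums, and the $\psi$-correction; control the middle sum via Lemma \ref{vapproxphisteps} applied to $\eta(t,\cdot)$ (noting the $\z^i(t)$ are its consecutive level set points, with $\eta(t,\z^0(t))=J_0\ep$ pinning the normalization); control the left and right side sums via \eqref{vapprowhatremainseq1} and \eqref{vapprowhatremainseq2} applied to $\eta(t_0,\cdot)$; and absorb the $\psi$-sum by a crude $C\delta$ bound (the content of Lemma \ref{psismall}). The bookkeeping of $v(x_M)$, $v(x_N)$ and the $O(\ep)$ endpoint shifts, and the range checks $x>x_{M_\rho-1}^0+R$, $x<x_{N_\rho+1}^0-R$, $x\in(\z^0(t)+R,\z^{K_\rho}(t)-R)$, all match the paper's Claims 1--3.
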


\medskip
\noindent{\em Proof of \eqref{mainlem1}}.
By definition  \eqref{Phiep} of $\Phi^\ep$, $ \Phi^\ep(t,x)=u^\ep(t,x)$ outside of $Q_{B_0R,\frac\rho2}(t_0,x_0)$. Next, by Lemma \ref{spproxetalem1} and \eqref{u-etastrictmax}, for $(t,x)\in Q_{B_0R,\frac\rho2}(t_0,x_0)\setminus Q_{B_0R,R}(t_0,x_0)$, 
\beqs\begin{split} \Phi^\ep(t,x)&=h^\ep(t,x) +\frac{\ep\delta L_1}{\alpha}-\ep\left\lfloor \frac{\gamma_R}{\ep}\right \rfloor\\
&\ge \eta(t,x)+ o_\ep(1)-\ep\left\lfloor \frac{\gamma_R}{\ep}
\right \rfloor\\
&
\ge u^\ep(t,x).
\end{split}\eeqs
 This concludes the proof of  \eqref{mainlem1}.

\medskip
\noindent{\em Proof of \eqref{mainlem3}}. By Lemma  \ref{spproxetalem1}, for $(t,x)\in  Q_{B_0R,R}(t_0,x_0)$
$$\Phi^\ep(t,x)=h^\ep(t,x)+\frac{\ep\delta L_1}{\alpha}-\ep\left\lfloor \frac{\gamma_R}{\ep}\right \rfloor
\leq \eta(t,x)+o_\ep(1)-\ep\left\lfloor \frac{\gamma_R}{\ep}\right \rfloor,$$
which gives \eqref{mainlem3}.

\medskip
For $|x-x_0|\geq \rho-R$ we obtain a worse approximation result than the one in Lemma \ref{spproxetalem1} as shown below.  This is due to the fact that we have choosen 
the particles $x_i$ to be constant in time, equal to $x_i^0$, for $i<M_\rho$ and $i>N_\rho$.   

\begin{lem}\label{spproxetalem2} There exists $\ep_0=\ep_0(R,\rho)>0$ such that for any $\ep<\ep_0$, 
 if $|t-t_0|<B_0R$,  and $|x-x_0|\geq \rho-R$, then 
 $$|h^\ep(t,x)-\eta(t,x)|\leq o_\ep(1)+O(R).$$
\end{lem}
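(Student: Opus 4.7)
The plan is to prove Lemma~\ref{spproxetalem2} by the triangle-inequality decomposition
\begin{equation*}
h^\ep(t,x)-\eta(t,x)=[h^\ep(t,x)-h^\ep(t_0,x)]+[h^\ep(t_0,x)-\eta(t_0,x)]+[\eta(t_0,x)-\eta(t,x)]
\end{equation*}
and to bound each bracket separately. The third bracket is immediate: smoothness of $\eta$ and $|t-t_0|<B_0R$ give $O(R)$. For the middle bracket, observe that by~\eqref{Xi} and the uniqueness in Lemma~\ref{partcilescontrollem} we have $\z^i(t_0)=x_{M_\rho+i}^0$, and by the monotonicity~\eqref{etaxposirho} the orientations in the bulk region are $b_{M_\rho+i}=1$. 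Thus $h^\ep(t_0,\cdot)$ exactly reassembles into the full ansatz $\sum_{i=1}^{N_\ep}\ep\phi((x-x_i^0)/(\ep\delta),b_i)$ for $\eta(t_0,\cdot)$, plus a $\psi$-correction of size $O((K_\rho+1)\ep\delta)=O(\rho\delta)=o_\ep(1)$ (using $K_\rho+1\le C\rho/\ep$ from~\eqref{proplippart1} and the boundedness of $\psi$). Since by~\eqref{etaasympt} the function $\eta(t_0,\cdot)$ satisfies~\eqref{vass} with $\eta(t_0,-\infty)=0$, Proposition~\ref{approxpropfinal} yields $h^\ep(t_0,x)-\eta(t_0,x)=o_\ep(1)$ uniformly in $x$.

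The core work is the first bracket. Cancellation of the fixed-particle sums leaves
\begin{equation*}
\sum_{i=0}^{K_\rho}\ep\left[\phi\!\left(\tfrac{x-\z^i(t)}{\ep\delta},1\right)-\phi\!\left(\tfrac{x-\z^i(t_0)}{\ep\delta},1\right)\right]+\text{(analogous $\psi$ sum)},
\end{equation*}
where the $\psi$ sum is again $o_\ep(1)$. Rewrite each $\phi$-summand as $-\tfrac{1}{\delta}\int_{\z^i(t_0)}^{\z^i(t)}\phi'((x-y)/(\ep\delta))\,dy$ and split the indices according to whether $|x-\z^i(t_0)|\ge 2R$ (\emph{far}) or $|x-\z^i(t_0)|<2R$ (\emph{near}). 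The key geometric input is $|\z^i(t)-\z^i(t_0)|\le R$, from~\eqref{xdotbound} together with $|t-t_0|<B_0R$. For the far indices the decay~\eqref{phi'infinity} bounds each term by $CK_1\ep^2\delta R/(x-\z^i(t_0))^2$; summing and invoking~\eqref{i/k^2sum} applied to the level-set points of $\eta(t_0,\cdot)$ produces an $O(\delta R)=o_\ep(1)$ contribution. For the near indices the trivial bound $C\ep$ per term combined with~\eqref{proplippart1} (at most $O(R/\ep)$ level-set points in an interval of length $4R$) yields the $O(R)$ contribution.

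The principal obstacle is precisely this near group: when $x$ lies close to a moving particle, no asymptotic decay of $\phi$ is available, forcing the cruder $O(\ep)$ bound per particle. This is exactly the source of the $O(R)$ loss of precision relative to the sharper Lemma~\ref{spproxetalem1}, where the evaluation point stays cleanly separated from the moving region by a distance of order $\rho$. Summing the three brackets gives $|h^\ep(t,x)-\eta(t,x)|\le o_\ep(1)+O(R)$, uniformly in $(t,x)$ with $|t-t_0|<B_0R$ and $|x-x_0|\ge\rho-R$.
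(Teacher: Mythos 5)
Your proof is correct, but the route differs meaningfully from the paper's. The paper's proof treats the three sub-sums of $h^\ep$ (the moving $\z^i$-sum, the left fixed block $1,\dots,M_\rho-1$, the right fixed block $N_\rho+1,\dots,N_\ep$) one at a time via Claims 1--6, invoking Lemmas~\ref{vapproxphisteps} and \ref{vapprowhatremains}, and it splits into two spatial regimes ($|x-x_0|>\rho+4R$ and $\rho-R\le|x-x_0|\le\rho+4R$), with the $O(R)$ loss coming from the temporal variation $|\eta(t_0,x)-\eta(t,x)|$ and from the roughly $R/\ep$ particles that sit in the transition shell near $|x-x_0|\approx\rho$. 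You instead pivot through the time $t_0$: the middle bracket $h^\ep(t_0,\cdot)-\eta(t_0,\cdot)$ is dispatched in one stroke by Proposition~\ref{approxpropfinal} (after the observation that at $t_0$ the $\phi$-pieces reassemble into the full ansatz for $\eta(t_0,\cdot)$, and the $\psi$-pieces are $O(\rho\delta)=o_\ep(1)$), the last bracket is trivially $O(R)$ by the regularity of $\eta$, and the first bracket localizes the entire temporal error to the displacement $|\z^i(t)-\z^i(t_0)|\le R$, handled with the fundamental-theorem rewriting, the decay \eqref{phi'infinity}, and the spacing bounds \eqref{proplippart1}/\eqref{i/k^2sum}. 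This is genuinely more economical: it avoids the regime split, reuses the globally uniform Proposition~\ref{approxpropfinal} rather than reproving its ingredients, and makes transparent why Lemma~\ref{spproxetalem1} is sharper (for $|x-x_0|<\rho-R$ the level sets track $\eta(t,\cdot)$ exactly, whereas away from the moving region the $t_0$-anchoring of the fixed particles necessarily introduces the $O(R)$ slack). Both arguments rest on the same underlying estimates, but yours organizes them more cleanly.
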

We postpone  the proof  of Lemma  \ref{spproxetalem2} to Section \ref{lemmatasec}.

\begin{cor} \label{coroIphiep}There exists $\ep_0=\ep_0(R,\rho)>0$ such that for any $\ep<\ep_0$, $R<\rho/4$, 
and any $(t,x)\in Q_{B_0R,R}(t_0,x_0)$, we have
\beq\label{eq2cori1uephfinal} \I[\Phi^\ep(t,\cdot)](x)\leq \I[h^\ep(t,\cdot)](x)+o_\ep(1)+\frac{o_R(1)}{\rho}.\eeq
\end{cor}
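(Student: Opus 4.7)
The plan is to exploit the fact that $\Phi^\ep-h^\ep$ is a \emph{constant} on a macroscopic ball around $(t_0,x_0)$, so that the only contribution to $\I[\Phi^\ep-h^\ep](x)$ comes from the exterior. By the definition \eqref{Phiep} of $\Phi^\ep$, for $|t-t_0|<B_0R$ one has
$$
(\Phi^\ep-h^\ep)(t,y)=
\begin{cases}
C_\ep:=\dfrac{\ep\delta L_1}{\alpha}-\ep\left\lfloor\dfrac{\gamma_R}{\ep}\right\rfloor & \text{if } |y-x_0|<\tfrac{\rho}{2},\\[4pt]
u^\ep(t,y)-h^\ep(t,y) & \text{if } |y-x_0|\ge\tfrac{\rho}{2}.
\end{cases}
$$
Since $(\Phi^\ep-h^\ep)(t,x)=C_\ep$ whenever $|x-x_0|<R<\rho/4$, the PV integrand of $\I[\Phi^\ep-h^\ep](x)$ vanishes on $|y-x_0|<\rho/2$, and constants have zero fractional Laplacian, so I would write
$$
\I[\Phi^\ep(t,\cdot)](x)-\I[h^\ep(t,\cdot)](x)=\frac{1}{\pi}\int_{|y-x_0|>\rho/2}\frac{u^\ep(t,y)-h^\ep(t,y)-C_\ep}{(y-x)^{2}}\,dy,
$$
with $|y-x|\ge \rho/4$ throughout the integration region.

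Next, I would establish a pointwise upper bound on the integrand. Since $u^+-\eta$ attains a strict global maximum at $(t_0,x_0)$ with value $0$, one has $u^+\le\eta$ on the whole space; the definition of $u^+=\limsup^*u^\ep$, together with a compactness argument and the asymptotic matching between $u^\ep$ and $\eta$ at $\pm\infty$ (provided by Lemma~\ref{asymptou-u+} and the choice \eqref{etaasympt}) yields $u^\ep(t,y)\le \eta(t,y)+o_\ep(1)$ uniformly in $y$. On the region $|y-x_0|\ge\rho/2>\rho-R$ (valid for $R<\rho/2$), Lemma~\ref{spproxetalem2} gives $|h^\ep(t,y)-\eta(t,y)|\le o_\ep(1)+O(R)$. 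Finally the definition of $C_\ep$ yields $-C_\ep=\gamma_R+o_\ep(1)$. Combining,
$$
u^\ep(t,y)-h^\ep(t,y)-C_\ep\le \gamma_R+o_\ep(1)+O(R).
$$
In the annular piece $\rho/2<|y-x_0|<2\rho$ I would sharpen this using the $-2\gamma_R$ gap from \eqref{u-etastrictmax} (which forces $u^\ep\le\eta-\gamma_R+o_\ep(1)$ there), so the integrand is $\le -\gamma_R+o_\ep(1)+O(R)$, hence nonpositive for small $\ep,R$ and contributing at most $0$ to the upper bound.

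Integrating the remaining positive tail ($|y-x_0|>2\rho$) against $1/(y-x)^2\le C/\rho^2$ over a set where the distance is $\ge \rho$ produces a factor $C/\rho$, and the result is
$$
\I[\Phi^\ep(t,\cdot)](x)-\I[h^\ep(t,\cdot)](x)\le \frac{C\bigl(\gamma_R+o_\ep(1)+O(R)\bigr)}{\rho}.
$$
The $O(R)/\rho$ and $\gamma_R/\rho$ terms are of the form $o_R(1)/\rho$ (the latter because upper semicontinuity of $u^+$ at the strict-max point forces $\gamma_R\to 0$ as $R\to 0$), while $o_\ep(1)/\rho$ is $o_\ep(1)$ for fixed $\rho$, giving exactly \eqref{eq2cori1uephfinal}.

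The main obstacle is the tail bound $u^\ep\le \eta+o_\ep(1)$ \emph{uniformly} in $y\in\R$, since compactness-based arguments only give this on bounded sets. I would handle this by invoking \eqref{etaasympt} (which makes $\eta$ coincide with $v_2$ and $w_2$ outside a fixed compact set) together with Lemma~\ref{asymptou-u+} (which bounds $u^+$ by the same $v_2,w_2$), so that outside a large interval the inequality $u^\ep\le u^++o_\ep(1)\le\eta+o_\ep(1)$ persists uniformly; this is the step that would require the most care in a full proof.
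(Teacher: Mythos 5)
Your proposal is correct in substance and follows the same route the paper (and the cited Corollary~5.7 of \cite{patsan}) takes: exploit linearity of $\I$ and the fact that $\Phi^\ep-h^\ep$ equals the constant $C_\ep$ on $\{|y-x_0|<\rho/2\}$, so that only the exterior integral survives, and then control $u^\ep-h^\ep-C_\ep$ there via Lemmas \ref{spproxetalem1}--\ref{spproxetalem2}, the strict-max gap \eqref{u-etastrictmax}, and a tail estimate. The decomposition into the annulus and the far tail is a harmless overcomplication: the same bound $u^\ep-h^\ep-C_\ep\le \gamma_R+o_\ep(1)+O(R)$ already holds on all of $\{|y-x_0|>\rho/2\}$, and integrating its positive part against $(y-x)^{-2}$ over $|y-x|\gtrsim\rho$ directly gives the claimed $o_\ep(1)+o_R(1)/\rho$, without needing to argue that the annular contribution is nonpositive (which, as you half-notice, does not obviously follow since both $\gamma_R$ and $O(R)$ vanish as $R\to 0$).

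The one point that needs to be stated more carefully is the uniform tail bound $u^\ep\le\eta+o_\ep(1)$ on $\R$. Lemma~\ref{asymptou-u+} bounds $u^+$, not $u^\ep$, and by itself gives no uniform-in-$\ep$ control. What does work — and is in fact what the proof of Lemma~\ref{asymptou-u+} actually uses — is to invoke Lemmas~\ref{supermonotincr} and~\ref{supermonotincr2} together with the comparison principle (Proposition~\ref{comparisonuep}): these yield $u^\ep(t,y)\le v_2(y+c_0Lt)+o_\ep(1)$ and $u^\ep(t,y)\le w_2(y-c_0Lt)+o_\ep(1)$ with $o_\ep(1)$ uniform in $y$. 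Combined with \eqref{etaasympt} this handles $|y|>K$, while on the remaining compact set the $\limsup^*$ compactness argument applies; so the idea is right, but the reference you cite is one lemma too far downstream.
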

\begin{proof}
The corollary is a consequence of Lemma \ref{spproxetalem1}, Lemma \ref{spproxetalem2} and the definition  \eqref{Phiep} of $\Phi^\ep$.
For details, we refer to the proof of Corollary  5.7 in \cite{patsan}.
\end{proof}

Now, we are ready to prove \eqref{mainlem2}.

\noindent{\em Proof of \eqref{mainlem2}}.
Denote 
\beqs \Lambda := \delta\partial_t \Phi^\ep-\I [\Phi^\ep]+\frac{1}{\delta}W'\left(\frac{\Phi_\ep}{\ep}\right).
\eeqs
We want to show that $\Lambda(t,x) \geq 0$ for all $(t,x) \in Q_{B_0R,R}(t_0,x_0).$  
Fix $(\ts,\xs)\in Q_{B_0R,R}(t_0,x_0)$. Let $i_0$ be such that $\z^{i_0}(\ts)$ is the closest point to $\xs$. Then, $\xs = \z^{i_0}(\ts) + \ep \gamma$, with $|\gamma| \leq 2/|\partial_x\eta(t_0,x_0)|$ by \eqref{proplippart2} and \eqref{etaxposirho}. Define 
\beqs
z_i (t):= \dfrac{x-\z^i(t)}{\ep \delta}, \quad \quad z_i^0 := \dfrac{x-x^0_i}{\ep \delta} \quad \text{and} \quad \tilde{\phi}(z,b_i) := \phi(z,b_i) - H(z,b_i),
\eeqs with $H(z,b)$ defined as in \eqref{hbi}. 
Using Corollary \ref{coroIphiep}, equations \eqref{phi} and \eqref{psi}, performing Taylor expansions,  as in the proof of Lemma 5.3 in \cite{patsan}, we obtain 
\beq \begin{split}\label{Lambdafinal}
\Lambda(\ts,\xs) &\ge (W''(\phi(z_{i_0}),1) - W''(0))\left ( \frac{1}{\delta}\sum_{\stackrel{i=0}{i\not=i_0}}^{K_\rho} \tilde{\phi}(z_i,1) +  \frac{1}{\delta}\sum_{i=1}^{M_\rho-1}\tilde{\phi}(z_i^0,b_i) +  \frac{1}{\delta}\sum_{i=N_\rho+1}^{N_\ep} \tilde{\phi}(z_i^0,b_i) - \frac{L_0}{\alpha} \right )\\
&+L_1 + E_0 + E_1 + E_2 + E_3 + E_4, 
\end{split} \eeq
where
\beqs
\begin{split}
    E_0 &= o_\ep(1) + \dfrac{o_R(1)}{\rho}\\
    E_1 &= -\sum_{\stackrel{i=0}{i\not=i_0}}^{K_\rho} \dot{\z}^i(\ts)\phi'(z_i,1) - \delta \sum_{\stackrel{i=0}{i\not=i_0}}^{K_\rho}\dot{\z}^i(\ts) \psi'(z_i,1) -\delta\dot{\z}^{i_0}(\ts)\psi'(z_{i_0},1)\\
    E_2 &= \frac{1}{\delta}O\left (\sum_{\stackrel{i=0}{i\not=i_0}}^{K_\rho}[ \tilde{\phi}(z_i,1) + \delta \psi(z_i,1)]+ \delta \psi(z_{i_0},1) + \sum_{i=1}^{M_\rho-1}\tilde{\phi}(z_i^0,b_i) + \sum_{i=N_\rho+1}^{N_\ep} \tilde{\phi}(z_i^0,b_i) + \frac{\delta L_1}{\alpha} \right )^2\\
    E_3 &= \frac{1}{\delta}\sum_{\stackrel{i=0}{i\not=i_0}}^{K_\rho} O(\tilde{\phi}(z_i,1))^2 + \frac{1}{\delta}\sum_{i=1}^{M_\rho-1} O(\tilde{\phi}(z_i^0,b_i))^2 + \frac{1}{\delta}\sum_{i=N_\rho+1}^{N_\ep} O(\tilde{\phi}(z_i^0,b_i))^2\\
    E_4 &= W''(\tilde{\phi}(z_{i_0},1)) \sum_{\stackrel{i=0}{i\not=i_0}}^{K_\rho} \psi(z_i,1) -\sum_{\stackrel{i=0}{i\not=i_0}}^{K_\rho} \I[\psi](z_i,1).
\end{split}
\eeqs

We have estimates for the error terms $E_1, E_2, E_3$ and $E_4$ as stated in the following lemma.
\begin{lem} \label{errorcontrol} For $i\ge 1$, the error $E_i$ defined as above satisfies 
\beqs 
E_i= O(\delta).
\eeqs
\end{lem}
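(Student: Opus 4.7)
The proof adapts \cite[Lemma 5.8]{patsan} to the mixed-orientation setting. The essential ingredients are the asymptotic decay estimates of Lemmas \ref{phiinfinitylem}--\ref{psiinfinitylem}, the particle-spacing bounds \eqref{proplippart1}--\eqref{proplippart2}, and the discrete approximation formula of Proposition \ref{apprIcor}. Throughout, I fix $(\ts,\xs)\in Q_{B_0R,R}(t_0,x_0)$ and work with $z_i=(\xs-\zeta^i(\ts))/(\ep\delta)$ and $z_i^0=(\xs-x_i^0)/(\ep\delta)$.

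The first step is to derive a uniform set of sum estimates. Inside the strip $\{|x-x_0|<\rho+3R\}$ the function $\eta(t,\cdot)$ is strictly increasing with slope bounded below by $\partial_x\eta(t_0,x_0)/2$ (from \eqref{etaxposirho}) and above by $\|\partial_x\eta\|_\infty$, so the level-set points $\zeta^i(t)$ are spaced by an amount $\sim\ep$; equivalently $z_{i+1}-z_i\sim 1/\delta$. Plugging the pointwise decay bounds $\phi'(z),|\psi'(z)|=O(1/(1+z^2))$ and $|\tilde\phi(z)|,|\psi(z)|=O(1/(1+|z|))$ into this geometric picture yields $\sum_{i\neq i_0}\phi'(z_i)=O(\delta^2)$ and $\sum_{i\neq i_0}\tilde\phi(z_i)^2=O(\delta^2)$, and analogously for $\psi'$ and $\psi^2$. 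For the signed sums $\sum\tilde\phi(z_i,b_i)$ and $\sum\psi(z_i,b_i)$, I rewrite each via \eqref{phiinfinity} and \eqref{psiinfinity} as $-\frac{\delta}{\alpha\pi}\sum\frac{\ep b_i}{\xs-\zeta^i}$ (respectively analogous for $\psi$) plus lower-order corrections, and invoke Proposition \ref{apprIcor} together with Lemma \ref{sumxialwaysfiniteprop} to conclude that both are $O(\delta)$.

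With these estimates in hand, $E_1=O(\delta)$ follows immediately, the leading contribution being $\delta\dot\zeta^{i_0}(\ts)\psi'(z_{i_0},1)$, and $E_3=O(\delta)$ comes after dividing the $O(\delta^2)$ squared-sum bound by $\delta$; the contribution to $E_3$ from the boundary particles $x_i^0$ is even smaller because $|z_i^0|\geq(\rho-R)/(\ep\delta)$ is very large, giving each term a prefactor $(\ep\delta)^2/\rho^2$. For $E_2$, it suffices to show that the quantity $A$ being squared is itself $O(\delta)$, so that $A^2/\delta=O(\delta)$; the only genuinely new difficulty here is that the boundary sums $\sum_{i\leq M_\rho-1}$ and $\sum_{i\geq N_\rho+1}$ contain particles of both orientations, and they must be controlled by Proposition \ref{apprIcor} applied in its mixed-sign form to $\eta(\ts,\cdot)$. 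For $E_4$, I substitute the equation \eqref{psi} into each $\I[\psi](z_i,1)$ and use the periodicity identity $W''(\tilde\phi)=W''(\phi)$ to rewrite $E_4$ as three pieces involving $\sum_{i\neq i_0}\phi'(z_i,1)$, $\sum_{i\neq i_0}[W''(\phi(z_i))-W''(0)]$, and $\sum_{i\neq i_0}[W''(\phi(z_{i_0}))-W''(\phi(z_i))]\psi(z_i,1)$. The first is $O(\delta^2)$; the second reduces, by Taylor expansion around $H(z_i)\in\{0,1\}$ and the periodicity $W''(0)=W''(1)$, to $W'''(0)\sum\tilde\phi(z_i,1)+{}$lower order ${}=O(\delta)$; the third is $O(\delta)$ by combining the uniform bound $|W''(\phi(z_{i_0}))-W''(\phi(z_i))|\leq C$ with $\sum|\psi(z_i,1)|=O(\delta)$.

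The conceptually new step relative to \cite{patsan} is the treatment of the mixed-sign boundary sums in $E_2$, where Proposition \ref{apprIcor} must be used in full generality rather than in its monotone specialization; the remainder of the argument is careful bookkeeping in powers of $\delta$, driven by the same asymptotic decay of $\phi$ and $\psi$ that appeared in the monotone case.
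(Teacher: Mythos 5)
The paper's own ``proof'' of this lemma is a one-line deferral to \cite[Lemma~5.9]{patsan}, so you are reconstructing the argument rather than comparing against an explicit proof. Your reconstruction captures the correct overall skeleton: geometric spacing $|z_i|\sim|i-i_0|/\delta$ inside the strip, the decay estimates \eqref{phiinfinity}--\eqref{psi'infinity}, the squared-sum bounds $\sum\phi'(z_i),\sum\tilde\phi(z_i)^2 = O(\delta^2)$, the use of Proposition~\ref{apprIcor} and Lemma~\ref{sumxialwaysfiniteprop} to turn the signed sums $\sum\tilde\phi(z_i,b_i)$ and $\sum\psi(z_i,b_i)$ into $O(\delta)$ quantities, and the remark that the boundary sums carry mixed orientation and must be handled with the general version of the $\I$-approximation. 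Your treatment of $E_1,E_2,E_3$ is essentially sound.

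However, your treatment of $E_4$ has two concrete flaws. First, you invoke a Taylor expansion ``$W''(\phi(z_i)) - W''(0) = W'''(0)\tilde\phi(z_i) + \text{lower order}$,'' but the hypothesis \eqref{Wass} only gives $W\in C^{2,\beta}$ with $0<\beta<1$, so $W'''$ need not exist; all that is available is the H\"older bound $|W''(\phi(z_i))-W''(0)|\le C|\tilde\phi(z_i)|^\beta$, which changes the bookkeeping (summing $(\delta/k)^\beta$ over $k=1,\dots,K_\rho$ does \emph{not} converge uniformly in $K_\rho$ the way a sum of $(\delta/k)$ would). Second, you claim $\sum_{i\neq i_0}|\psi(z_i,1)| = O(\delta)$, but by \eqref{psiinfinity} each term decays only like $|K_2|/|z_i|\sim |K_2|\delta/k$, so the \emph{unsigned} sum is of size $\delta\log K_\rho\sim\delta\log(\rho/\ep)$, which is not $O(\delta)$. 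The $O(\delta)$ bound holds only for the \emph{signed} quantity $\bigl|\sum_{i\neq i_0}\psi(z_i,1)\bigr|$, by the same cancellation mechanism as for $\sum\tilde\phi$; to use this for the third piece of $E_4$ you must split $W''(\phi(z_{i_0}))-W''(\phi(z_i)) = [W''(\phi(z_{i_0}))-W''(0)] - [W''(\phi(z_i))-W''(0)]$, pair the constant with the signed sum $\sum\psi(z_i,1)$, and pair the decaying factor $[W''(\phi(z_i))-W''(0)] = O(|\tilde\phi(z_i)|^\beta)$ with the pointwise decay of $\psi(z_i)$ so that each product is summable. Pulling out the crude bound $|W''(\phi(z_{i_0}))-W''(\phi(z_i))|\le C$ loses exactly the decay that makes the sum converge.
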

\begin{proof}
The proof follows directly the proof of Lemma 5.9 in \cite{patsan}.
\end{proof}
Furthermore, we claim the following.
\begin{lem} \label{estclaims} 
\beq\label{estclaims1}
(W''(\phi(z_{i_0},1)) - W''(0))\left ( \frac{1}{\delta}\sum_{\stackrel{i=0}{i\not=i_0}}^{K_\rho} \tilde{\phi}(z_i,1) - \frac{ 1}{\alpha}\I^{1,\rho}[\eta(t_0,\cdot)](x_0)\right ) = o_\ep(1)+ o_R(1) + o_\rho(1) + O\left ( \frac{R}{\rho} \right ) ,
\eeq
and
\beq\label{estclaims2}
\frac{1}{\delta}\sum_{i=1}^{M_\rho-1}\tilde{\phi}(z_i^0,b_i) +  \frac{1}{\delta}\sum_{i=N_\rho+1}^{N_\ep} \tilde{\phi}(z_i^0,b_i) 
-   \frac{ 1}{\alpha}\I^{2,\rho}[\eta(t_0,\cdot)](x_0) =  o_\ep(1)  + o_\rho(1) + O\left ( \frac{R}{\rho} \right ) .
\eeq
\end{lem}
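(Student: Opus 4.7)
The plan is to prove both estimates by a two-step procedure: (i) replace each value of $\tilde\phi$ by its leading $1/z$ asymptotic from \eqref{phiinfinity}, thereby reducing each sum to a discrete Riesz-type kernel, and (ii) invoke the short-range Lemma~\ref{approxIshortlem} or the long-range Lemma~\ref{approxIlonglem} at the base point $(t_0,x_0)$ to identify the resulting discrete sum with $\frac{1}{\alpha}\I^{1,\rho}[\eta(t_0,\cdot)](x_0)$ or $\frac{1}{\alpha}\I^{2,\rho}[\eta(t_0,\cdot)](x_0)$. Since $W''(\phi(z_{i_0},1))-W''(0)$ is uniformly bounded it suffices to control each bracketed difference by $o_\ep(1)+o_R(1)+o_\rho(1)+O(R/\rho)$.

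For \eqref{estclaims1}: since $\z^{i_0}(t)$ is the closest particle to $x$ and consecutive $\z^i(t)$ are separated by at least $\ep/L$ (by \eqref{proplippart1} applied to $\eta(t,\cdot)$), for $i\neq i_0$ one has $|z_i|\geq 1/(2L\delta)\to\infty$. Then \eqref{phiinfinity} gives
$$\frac{1}{\delta}\tilde\phi(z_i,1)=\frac{\ep}{\alpha\pi(\z^i(t)-x)}+O\!\left(\frac{\ep^2\delta}{(\z^i(t)-x)^2}\right),$$
and the quadratic remainder sums via \eqref{i/k^2sum} to $O(\delta)=o_\ep(1)$. Lemma~\ref{partcilescontrollem} ensures that $\{\z^i(t)\}_{i=0}^{K_\rho}$ are precisely the level-set points of $\eta(t,\cdot)$ in $(x_0-(\rho+3R),x_0+(\rho+3R))$, which contains $(x-\rho,x+\rho)$ with overhang at most $4R$. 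Applying Lemma~\ref{approxIshortlem} to $v=\eta(t,\cdot)$ at $\xs=x$ yields
$$\frac{1}{\alpha\pi}\sum_{i\neq i_0,\,|\z^i(t)-x|\leq\rho}\frac{\ep}{\z^i(t)-x}=\frac{1}{\alpha}\I^{1,\rho}[\eta(t,\cdot)](x)+\frac{\eta(t,x+\rho)+\eta(t,x-\rho)-2\eta(t,x)}{\alpha\pi\rho}+o_\ep(1),$$
where the boundary quotient is $O(\rho)=o_\rho(1)$ by $C^{1,1}$-regularity. The $O(R/\ep)$ overhang particles in $\{\rho<|\z^i(t)-x|\leq\rho+4R\}$ each contribute $O(\ep/\rho)$, totalling $O(R/\rho)$. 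Finally, continuity of $(t,x)\mapsto\I^{1,\rho}[\eta(t,\cdot)](x)$ on $Q_{2\rho,2\rho}(t_0,x_0)$ permits the replacement of $(t,x)$ by $(t_0,x_0)$ at cost $o_R(1)$.

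For \eqref{estclaims2} the argument mirrors the preceding one. For $i\leq M_\rho-1$ or $i\geq N_\rho+1$, the definitions \eqref{xm}--\eqref{xn} together with $R<\rho$ imply $|x_i^0-x|\geq\rho$, hence $|z_i^0|\geq\rho/(\ep\delta)\to\infty$; thus \eqref{phiinfinity} and \eqref{i/k^2sum} reduce $S_2$ to
$$\frac{1}{\alpha\pi}\sum_{|x_i^0-x_0|>\rho+R}\frac{b_i\ep}{x_i^0-x}+o_\ep(1).$$
Replacing $x$ by $x_0$ costs $O(R/\rho)$ (because $|x_i^0-x_0|\geq\rho+R$), and Lemma~\ref{approxIlonglem} applied at $(t_0,x_0)$ to $\eta(t_0,\cdot)$ gives
$$\frac{1}{\pi}\sum_{|x_i^0-x_0|>\rho}\frac{b_i\ep}{x_i^0-x_0}=\I^{2,\rho}[\eta(t_0,\cdot)](x_0)-\frac{\eta(t_0,x_0+\rho)+\eta(t_0,x_0-\rho)-2\eta(t_0,x_0)}{\pi\rho}+o_\ep(1).$$
The missing range $\rho<|x_i^0-x_0|\leq\rho+R$ contributes $O(R/\rho)$ and the boundary quotient is again $o_\rho(1)$.

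The main obstacle is that one cannot shift $(t,x)\to(t_0,x_0)$ directly inside the discrete sums, because the terms nearest $\z^{i_0}$ are too sensitive to such a perturbation; the shift must instead be performed at the level of the smooth functional $\I^{1,\rho}[\eta(\cdot,\cdot)]$ after the discrete-to-continuous identification has been made. A second delicate point is the careful bookkeeping of the four error scales $o_\ep(1)$, $o_R(1)$, $o_\rho(1)$, $O(R/\rho)$; we note in particular that the boundary quotients appearing in \eqref{estclaims1} and \eqref{estclaims2} carry opposite signs and will cancel once the two estimates are combined in the continuation of the proof of \eqref{mainlem2}.
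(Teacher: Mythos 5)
Your proposal is correct and follows essentially the same route as the paper: reduce each $\tilde\phi$ to its $1/z$ tail via \eqref{phiinfinity}, control the quadratic remainders by \eqref{i/k^2sum}, invoke Lemmas \ref{approxIshortlem} and \ref{approxIlonglem} at the evaluation point, account for the $O(R/\ep)$ particles near the boundary of the annulus at cost $O(R/\rho)$, and shift the base point from $(t,x)$ to $(t_0,x_0)$. The paper's proof of \eqref{estclaims1} defers to \cite{patsan} and the given proof of \eqref{estclaims2} uses exactly the decomposition \eqref{628} you describe; your observation about the sign cancellation of the two boundary difference quotients is a nice extra but unnecessary since each is already $o_\rho(1)$ by $C^{1,1}$-regularity.
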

\begin{proof}
By the monotonicity of $\eta$ in $Q_{B_0R,R}(t_0,x_0)$, the proof of \eqref{estclaims1} directly follows the proof of Lemma 5.8 in \cite{patsan}. 
With a slight modification of the proof of (5.32) in Lemma 5.8 in \cite{patsan}, using \eqref{i/k^2sum} and Lemma \ref{approxIlonglem} presented in this paper, we now show the estimate \eqref{estclaims2}. By  \eqref{phiinfinity} and \eqref{i/k^2sum}, we have that
\beq \begin{split} \label{627}
\frac{1}{\delta}\sum_{i=1}^{M_\rho-1}\tilde{\phi}(z_i^0,b_i) +  \frac{1}{\delta}\sum_{i=N_\rho+1}^{N_\ep} \tilde{\phi}(z_i^0,b_i) 
&\leq \frac{1}{\alpha \pi} \sum_{i=1}^{M_\rho-1} \frac{\ep b_i }{x_i^0-x} + K_1\sum_{i=1}^{M_\rho-1} \frac{ \ep^2 \delta}{(x_i^0-x)^2}\\
&+\frac{1}{\alpha \pi} \sum_{i=N_\rho+1}^{N_\ep} \frac{\ep b_i }{x_i^0-x} + K_1\sum_{i=N_\rho+1}^{N_\ep} \frac{ \ep^2 \delta}{(x_i^0-x)^2}\\
&\leq \frac{1}{\alpha \pi} \sum_{i=1}^{M_\rho-1} \frac{\ep b_i }{x_i^0-x} + \frac{1}{\alpha \pi} \sum_{i=N_\rho+1}^{N_\ep} \frac{\ep b_i }{x_i^0-x} + O(\delta).
\end{split} \eeq
Since $|x-x_0|<R$ and $|x_i^0-x_0|> \rho+R$ for $i\le M_\rho-1$ and $i\geq N_\rho+1$, we have that for those indices  $|x-x_i^0| \geq\rho$. However, there may be particles 
$x_i^0$ with $i=M_\rho,\ldots, N_\rho$ for which  $|x-x_i^0| \geq\rho$. Therefore, we can write
\beq \label{628}
\frac{1}{\alpha \pi} \sum_{i=1}^{M_\rho-1} \frac{\ep b_i }{x_i^0-x} + \frac{1}{\alpha \pi} \sum_{i=N_\rho+1}^{N_\ep} \frac{\ep b_i }{x_i^0-x} 
= \frac{1}{\alpha \pi} \sum_{\stackrel{i=M_\ep}{|x-x_i^0| \geq \rho}}^{N_\ep} \frac{\ep b_i }{x_i^0-x} - \frac{1}{\alpha \pi} \sum_{\stackrel{i=M_\rho}{|x-x_i^0| \geq\rho}}^{N_\rho} \frac{\ep b_i }{x_i^0-x}.
\eeq
Notice that, by \eqref{proplippart1}, \eqref{xm},  \eqref{xn}, and $|x-x_0|<R$, the number of particles $x_i^0$,  $i=M_\rho,\ldots,N_\rho$ such that $|x-x_i^0| \geq\rho$ is bounded by
$CR/\ep$. Therefore, 
\beqs
\left |\sum_{\stackrel{i=M_\rho}{|x-x_i^0| \geq\rho}}^{N_\rho} \frac{\ep b_i }{x_i^0-x} \right |\leq \dfrac{CR}{\rho}.
\eeqs
By  Lemma \ref{approxIlonglem} and the estimate above, then \eqref{628} becomes 
\beq \label{term2} \begin{split}
&\frac{1}{\alpha \pi} \sum_{i=1}^{M_\rho-1} \frac{\ep b_i }{x_i^0-x} + \frac{1}{\alpha \pi} \sum_{i=N_\rho+1}^{N_\ep} \frac{\ep b_i }{x_i^0-x}=\I^{2,\rho}[\eta(t_0,\cdot)](x_0) + o_\ep(1) + o_\rho(1)+O\left(\frac{R}{\rho}\right).
\end{split} \eeq
Combining \eqref{627} and \eqref{term2} yields 
$$\frac{1}{\delta}\sum_{i=1}^{M_\rho-1}\tilde{\phi}(z_i^0,b_i) +  \frac{1}{\delta}\sum_{i=N_\rho+1}^{N_\ep} \tilde{\phi}(z_i^0,b_i) 
-   \frac{ 1}{\alpha}\I^{2,\rho}[\eta(t_0,\cdot)](x_0) \leq  o_\ep(1)  + o_\rho(1) + O\left ( \frac{R}{\rho} \right ).
$$
Similarly, one can prove the opposite inequality. This proves \eqref{estclaims2}.

\end{proof}
As a consequence of Lemma \ref{errorcontrol} and Lemma \ref{estclaims}, the inequality \eqref{Lambdafinal} becomes 
\beqs
\Lambda(\ts,\xs) \geq L_1 + o_\ep(1)+  o_R(1) + o_\rho(1) + \frac{o_R(1)}{\rho}.
\eeqs
We choose $R\ll \rho\ll1$ and $\ep_0$ so small that for any $\ep<\ep_0$,
$$ \left|o_\ep(1) +  o_R(1) + o_\rho(1) + \frac{o_R(1)}{\rho}\right|<\frac{L_1}{2}.$$Then, 
$$\Lambda(\ts,\xs) >\frac{L_1}{2}>0.$$
This completes the proof of \eqref{mainlem2}.

\section{Proof of \eqref{initialconditionlimit}}\label{Initialconditionsection}
To prove \eqref{initialconditionlimit} we are going to build supersolutions of \eqref{uepeq} for small times to compare to $u^\ep$.
Fix any point $x_0\in\R$. Since $u_0$ is a $C^{1,1}$  function, there exists a parabola $a(x-y_0)^2+b$ touching from above $u_0$ at $x_0$, for some $y_0,b\in\R$ and $a>0$.
Since $u_0$ is bounded, there exists a bounded smooth  function $g$ touching $u_0$ from above such that 
\beqs\begin{cases}
g\ge u_0,\quad g(x_0)=u_0(x_0)\\
g=a(x-y_0)^2+b\quad\text{in }(x_0-1,x_0+1)\\
g\text{ is non-increasing in }(-\infty,y_0)\\
g\text{ is non-decreasing in }(y_0,+\infty).
\end{cases}
\eeqs
 Finally, following the construction of Section  \ref{mainthmsubgrad0}, for $\sigma>0$ small enough it is easy to see that there exists a $C^{1,1}$ function $g_\sigma$ such that 
\beqs\begin{cases}
g_\sigma\ge g,\quad g_\sigma(x_0)\to g(x_0)\text{ as }\sigma\to0\\
g_\sigma \quad\text{is constant  in }(y_0-\sigma,y_0+\sigma)\\
g_\sigma\text{ is non-increasing in }(-\infty,y_0)\\
g_\sigma\text{ is non-decreasing in }(y_0,+\infty).
\end{cases}
\eeqs
Let $x_i^0$ and $b_i$, $i=1,\ldots,N_\ep$  be defined as in \eqref{xi} and  \eqref{bidef} for the function $g_\sigma$. 

Let $M_\ep:=\lceil g_\sigma(-\infty)/\ep\rceil$. Then, by Lemma \ref{hepsupersolution} 
there exists $L= C/\sigma^\frac12$ such that if $x_i(t)$ is the solution of the ODE system  \eqref{levelsetode} with $x_i(0)=x_i^0$, 
the function 
\beqs 
H^\ep(t,x) = \sum_{i = 1}^{N_\ep}\ep \phi\left (\dfrac{x-x_i(t)}{\ep \delta}, b_i\right ) + \sum_{i = 1}^{N_\ep} \ep \delta \psi\left (\dfrac{x-x_i(t)}{\ep \delta}, b_i\right ) +\ep M_\ep+
\ep\left\lceil \frac{o_\ep(1)}{\ep}\right\rceil+\dfrac{\ep \delta L}{\alpha}
\eeqs
is supersolution of \eqref{uepeq} in $(0,\sigma/(2c_0L)]\times \R$.
Since $u^\ep(0,x)=u_0(x)\le g_\sigma(x)$, by Proposition \ref{approxpropfinal} (recall Remark \ref{importantrem}) and the fact that 
$$ \sum_{i = 1}^{N_\ep} \ep \delta \psi\left (\dfrac{x-x_i(t)}{\ep \delta}, b_i\right ) =o_\ep(1),$$
we can choose $o_\ep(1)$ in the definition of $H^\ep$ such that $u^\ep(0,x)\le H^\ep(0,x)$. Then, by the comparison principle,
$$u^\ep(t,x)\le H^\ep(t,x)\quad\text{for all }(t,x)\in (0,\sigma/(2c_0L)]\times \R.$$
Consider any sequence $(t_\ep,x_\ep)$ converging to $(0,x_0)$ as $\ep \to 0$. As in the proof of Theorem \ref{mainthm}, we have 
\beqs
\begin{split}
    u^\ep(t_\ep,x_\ep) 
&\leq H^\ep(t_\ep,x_\ep)\\
&= \sum_{i = 1}^{N_\ep}\ep \phi\left (\dfrac{x_\ep-x_i(t)}{\ep \delta}, b_i\right )  +\ep M_\ep+o_\ep(1)\\
&= \sum_{i = 1}^{N_\ep} \ep \phi \left ( \dfrac{(x_\ep + b_i c_0Lt_\ep)-x_i^0}{\ep \delta} ,b_i \right )+\ep M_\ep + o_\ep(1)\\
&= \sum_{i = 1}^{N_\ep} \ep \phi \left ( \dfrac{(x_\ep +  c_0Lt_\ep)-x_i^0}{\ep \delta} ,b_i \right ) +\ep M_\ep+ o_\ep(1)\\
&= g^\sigma(x_\ep + c_0L t_\ep) + o_\ep(1).\\
\end{split}
\eeqs
Passing to the $\limsup^*$ we get 
$$u^+(0,x_0)\le  g^\sigma(x_0).$$
Finally, letting $\sigma\to0$ and using that $g^\sigma(x_0)\to g(x_0)=u_0(x_0)$ as $\sigma\to0$, we get 
$$u^+(0,x_0)\leq u_0(x_0)$$ as desired. 

\section{Asymptotic behavior of the limit function: proof of Proposition \ref{uasymptoticprop}} \label{additional}
 
 In this section, we investigate  the  asymptotic behavior  as $x\to \pm \infty$ of the limit function $\us$. 
 We first prove Lemma \ref{asymptou-u+} and then Proposition \ref{uasymptoticprop}.
 
The following result is proven in \cite[Section 6]{patsan}.
\begin{lem}\label{supermonotincr} 
Let $v_0$ be a $C^{1,1}$ non-decreasing function and  let $v^\ep$ be the solution of \eqref{uepeq} with $v^\ep(0,x)=v_0(x)$. Then, there exists $L>0$ independent of $\ep$ such that for all $(t,x)\in (0,+\infty)\times\R$, 
$$v_0(x-c_0Lt)+o_\ep(1)\leq v^\ep(t,x)\leq v_0(x+c_0Lt)+o_\ep(1).
$$
\end{lem}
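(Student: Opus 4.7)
\medskip
\noindent\textbf{Proof proposal.}
The plan is to build, for $L>0$ sufficiently large, global-in-space supersolutions and subsolutions of~\eqref{uepeq} obtained by rigidly translating the $\phi$-approximation of $v_0$, and then conclude by the comparison principle. Since $v_0$ is non-decreasing and $C^{1,1}$ with finite limits at $\pm\infty$ (otherwise we truncate; in the application $v_0$ is one of $v_1,v_2,w_1,w_2$, for which this is automatic), Proposition~\ref{approxpropfinal} together with Remark~\ref{importantrem} yields level-set points $x_i^0$ and coefficients $b_i$ defined by \eqref{xi}, \eqref{bidef} for $v_0$, and the key observation is that \emph{every} $b_i=1$ by monotonicity of $v_0$. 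Thus
\beqs
v_0(x)=\sum_{i=1}^{N_\ep}\ep\,\phi\!\left(\frac{x-x_i^0}{\ep\delta},1\right)+\ep M_\ep+o_\ep(1),
\eeqs
with $M_\ep=\lceil v_0(-\infty)/\ep\rceil$ and $o_\ep(1)$ uniform in $x$.

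For the upper bound I would then define, for $L>0$ to be chosen, the translated ansatz
\beqs
H^\ep(t,x):=\sum_{i=1}^{N_\ep}\ep\,\phi\!\left(\frac{x-x_i(t)}{\ep\delta},1\right)+\sum_{i=1}^{N_\ep}\ep\delta\,\psi\!\left(\frac{x-x_i(t)}{\ep\delta},1\right)+\ep M_\ep+\frac{\ep\delta L}{\alpha}+\ep\!\left\lceil\frac{o_\ep(1)}{\ep}\right\rceil,
\eeqs
with $x_i(t):=x_i^0-c_0L t$, i.e.\ the solution of \eqref{levelsetode} with $b_i=1$. The additive $o_\ep(1)$ constant can be chosen so that $H^\ep(0,x)\ge v_0(x)$ for all $x\in\R$. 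Then I would show $H^\ep$ is a supersolution of~\eqref{uepeq} on $(0,+\infty)\times\R$. This is a substantially simpler instance of Lemma~\ref{hepsupersolution}: because all $b_i=1$, no collisions ever occur and no flat region is needed. Following the computation \eqref{lastcal}, the use of equations \eqref{phi} and \eqref{psi} for $\phi$ and $\psi$ reduces $\Lambda(t,x)$ to
\beqs
\Lambda(t,x)\ge L+(W''(\tilde\phi(z_{i_0},1))-W''(0))\!\left(\frac{1}{\delta}\sum_{i\ne i_0}\tilde\phi(z_i,1)-\frac{L}{\alpha}\right)+E,
\eeqs
with $E=o_\ep(1)$ coming from the decay estimates \eqref{phiinfinity}, \eqref{psiinfinity}, \eqref{phi'infinity}, \eqref{psi'infinity} as in the argument for \eqref{errorssubsollem}. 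The bracket is controlled, using \eqref{phiinfinity} and \eqref{i/k^2sum}, by $\tfrac{1}{\alpha\pi}|\sum_{i\ne i_0}\ep b_i/(x-x_i(t))|+O(\delta)$, which by Lemma~\ref{sumxialwaysfiniteprop} is $\le C$ with $C$ depending only on the $C^{1,1}$-norm of $v_0$. Choosing $L$ larger than $(C\|W''\|_\infty/\alpha+1)$ then gives $\Lambda\ge 0$ for all $(t,x)$ and all small $\ep$.

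The comparison principle, Proposition~\ref{comparisonuep}, now yields $v^\ep(t,x)\le H^\ep(t,x)$ on $[0,+\infty)\times\R$. Applying Proposition~\ref{approxpropfinal} at a fixed $t$ to the translated function $x\mapsto v_0(x+c_0Lt)$, whose level-set points are exactly the $x_i(t)$, and using $\sum_i\ep\delta\psi(\cdot,1)=o_\ep(1)$ (from $\ep N_\ep\delta=o_\ep(1)$ and \eqref{psiinfinity}), I get $H^\ep(t,x)=v_0(x+c_0Lt)+o_\ep(1)$, establishing the upper bound. The lower bound is symmetric: replace $L$ by $-L$ in the ODE (equivalently, take $\dot x_i=+c_0L$), check that the resulting $\tilde H^\ep$ is a subsolution by the same computation, and compare from below.

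The main obstacle is producing the uniform-in-$L$ (and hence uniform-in-time) control of the long-range sum $\sum_{i\ne i_0}\ep b_i/(x-x_i(t))$. This is exactly the content of Lemma~\ref{sumxialwaysfiniteprop}; the monotonicity of $v_0$ is what makes the bound hold with a constant depending only on $\|v_0\|_{C^{1,1}}$, avoiding the $\sigma^{-1/2}$ blow-up that appears in Lemma~\ref{hepsupersolution} in the presence of opposite-oriented dipoles.
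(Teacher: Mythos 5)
Your plan is correct and matches what one expects from the cited reference \cite{patsan} (the paper itself cites that reference for the proof): rigidly translate the $\phi$-ansatz of $v_0$ by $\pm c_0Lt$, show it is a global-in-time super/subsolution using the monotone simplification of Lemma~\ref{hepsupersolution}, and invoke the comparison principle, Proposition~\ref{comparisonuep}. Your key observation is the right one: since all $b_i=1$, there are no collisions, the translated points $x_i(t)=x_i^0\mp c_0Lt$ are level-set points of the rigid translate $v_0(\cdot\pm c_0Lt)$ (same $C^{1,1}$ norm), so Lemma~\ref{sumxialwaysfiniteprop} applies uniformly in $t$ with a constant depending only on $\|v_0\|_{C^{1,1}}$. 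This removes the $\sigma^{-1/2}$ penalty of Lemma~\ref{hepsupersolution} and lets $L$ be chosen once and for all, independent of $\ep$ and $t$.

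One slip worth fixing: your displayed inequality for $\Lambda$ retains a spurious $-L/\alpha$ inside the bracket. After inserting the equation~\eqref{psi} for $\psi$, the contribution $(W''(\tilde\phi(z_{i_0},1))-W''(0))\,L/\alpha$ cancels exactly, and the correct reduction is~\eqref{lastcal} verbatim:
\begin{equation*}
\Lambda(t,x)=\bigl(W''(\tilde\phi(z_{i_0},1))-W''(0)\bigr)\,\frac{1}{\delta}\sum_{i\ne i_0}\tilde\phi(z_i,1)+L+o_\ep(1),
\end{equation*}
with no $-L/\alpha$. You appear to have conflated this with~\eqref{Lambdafinal}, where $L_0/\alpha$ is subtracted precisely because there the sum is shown to be \emph{close to} $L_0/\alpha$, not merely bounded. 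The distinction is not cosmetic: if the $-L/\alpha$ were genuinely present, the worst case would be $\Lambda\ge L-2\|W''\|_\infty\bigl(C+L/\alpha\bigr)$, and since $\alpha=W''(0)\le\|W''\|_\infty$, the coefficient of $L$ could be negative, so taking $L$ large would not give $\Lambda\ge 0$. Your subsequent step---bounding the bracket by a constant $C$ via Lemma~\ref{sumxialwaysfiniteprop} and then choosing $L>C\|W''\|_\infty/\alpha+1$---is only consistent with the correct formula above, so the argument closes once the typo is repaired.
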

Similarly, one can prove 
\begin{lem}\label{supermonotincr2} 
Let $w_0$ be a $C^{1,1}$ non-increasing function and  let $w^\ep$ be the solution of \eqref{uepeq} with $w^\ep(0,x)=w_0(x)$. Then, there exists $L>0$ independent of $\ep$ such that for all $(t,x)\in  (0,+\infty)\times\R$, 
$$w_0(x+c_0Lt)+o_\ep(1)\leq w^\ep(t,x)\leq w_0(x-c_0Lt)+o_\ep(1).
$$
\end{lem}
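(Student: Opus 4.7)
The plan is to mirror the construction used for Lemma \ref{supermonotincr}, with the orientation signs flipped since $w_0$ is non-increasing rather than non-decreasing. Applying the construction \eqref{xi}--\eqref{bidef} to $w_0$ produces level points $x_i^0$, $i=1,\ldots,N_\ep$, all associated to $b_i = -1$; in particular no pair of opposite-oriented particles ever appears.

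For the upper bound, I would set $x_i(t) := x_i^0 + c_0 L t$, which solves the ODE system \eqref{levelsetode} with $b_i = -1$, and define
\begin{equation*}
H^\ep(t,x) := \sum_{i=1}^{N_\ep}\ep\phi\!\left(\frac{x-x_i(t)}{\ep\delta},-1\right) + \sum_{i=1}^{N_\ep}\ep\delta\psi\!\left(\frac{x-x_i(t)}{\ep\delta},-1\right) + \ep M_\ep + \frac{\ep\delta L}{\alpha} + \ep\left\lceil\frac{o_\ep(1)}{\ep}\right\rceil,
\end{equation*}
with $M_\ep := \lceil w_0(-\infty)/\ep\rceil$ and $\psi$ the solution of \eqref{psi}. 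The verification that $H^\ep$ is a supersolution of \eqref{uepeq} follows the same Taylor-expansion bookkeeping as in Lemma \ref{hepsupersolution}. Because all particles move in parallel with speed $c_0 L$, no collisions occur at any future time, so the estimate of $\sum_{i\neq i_0}b_i\ep/(x-x_i(t))$ reduces directly to the uniform bound of Lemma \ref{sumxialwaysfiniteprop} (applied via Remark \ref{importantrem}), thereby avoiding the $\sigma^{-1/2}$ factor forced in Lemma \ref{hepsupersolution} by the proximity of opposite-oriented particles. Consequently $L$ may be chosen large but independent of $\ep$.

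Proposition \ref{approxpropfinal} (combined with Remark \ref{importantrem}) allows one to select the $o_\ep(1)$ correction so that $H^\ep(0,x) \geq w_0(x) = w^\ep(0,x)$. The comparison principle, Proposition \ref{comparisonuep}, then yields $w^\ep(t,x) \leq H^\ep(t,x)$ for all $(t,x)\in(0,+\infty)\times\R$. Since the translated points $x_i(t) = x_i^0 + c_0 L t$ are precisely the $\ep$-level points of the shifted function $x \mapsto w_0(x - c_0 L t)$ with the same orientations $b_i = -1$, a second application of Proposition \ref{approxpropfinal} gives $H^\ep(t,x) = w_0(x - c_0 L t) + o_\ep(1)$, which is the desired upper bound.

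For the lower bound, I would repeat the argument with $\tilde x_i(t) := x_i^0 - c_0 L t$ and a subsolution built by reversing the sign of the $\psi$-correction and of the $\ep\delta L/\alpha$ term (equivalently, by taking $\psi$ to solve \eqref{psi} with $L$ replaced by $-L$); comparison and Proposition \ref{approxpropfinal} then produce $w_0(x + c_0 L t) + o_\ep(1) \leq w^\ep(t,x)$. The only real technical item is the careful tracking of signs in the supersolution computation and in the $\psi$-equation, but the absence of opposite orientations removes the collision analysis entirely and keeps $L$ independent of $\ep$ throughout.
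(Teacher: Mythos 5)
Your proposal is correct and mirrors the argument the paper intends: the paper only asserts that Lemma~\ref{supermonotincr2} follows from the proof of Lemma~\ref{supermonotincr} in \cite{patsan} by symmetry, and your construction is precisely that adaptation. You correctly identify the essential simplification over Lemma~\ref{hepsupersolution} — with all $b_i=-1$ the particles move in parallel and never collide, so the discrete sum is bounded uniformly via Lemma~\ref{sumxialwaysfiniteprop} and Remark~\ref{importantrem}, letting $L$ stay a constant independent of $\ep$ (and $\sigma$).
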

\subsection{Proof of Lemma \ref{asymptou-u+}}
Let $v_2$ and $w_2$ defined as in \eqref{u_02}. Let $v^\ep$ be the solution of \eqref{uepeq} with initial condition $v^\ep(0,x)=v_2(x)$ and let $w^\ep$ be the solution of \eqref{uepeq} with initial condition $w^\ep(0,x)=w_2(x)$. By the comparison principle, $u^\ep(t,x)\leq v^\ep(t,x)$ and $u^\ep(t,x)\leq w^\ep(t,x)$  for all $(t,x)\in(0,+\infty)\times\R$.
The inequality 
$$ u^-(t,x)\leq u^+(t,x)\leq \min\{v_2(x+c_0Lt),w_2(x-c_0Lt)\}$$ then follows 
from Lemma \ref{supermonotincr} and Lemma \ref{supermonotincr2}.
Similarly, one can prove that $$u^-(t,x)\ge \max\{v_1(x+c_0Lt),w_1(x-c_0Lt)\},$$ and this concluded the proof of the lemma.
\subsection{Proof of   Proposition \ref{uasymptoticprop}}
By Theorem \ref{mainthm} we know that $u^-=u^+=\us$ with $\us$ the solution of \eqref{ubareq}. Then, the limits in \eqref{ulimits} immediately follow from  Lemma \ref{asymptou-u+}.
Finally, estimate \eqref{ulesssupinfu0} is a consequence of the comparison principle and the fact that constants are solutions to the equation $\partial_t u=c_0|\partial_x u|\,\I[ u]$.


\section{Appendix} \label{lemmatasec}

\begin{lem}\label{psismall}
There exists $C>0$ independent of $\ep$ and $\rho$ such that, for any $x\in\R$, $$\left|\sum_{i=0}^{K_\rho}\ep \delta\psi\left(\frac{x-\z^i(t)}{\ep \delta},1\right)\right|\leq C\delta.$$
\end{lem}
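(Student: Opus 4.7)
\noindent The plan is to combine the pointwise decay of $\psi$ from Lemma \ref{psiinfinitylem} with the uniform spacing of the level points $\z^i(t)$ inside the box $Q_{2\rho,2\rho}(t_0,x_0)$. Set $z_i(t):=(x-\z^i(t))/(\ep\delta)$ and let $\z^{i_0}(t)$ be the particle closest to $x$. From Lemma \ref{psiinfinitylem} I shall use both the global bound $\sup_{\R}|\psi|\le K_3$ (since $\psi\in C^{1,\beta}(\R)$ with $\lim_{z\to\pm\infty}\psi(z)=0$) and the asymptotic decay $|\psi(z)|\le C/|z|$ for $|z|\ge 1$, which is a direct consequence of \eqref{psiinfinity}.

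\noindent First I would record the spacing of the $\z^i(t)$'s. By \eqref{etaxposirho}, $\eta(t,\cdot)$ is monotone with $\partial_x\eta\ge a:=\partial_x\eta(t_0,x_0)/2>0$ on the interval containing the $\z^i$'s. Applying \eqref{proplippart1} and \eqref{proplippart2} to $v=\eta(t,\cdot)$ yields
\begin{equation*}
\frac{\ep}{L}\le \z^{i+1}(t)-\z^i(t)\le \frac{\ep}{a},
\end{equation*}
where $L$ is the Lipschitz constant of $\eta$. Since $\z^{i_0}(t)$ is the closest point to $x$, it follows that $|x-\z^i(t)|\ge c\ep|i-i_0|$ for every $i\neq i_0$, with $c=1/(2L)$.

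\noindent For the closest particle I bound $\ep\delta|\psi(z_{i_0}(t),1)|\le K_3\ep\delta\le K_3\delta$. For $i\neq i_0$ the spacing estimate forces $|z_i(t)|\ge c|i-i_0|/\delta\ge 1$ once $\delta$ is small enough, so the decay bound $|\psi(z_i(t),1)|\le C\ep\delta/|x-\z^i(t)|$ applies, giving
\begin{equation*}
\sum_{i\neq i_0}\ep\delta\,|\psi(z_i(t),1)|\le \sum_{i\neq i_0}\frac{C\ep^2\delta^2}{|x-\z^i(t)|}\le C\ep\delta^2\sum_{k=1}^{K_\rho}\frac{1}{k}\le C\ep\delta^2\bigl(1+\log K_\rho\bigr).
\end{equation*}
Since \eqref{proplippart1} forces $K_\rho\le C(\rho+R)/\ep\le C/\ep$, we have $\log K_\rho\le C\log(1/\ep)$, so the sum is bounded by $C\ep\delta^2\log(1/\ep)=\delta\cdot C\ep\delta\log(1/\ep)$, which is no larger than $\delta$ for $\ep$ small since $\ep\log(1/\ep)\to 0$ and $\delta$ stays bounded.

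\noindent The only mild obstacle is the logarithmic loss incurred by using only $|\psi|\le C/|z|$: it must be absorbed into the prefactor $\delta$ at the price of taking $\ep$ small. A sharper route (avoiding any smallness condition on $\ep$) would use the refined decomposition $\psi(z)=K_2/z+O(1/z^2)$ from \eqref{psiinfinity}, treating the $K_2/z$ part via the cancellation principle behind Lemma \ref{sumxialwaysfiniteprop} (here every $\z^i$ in the box carries $b_i=1$) and the $O(1/z^2)$ remainder absolutely through \eqref{i/k^2sum}. Either way, putting the two estimates together yields the desired bound $C\delta$ with $C$ depending only on $K_3$, $L$ and $a$, independently of $\ep$ and $\rho$.
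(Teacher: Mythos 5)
Your argument is correct, but it is considerably more involved than necessary, and the paper's proof is much shorter. You correctly observe that $\ep(K_\rho+1)\le C$ (equivalently $K_\rho\le C/\ep$), since by \eqref{mainzetaieq} we have $\ep K_\rho=\eta(t,\z^{K_\rho}(t))-\eta(t,\z^0(t))\le 2\|\eta\|_\infty$; but once you have this, the lemma follows immediately by bounding \emph{every} summand with the global bound $\|\psi\|_\infty\le C$:
\begin{equation*}
\left|\sum_{i=0}^{K_\rho}\ep\delta\,\psi\!\left(\tfrac{x-\z^i(t)}{\ep\delta},1\right)\right|\le\delta\,\|\psi\|_\infty\,\ep(K_\rho+1)\le C\delta,
\end{equation*}
which is exactly the paper's one-line argument. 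Instead you split off the closest particle, invoke the decay $|\psi(z)|\le C/|z|$ for the rest, and estimate a harmonic sum, incurring a spurious $\ep\delta^2\log(1/\ep)$ term that you then have to absorb into $C\delta$ by taking $\ep$ small; this also quietly requires $\delta$ small to ensure $|z_i|\ge1$, and implicitly uses that $\delta$ is bounded, neither of which is needed in the direct argument. The decay estimate \eqref{psiinfinity} and the cancellation route you sketch at the end are the right tools for the harder error terms $E_0,\dots,E_4$ elsewhere in the paper, but they are overkill here: the content of the lemma is just that the \emph{number} of particles times $\ep$ is $O(1)$, and the sup bound on $\psi$ then does all the work.
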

\begin{proof}
Using \eqref{mainzetaieq} and $\|\psi\|_\infty \leq C$ for some $C>0$, we have
\beqs \begin{split}\left|\sum_{i=0}^{K_\rho}\ep \delta\psi\left(\frac{x-\z^i(t)}{\ep \delta},1\right)\right|&\le \delta \|\psi\|_\infty \ep (K_\rho+1)\\
&= \delta \|\psi\|_\infty \ep (K_\rho + J_0 -J_0+1)\\
&=\delta \|\psi\|_\infty (\eta(t,\z^{K_\rho}(t))- \eta(t,\z^0(t))+\ep)\\
&\leq  C\delta.
\end{split}
\eeqs

\end{proof}

\subsection{ Proof of Lemma \ref{spproxetalem1}} 

To prove the lemma, we will show the following claims.

\medskip

\noindent{\em Claim 1: $\left|\sum_{i=0}^{K_\rho}\ep\phi\left(\frac{x-\z^i(t)}{\ep \delta},1\right)+\ep J_0-\eta(t,x)\right|\leq o_\ep(1) + \frac{C\ep^2\delta N_\ep}{R}.$}

{\em Proof of Claim 1.} If $(t,x)\in Q_{B_0R,\rho-R}(t_0,x_0)$, then by Lemma \ref{partcilescontrollem} 
$x\in (\z^0(t)+R, \z^{K_\rho}(t)-R)$. Then, Claim 1 follows from  Lemma \ref{vapproxphisteps} and  the fact that $\eta(t,\z^0(t)) = J_0\ep $.

\medskip

\noindent{\em Claim 2: $\left| \sum_{i=1}^{M_\rho-1}\ep\phi\left(\frac{x-x_i^0}{\ep \delta},b_i\right)-J_0\ep \right| \le 
o_\ep(1) + \frac{C\ep^2\delta N_\ep}{R}.$}

{\em Proof of Claim 2.} By using \eqref{xm}, if $(t,x)\in Q_{B_0R,\rho-R}(t_0,x_0)$, then 
$x>x^0_{M_\rho-1}+R$. Claim 2 then follows from \eqref{vapprowhatremainseq1} and the fact that $\eta(t_0,x^0_1)=\ep$ and   $\eta(t_0,x^0_{M_\rho-1}) = J_0\ep - \ep$.

\medskip
\noindent{\em Claim 3:   $\left |\sum_{i=N_\rho+1}^{N_\ep}\ep\phi\left(\frac{x-x_i^0}{\ep \delta},b_i\right) \right |\le  o_\ep(1) + \frac{C\ep^2\delta N_\ep}{R}.$}

{\em Proof of Claim 3.} By using \eqref{xn}, if $(t,x)\in Q_{B_0R,\rho-R}(t_0,x_0)$, then 
$x<x^0_{N_\rho+1}-R$. Claim 3 then immediately follows from \eqref{vapprowhatremainseq2}.

\medskip

Finally, the lemma is a consequence of Claims 1-3,  Lemma \ref{psismall} and \eqref{Nepbouneta}.


\subsection{ Proof of Lemma \ref{spproxetalem2}} 

We first consider $|x-x_0|>\rho+4R$. Let us assume that $x>x_0+\rho+4R$. One can similarly prove the lemma for $x<x_0-(\rho+4R)$. We divide the proof into three claims as follows. 

\medskip

\noindent{\em Claim 1: $\left|\sum_{i=0}^{K_\rho}\ep\phi\left(\frac{x-\z^i(t)}{\ep \delta},1\right)-\ep K_\rho \right|\leq o_\ep(1) + \dfrac{C\ep^2\delta N_\ep}{R}.$}

{\em Proof of Claim 1.} If $|t-t_0|<B_0R$ and $x>x_0+\rho+4R$, then  by Lemma \ref{partcilescontrollem} 
$x> \z^{K_\rho}(t)+R$.  Therefore, Claim 1 follows immediately by \eqref{vapprowhatremainseq1} and the fact that $\eta(t, \z^{K_\rho}(t))-\eta(t, \z^{0}(t))=\ep K_\rho$.

\medskip

\noindent{\em Claim 2: $\left| \sum_{i=1}^{M_\rho-1}\ep\phi\left(\frac{x-x_i^0}{\ep \delta},b_i\right)-\ep J_0 \right| \le 
o_\ep(1) + \dfrac{C\ep^2\delta N_\ep}{R}.$}

{\em Proof of Claim 2.}  By \eqref{xm}, if  $x>x_0+\rho+4R$, then 
$x>x^0_{M_\rho}+R$. Claim 2 then follows from \eqref{vapprowhatremainseq1} and the fact that $ \eta(t_0, x^0_{M_\rho-1})= J_0\ep -\ep$.

\medskip
\noindent{\em Claim 3:   $\left|\sum_{i=N_\rho+1}^{N_\ep}\ep\phi\left(\frac{x-x_i^0}{\ep \delta},b_i\right)+\ep (K_\rho+J_0)-\eta(t,x)\right|\le  
o_\ep(1) + \dfrac{C\ep^2\delta N_\ep}{R}+O(R).$}

{\em Proof of Claim 3.}  By \eqref{xn}, if  $x>x_0+\rho+4R$ and in addition $x<x^0_{N_\ep}-R$,  then 
$x\in (x^0_{N_\rho}+R, x^0_{N_\ep}-R)$. By Lemma \ref{vapproxphisteps} and the fact that $\eta(t_0,x^0_{N_\rho+1})=\ep (K_\rho+J_0+1)$, we obtain
\beqs
\begin{split}
    &\left|\sum_{i=N_\rho+1}^{N_\ep}\ep\phi\left(\frac{x-x_i^0}{\ep \delta},b_i\right) +\ep (K_\rho+J_0)-\eta(t,x)\right |\\ &\leq \left|\sum_{i=N_\rho+1}^{N_\ep}\ep\phi\left(\frac{x-x_i^0}{\ep \delta},b_i\right) +\ep (K_\rho+J_0)-\eta(t_0,x)\right | + |\eta(t_0,x) - \eta(t,x)|\\
    &\leq o_\ep(1) + \dfrac{C\ep^2\delta N_\ep}{R}+O(R),
\end{split}
\eeqs
using $|\eta(t_0,x) - \eta(t,x)|\leq O(R).$ This proves Claim 3 with $x<x_{N_\ep}^0-R$.

Next, suppose that $x>x_{N_\ep}^0+R$. In this case, we apply \eqref{vapprowhatremainseq1} to obtain
\beqs
\begin{split}
    &\left|\sum_{i=N_\rho+1}^{N_\ep}\ep\phi\left(\frac{x-x_i^0}{\ep \delta},b_i\right) +\ep (K_\rho + J_0)-\eta(t,x)\right |\\ &\leq \left|\sum_{i=N_\rho+1}^{N_\ep}\ep\phi\left(\frac{x-x_i^0}{\ep \delta},b_i\right) +\ep (K_\rho +  J_0)-\eta(t_0,x_{N_\ep}^0)\right | + |\eta(t_0,x_{N_\ep}^0) - \eta(t,x)|\\
    &\leq o_\ep(1) + \dfrac{C\ep^2\delta N_\ep}{R}+O(R),
\end{split}
\eeqs
where, in the last inequality, we used that
\beq \label{eta_reg}
|\eta(t_0,x_{N_\ep}^0) - \eta(t,x)| \leq |\eta(t_0,x_{N_\ep}^0) - \eta(t_0,x)| + |\eta(t_0,x) - \eta(t,x)| \leq \ep + O(R).
\eeq

Finally, suppose $x_{N_\ep}^0 - R \leq x \leq x_{N_\ep}^0+R$. Define $N$ to be an index such that 
\beqs
x^0_{N}\leq x^0_{N_\ep}-2R<x^0_{N+1}\leq x^0_{N_\ep}.
\eeqs
We have 
\beqs\begin{split}
&\left|\sum_{i=N_\rho+1}^{N_\ep}\ep\phi\left(\frac{x-x_i^0}{\ep \delta},b_i\right)+\ep (K_\rho+J_0)-\eta(t,x)\right|\\
&\leq \left | \sum_{i=N_\rho+1}^{N}\ep \phi\left(\frac{x-x_i^0}{\ep\delta},b_i\right) +\ep (K_\rho+J_0)-\eta(t_0,x_{N}^0)\right|\\
&+\left | \sum_{i=N+1}^{N_\ep}\ep \phi\left(\frac{x-x_i^0}{\ep\delta},b_i\right) \right | + o_\ep(1) + O(R).
\end{split}\eeqs
By \eqref{vapprowhatremainseq1} 
\beqs \begin{split}
\left | \sum_{i=N_\rho+1}^{N}\ep \phi\left(\frac{x-x_i^0}{\ep\delta},b_i\right) +\ep (K_\rho+J_0)-\eta(t_0,x_{N}^0)\right|\leq o_\ep(1) + \dfrac{C\ep^2 \delta N_\ep}{R}.
\end{split}
\eeqs 
By using that  $0<\phi< 1$ and that $\{x_{N+1},\ldots, x_{N_\ep}\}\subset (x_{N_\ep}-2R,x_{N_\ep})$ so that $|\{x_{N+1},\ldots, x_{N_\ep}\}|\le CR/\ep$, 
 we have
\beqs \begin{split}
    &\left | \sum_{i=N+1}^{N_\ep}\ep \phi\left(\frac{x-x_i^0}{\ep\delta},b_i\right) \right | \le  O(R).\\
   \end{split}
\eeqs
 This concludes the proof of Claim 3.

The lemma for $|x-x_0|>\rho+4R$ follows as a consequence of Claims 1-3,  Lemma \ref{psismall}  and \eqref{Nepbouneta}.

Finally, let us consider the case $\rho-R\leq |x-x_0|\leq \rho+4R.$ Assume without loss of generality that $\rho-R\le x-x_0\le \rho+4R$. We will divide the proof into three claims.

\medskip
\noindent{\em Claim 4:   $\left|\sum_{i=1}^{M_\rho}\ep\phi\left(\frac{x-x_i^0}{\ep \delta},b_i\right)-J_0\ep \right|\le  
o_\ep(1) + \dfrac{C\ep^2\delta N_\ep}{R}.$}

{\em Proof of Claim 4.} By \eqref{xm} and $x_0+\rho-R<x$, we have that $x>x_{M_\rho}^0 +R$. Therefore, using \eqref{vapprowhatremainseq1}, the claim immediately follows.

\medskip
\noindent{\em Claim 5:   $\left|\sum_{i=N_\rho+1}^{N_\ep}\ep\phi\left(\frac{x-x_i^0}{\ep \delta},b_i\right) \right|\le  
o_\ep(1) + \dfrac{C\ep^2\delta N_\ep}{R}+O(R).$}

{\em Proof of Claim 5.} Define an index $N_1$ such that 
\beqs
x_0+\rho + 5R \leq x^0_{N_1} < x^0_{N\rho}+6R,
\eeqs
so that   $x<x_0+\rho+4R \leq x^0_{N_1}-R$. By using \eqref{vapprowhatremainseq2}, $0<\phi< 1$ and 
$|\{x_{N_\rho+1},\ldots,x_{N_1-1}\}|\leq CR/\ep$, we obtain 
\beqs
\begin{split}
   \left|\sum_{i=N_\rho+1}^{N_\ep}\ep\phi\left(\frac{x-x_i^0}{\ep \delta},b_i\right) \right| 
   &\leq  \left|\sum_{i=N_\rho+1}^{N_1-1}\ep\phi\left(\frac{x-x_i^0}{\ep \delta},b_i\right)\right|
   + \left|\sum_{i=N_1}^{N_\ep}\ep\phi\left(\frac{x-x_i^0}{\ep \delta},b_i\right)\right|\\
      &= O(R) + o_\ep(1) + \dfrac{C\ep^2\delta N_\ep}{R}.
\end{split}
\eeqs
This completes the proof of Claim 5.

\medskip
\noindent{\em Claim 6:   $\left|\sum_{i=0}^{K_\rho}\ep\phi\left(\frac{x-\z^i(t)}{\ep \delta},1\right)+J_0\ep - \eta(t,x) \right|\le  
O(R).$}

{\em Proof of Claim 6.} 
By Lemma \ref{partcilescontrollem}, $|x-\z^0(t)|$, $|x-\z^{K_\rho}(t)|=O(R)$. Then, by using that $0<\phi<1$, $\eta(t,\z^0(t))=\ep J_0$ and $\eta(t,\z^{K_\rho}(t))=\ep (J_0+K_\rho)$, we get
\beqs\begin{split}
 \sum_{i=0}^{K_\rho}\ep\phi\left(\frac{x-\z^i(t)}{\ep \delta},1\right)+J_0\ep - \eta(t,x)&\leq \ep(J_0+K_\rho+1)- \eta(t,x)\\&=\eta(t,\z^{K_\rho}(t))-\eta(t,x)+\ep\\&=O(R),
\end{split}
\eeqs
and 
\beqs\begin{split}
 \sum_{i=0}^{K_\rho}\ep\phi\left(\frac{x-\z^i(t)}{\ep \delta},1\right)+J_0\ep - \eta(t,x)&\geq J_0\ep- \eta(t,x)=\eta(t,\z^{0}(t))-\eta(t,x)=O(R),
\end{split}
\eeqs

which proves the claim. 

The lemma for $\rho-R \leq |x-x_0|\leq \rho+4R$ follows from  Claims 4-6, Lemma \ref{psismall} and \eqref{Nepbouneta}.

\subsection{Proof of Lemma \ref{lastlem}}
Recalling that if $x_i^0<x_0$ then $b_i=-1$, while if $x_i^0>x_0$ then $b_i=1$, we write
\beq\label{lastlemfirsteq}
\begin{split}
&\sum_{i=1}^{N_\ep} \ep \phi \left ( \dfrac{x_\ep +b_ic_0L(t_\ep - t_0  +c\sigma)- x_i^0}{\ep \delta}, b_i \right ) \\&
= \sum_{\stackrel{i=1}{x_i^0>x_0}}^{N_\ep}\ep \phi \left ( \dfrac{x_\ep +c_0L(t_\ep - t_0 + c\sigma)- x_i^0}{\ep \delta}, 1 \right )\\
&+\sum_{\stackrel{i=1}{x_i^0<x_0}}^{N_\ep}\ep \phi \left ( \dfrac{x_\ep -c_0L(t_\ep - t_0 + c\sigma)- x_i^0}{\ep \delta}, -1\right ).
  \end{split}
\eeq
Let us show that 
\beq\label{inewlastlem1}
\sum_{\stackrel{i=1}{x_i^0<x_0}}^{N_\ep}\ep \phi \left ( \dfrac{x_\ep \pm c_0L(t_\ep - t_0 + c\sigma)- x_i^0}{\ep \delta}, -1\right )=
 o_\ep(1)-\ep N_\ep^-, \eeq 
 where $N_\ep^-$ is the number of negative oriented particles.
  By \eqref{therightc}, 
  \beq\label{inewlastlem2}x_\ep \pm c_0L(t_\ep - t_0 + c\sigma)=x_0 \pm c_0Lc\sigma +o_\ep(1)=x_0\pm \frac{\sigma}{4}+o_\ep(1).\eeq
  Since  $\tilde\eta^\sigma$ is constant in $x$ for $|x-x_0|\leq \sigma$, if $x_i^0<x_0$ then  
  $$x_0-x_i^0\geq \sigma,$$ which combined with \eqref{inewlastlem2} gives
  $$x_\ep \pm c_0L(t_\ep - t_0 + c\sigma)- x_i^0\ge \frac{\sigma}{2}.$$
 Therefore by \eqref{phiinfinity}, 
 \beqs
\begin{split}
& \sum_{\stackrel{i=1}{x_i^0<x_0}}^{N_\ep}\ep \phi \left ( \dfrac{x_\ep \pm c_0L(t_\ep - t_0 + c\sigma)- x_i^0}{\ep \delta}, -1\right )\\
&\le\sum_{\stackrel{i=1}{x_i^0<x_0}}^{N_\ep} \frac{\ep^2\delta C}{\sigma}-\ep N_\ep^-\\
&= o_\ep(1)-\ep N_\ep^-,
\end{split}
\eeqs
where  we used that $\ep N_\ep\leq C$. Similarly, one can show that 
\beqs  \sum_{\stackrel{i=1}{x_i^0<x_0}}^{N_\ep}\ep \phi \left ( \dfrac{x_\ep \pm c_0L(t_\ep - t_0 + c\sigma)- x_i^0}{\ep \delta}, -1\right )\geq o_\ep(1)-\ep N_\ep^-.\eeqs
This concludes the proof of \eqref{inewlastlem1}.  From \eqref{inewlastlem1} in particular we infer that 
 \beqs
\begin{split}&\sum_{\stackrel{i=1}{x_i^0<x_0}}^{N_\ep}\ep \phi \left ( \dfrac{x_\ep -c_0L(t_\ep - t_0 + c\sigma)- x_i^0}{\ep \delta}, -1\right )\\
&=
\sum_{\stackrel{i=1}{x_i^0<x_0}}^{N_\ep}\ep \phi \left ( \dfrac{x_\ep +c_0L(t_\ep - t_0 + c\sigma)- x_i^0)}{\ep \delta}, -1\right )+ o_\ep(1),
\end{split}\eeqs
which  combined with \eqref{lastlemfirsteq} yields \eqref{lastlemeq}.

This concludes the proof of the lemma.

\section*{}

\end{document}